\documentclass[12pt]{article}

\usepackage{color}
\usepackage{dsfont}
\usepackage{enumerate}
\usepackage{graphicx,float}
\usepackage{caption}
\usepackage{subcaption}
\usepackage{setspace}
\usepackage{hyperref}
\usepackage{xcolor}
\usepackage{diagbox}
\usepackage{amsmath, amsfonts, amssymb, amsthm, amscd,graphicx}
\usepackage{mathtools} 

\newtheorem{Theorem}{Theorem}[part]
\newtheorem{Definition}{Definition}[part]
\newtheorem{Proposition}{Proposition}[part]
\newtheorem{Assumption}{Assumption}[part]
\newtheorem{Lemma}{Lemma}[part]

\newtheorem{Corollary}{Corollary}[part]
\newtheorem{Remark}{Remark}[part]

\def \I{\mathbb{I}}
\def \N{\mathbb{N}}
\def \R{\mathbb{R}}

\def \E{\mathbb{E}}
\def \F{\mathbb{F}}
\def \G{\mathbb{G}}

\def \P{\mathbb{P}}

\def \D{\mathbb{D}}
\def \S{\mathbb{S}}

\def \1{\mathds{1}}

\def \Ac{\mathcal{A}}
\def \Bc{\mathcal{B}}

\def \Dc{\mathcal{D}}

\def \Fc{\mathcal{F}}
\def \Gc{\mathcal{G}}
\def \Hc{\mathcal{H}}
\def \Ic{\mathcal{I}}

\def \Lc{\mathcal{L}}
\def \Mc{\mathcal{M}}
\def \Nc{\mathcal{N}}

\def \Pc{\mathcal{P}}

\def \Rc{\mathcal{R}}

\def \Tc{\mathcal{T}}
\def \Uc{\mathcal{U}}
\def \Vc{\mathcal{V}}

\def \Yc{\mathcal{Y}}

\def \d{\textrm{d}}

\def \ni{\noindent}
\def \eps{\varepsilon}

\def\beqs{\begin{eqnarray*}}
\def\enqs{\end{eqnarray*}}
\def\beq{\begin{eqnarray}}
\def\enq{\end{eqnarray}}

\addtolength{\oddsidemargin}{-0.1 \textwidth}
\addtolength{\textwidth}{0.2 \textwidth}
\addtolength{\topmargin}{-0.1 \textheight}
\addtolength{\textheight}{0.2 \textheight}

\providecommand{\keywords}[1]
{
  \small
  \textbf{Keywords---} #1
}

 \title{A Stochastic Target Problem for Branching Diffusions}

 \author{ Idris Kharroubi\footnote{Research of the author partially supported by ANR grant RELISCOP.}\\ LPSM, UMR CNRS 8001,\\ Sorbonne Universit\'e
  and Universit\'e Paris Cit\'e,
  \\ idris.kharroubi @ sorbonne-universite.fr
  \and Antonio Ocello\\  LPSM, UMR CNRS 8001,\\ Sorbonne Universit\'e
  and Universit\'e Paris Cit\'e,
  \\ antonio.ocello @ sorbonne-universite.fr
  }

             \date{\today}

\setlength {\marginparwidth }{2cm} 

\begin{document}

\maketitle

\begin{abstract}
We consider an optimal stochastic target problem for branching diffusion processes. This problem consists in finding the minimal condition for which a control allows the underlying branching process to reach a target set at a finite terminal time for each of  its branches. This problem is motivated by an example from fintech where we look for the super-replication price of options on blockchain based cryptocurrencies. We first state a dynamic programming principle for the value function of the stochastic target problem. We then show that the value function can be reduced to a new function with a finite dimensional argument by a so called branching property. Under wide conditions, this last function is shown to be the unique viscosity solution to an HJB variational inequality.
\end{abstract}

\noindent \textbf{MSC Classification- }35K10, 49L20, 49L25, 	60J80,  91G20

\noindent \keywords{Stochastic target control, fintech, cryptocurrencies options, branching diffusion process, dynamic programming principle, Hamilton-Jacobi-Bellman equation, viscosity solution.}


\section{Introduction}

The theory of optimal stochastic control has been extensively developed since the pioneering works in the 1950 decade. One reason for the growing attraction of this theory is  the variety of its applications, such as physics, biology, economics or finance.

In the last field, stochastic control theory appears to be a very natural tool as it provides solutions to the optimal portfolio choice issue. The need to control risks related to financial investments leads to  new stochastic optimization problems. Here, one looks for the minimal initial endowment needed to find a financial strategy whose final position satisfies some given constraints. Such optimization problems are called optimal stochastic target problem and have been widely studied (see e.g. \cite{SonerTouzi, SonerTouziSIAM, bouchard02, BouchardElieTouzi, bouchard2018quenched}).

The classical stochastic control theory has also been developed for other kind of stochastic processes such as branching diffusions. Those processes describe the evolution of a population of individuals with similar features concerning their dynamics and their reproduction. Branching processes have been first studied by Skorohod \cite{Sk64} and Ikeda et al. \cite{INW681,INW682,INW69}, who provided Feynmann-Kac presentation of solution to parabolic semi-linear partial differential equations (PDE for short). Since those pioneering works, branching processes have been extensively studied
in particular their scaling limits and the link with superprocesses (see \cite{Dawson}).
Recently, they were also used by Henry-Labord\`ere et al. \cite{HLTT} for Monte Carlo based numerical approximation of solutions to semilinear parabolic PDEs.

In the case where the branching processes are controlled, \"Ust\"unel \cite{ustunel} considers a finite horizon optimization problem. He restricted to Markov controls acting only on the drift coefficient. Following a martingale problem approach, he proved existence of optimal controls under wide conditions.  Nisio \cite{Nisio} considers the case where both the drift and diffusion coefficients are controlled.  She characterizes the related value function as a viscosity solution to a nonlinear parabolic PDE of HJB type. Then, Claisse \cite{claisse18} extends the previous results by allowing controls that may not preserve independance of the particles and  considering the lifespan and the progeny coefficients to depend on the position and the control.
Following the approach of Fleming and Soner \cite{FS} which relies on a result due to Krylov \cite{Krylov87}, the value function is approximated by a sequence of smooth value functions corresponding to small perturbations of the initial problem. This is what allows to prove a dynamic programming principle (DPP for short) and to derive a related dynamic programming equation.

In this paper, we investigate a stochastic target problem where the underlying controlled process is a branching diffusion. The problem consists in finding a minimal initial condition for a given target branching diffusion such that it dominates a function of another controlled branching diffusion for each  particle {alive at a given terminal time}.

We then give an extended equivalent formulation of the problem. Indeed, as the starting condition of the target branching process may contain several points, the previous problem is not well posed. We therefore look for the minimal value dominating all starting points such that the related branching process satisfies the terminal constraint.


  Such a problem finds an application in mathematical finance, when dealing with the optimal investment on crypto-currencies. For these assets, branching may appear due to their structure, leading to new assets (see e.g. \cite{Saleh2021}). In this framework, the super-replication issue remains to the best of our knowledge unsolved. Our setting provides a possible solution and we give a detailed example as an illustration.

We adopt a DPP approach  to  characterize the value function of our branching stochastic target problem.
Contrary to \cite{claisse18}, our argument do not rely on the existence of regular solution to approximated PDEs but on probabilistic results. 
We use a measurable selection theorem similar to that of \cite{SonerTouzi}. Combining it with a conditioning property for the law of the controlled process, we get the DPP.

We use it to identify the value function as a solution to a dynamic programming PDE.
We first show, as in \cite{claisse18}, a branching property. It
relates the value function at a given starting condition to the optimal values {at} its points. This allows to see the value function as a sequence of functions from $[0,T]\times\R^d$ to $\R$ indexed by the (countable) set of particle labels.
Contrary to the classical branching property, ours writes the value function as a maximum instead of a product. Hence, it entails irregularity bringing us out of the range of regular solutions.

We therefore adopt the framework of viscosity solutions. 
{ The dependence in the label variable leads to adapt the definition of viscosity solutions and to impose a continuous bound in the label.}
%
Using the DPP, the value function is 
{shown to be} a viscosity solution to a partial differential inequality of two terms. The first one is the classical nonlinear second order operator for classical diffusion processes, written as a supremum of a linear operator over controls that kill the diffusive part.  The restriction to these controls is due to the terminal constraint, imposed with probability one (see \cite{BouchardElieTouzi}). The second term expresses a monotonicity with respect to the label. More precisely, the value function taken at some label must be greater than its value on any other offspring label.
Surprisingly, our PDE do not contain any polynomial of the value function function as we  classically have in PDEs related to branching processes. This is due to the specific structure of the  considered control problem. We complete this parabolic PDE property by a terminal condition.

To get a {full} characterization of our value function, we finally consider the uniqueness to the PDE.   Under additional assumptions, we prove a comparison theorem using the classical approach of doubling variable combined with Ishii's lemma. This shows that the value function is the unique viscosity solution to the PDE. As a byproduct we get the continuity of the value function on the parabolic interior of the domain.

The remainder of the paper is organized as follows. In Section \ref{sec2}, we present the branching stochastic target problem and provide an example of application inspired from fintech. In Section \ref{sec3}, we set the dynamic programming principle. We finally show in Section \ref{sec4} viscosity properties of the value function and provide a uniqueness result to the related PDE. Finally we relegate some technical results needed in the proof of the conditionning property to the appendix.

\section{The problem}\label{sec2}
\subsection{Branching diffusions}\label{subsecdef}
We start by a description of the underling controlled processes. As those processes are of branching type, we first introduce the label set.

\paragraph{Label set}
For $n\geq 1$, a multi-integer $i=(i_1,\ldots,i_n)\in\N^n$ is simply denoted by $i=i_1\ldots i_n$.  For $n,m\geq 1$ and two multi-integers $i=i_1\ldots i_n\in \N^n$ and $j=j_1\ldots j_m\in \N^m$, we define their concatenation $i j\in\N^{n+m}$ by
\beq\label{concatenation}
i j & = & i_1\ldots i_n j_1\ldots j_m\;.
\enq
To describe the evolution of the particle population, we  introduce the set of labels $\Ic$ defined by
\beqs
\Ic & = & \{\varnothing\}\cup\bigcup_{n=1}^{+\infty}\N^n\;.
\enqs
The label $\varnothing$ corresponds to the mother particle. We extend the concatenation \eqref{concatenation} to the whole set $\Ic$ by
\beqs
\varnothing i & = & i \varnothing~~=~~i
\enqs
for all $i\in\Ic$. When the  particle labelled $i=i_1\ldots i_n\in \N^n$ gives birth to $k$ particles, the off-springs are labelled $i0,\ldots,i(k-1)$.
We also define the partial ordering relation $\preceq$ (resp. $\prec$) by
\beqs
j\preceq i & \Leftrightarrow & \exists \ell\in\Ic~:~i=j\ell\\
(\textrm{resp.}~j\prec i & \Leftrightarrow & \exists \ell\in\Ic\setminus \{\varnothing\}~:~i=j\ell)
\enqs
for all $i,j\in\Ic$.
We introduce the distance $d_\Ic$ on $\Ic$ defined by
\beqs
d^\Ic(i,j) = \sum_{\ell = p+1}^n (i_\ell +1) + \sum_{\ell' = p+1}^m (j_{\ell'} +1)\;,
\enqs
for $i=i_1\cdots i_{n}\in\N^n, j=j_1\cdots j_{m} \in \N^m$, with
\beqs
p & = & \max\{\ell\geq1~:~i_\ell=j_\ell\}\;.
\enqs
We next write
$|i|:= d^\Ic(i,\emptyset)$ for $i\in\Ic$.

\paragraph{Set of finite measures} In the sequel we shall consider finite measure on $\Ic\times \R^\ell$ for $\ell\geq 1$. For that, we endow the set $\Ic \times \R^{\ell}$  with the metric $d$ defined by
\beqs
d\left((i,x),(j,y)\right) & = & d_\Ic(i,j)+|x-y|\;,\qquad i,j\in \Ic\;, x,y\in \R^{\ell}\;.
\enqs
$\Ic \times \R^{\ell}$ is then separable and complete.
We denote by $\mathcal{M}_F(\Ic \times \R^\ell)$ set of the set of finite measures on  $\Ic \times \mathbb{R}^\ell$.  From Lemma 4.5 \cite{book:KALLENBERG-RM},  $\mathcal{M}_F(\Ic \times \R^{\ell})$ endowed with the topology of the weak convergence is Polish.
We recall that we say that a sequence $(\nu_n)_{n\geq0}$ weakly converges to $\nu$ in $\mathcal{M}_F(\Ic \times \R^{\ell})$ if $\int fd\nu_n\rightarrow \int fd\nu$ as $n\rightarrow+\infty$ for any continuous and bounded function $f$ from $\Ic \times \R^\ell$ to $\R$.
A possible metric associated to the weak topology on $\Mc_F(\Ic \times \R^\ell)$ is the Prokhorov metric (see Lemma 4.3 in \cite{book:KALLENBERG-RM}). We next define the subset $E_\ell$ of $\Mc_F(\Ic\times\R^\ell)$ by
\beq\label{defEell}
E_\ell & = & \left\{ \sum_{i\in V}\delta_{(i,x)}\;;~V\subseteq \Ic\;,~V\mbox{ finite}\;,~x^i\in\R^\ell \mbox{ and }i\nprec j \mbox{ for }i,j\in V \right\}\;.
\enq
By Proposition \ref{PropEclosed}, $E_d $ is Polish as well.

\paragraph{Probabilistic setting}
We fix a deterministic terminal time $T>0$ and a filtered probability space $(\Omega,\Fc,\bar \F=(\Fc_t)_{t\in[0,T]},\P)$ satisfying the usual conditions. Suppose that this probability space is endowed with a family of processes $(B^i,Q^i)_{i\in\Ic}$ such that
\begin{itemize}
\item $(B^i_t)_{t\in[0,T]}$ is an $ \F$-standard Brownian motion in $\R^m$ for all $i\in\Ic$;
\item $Q^i(\d t,\d k)$ is an $ \F$-Poisson random measure on $[0,T]\times\N$ with intensity measure $\d t\, \gamma p_k\delta_k$ for all $i\in\Ic$, with $\gamma>0$, $p_k\geq0$ for $k\geq0$ and $\sum_{k\geq 0}p_k=1$, $\delta_k$ being the Dirac measure at $k$;
\item $\{B^i, Q^j\;, i,j\in\Ic\}$ forms a family of mutually independent processes.
\end{itemize}
Having in mind these processes, we precise a better probability space.
\begin{itemize}
    \item Let $\Omega^0$ be the space of continuous functions from $[0,T]$ that are $\R^m$-valued starting at $0$. Let ${\mathbb{F}}^0:= ({\mathcal{F}}_t^0)_{t\in[0,T]}$ the filtration generated by the canonical process $B(\omega^0):=\omega^0,\ \omega^0\in\Omega^0$. We endow $(\Omega^0,{\mathcal{F}}^0_T)$ with the Wiener measure $\mathbb{P}^0$.

    \item Let $\Omega^1$ be the set of measures $\omega^1$ on $\R_+ \times \N$ of the form $\omega^1=\sum_{k\geq0}\delta_{(t_k,n_k)}$.
    Let ${\mathbb{F}}^1 := ({\mathcal{F}}_t^1)_{t\in[0,T]}$ be the filtration generated by the canonical process $Q(\omega^1)=\omega^1$:
    \beqs
    {\mathcal{F}}_t^1 : =\sigma\left(Q([0,s]\times \{k\})\ : s \in [0,t], \ k \in \N\right)\;,\quad t\in[0,T]\;.
    \enqs
We endow $(\Omega^1,{\mathcal{F}}^1_T)$ with the Poisson measure $\mathbb{P}^1$ of intensity $\d t\, \gamma\sum_{k\geq0}p_k\delta_k$, that is the probability measure such that $Q$ is a Poisson point process with intensity $\d t\, \gamma\sum_{k\geq0}p_k\delta_k$.
\end{itemize}

Following the structure we expect for $\{B^i, Q^j\;, i,j\in\Ic\}$, we define the filtered space $(\Omega, \mathcal{F}, {\mathbb{F}},\mathbb{P})$, where $\Omega = (\Omega^0\times\Omega^1)^{\Ic}$, $\mathbb{P} = (\mathbb{P}^0\otimes\mathbb{P}^1)^{\otimes\Ic}$, $\mathcal{F}$ is the $\P$-augmentation of $({\mathcal{F}}_T^0\otimes{\mathcal{F}}_T^1)^{\otimes\Ic}$ and ${\mathbb{F}} = ({\Fc}_t)_{t\in [0,T]}$ is the $\mathbb{P}$-augmentation of the filtration $(({\Fc}_t^0\otimes{\Fc}_t^1)^{\otimes\Ic})_{t\in [0,T]}$. On this space we extend the definition of the processes $B^i$ and $Q^i$ for $i\in \Ic$ as the previously described processes $B$ and $Q$ composed with the projections on each component, $i.e.$
\beqs
B^i(\omega):=\omega^{0,i}, \ Q^j(\omega):=\omega^{1,j},\quad \omega = (\omega^{0,i}, \omega^{1,i})_{i \in \Ic} \in \Omega\ .
\enqs
We  also define the process $\xi$ valued in $\mathcal{M}_F(\Ic \times \N \times \R^{m+1})$ by
\beq\label{defXi}
\xi_t & = & \sum_{i\in\Ic,n\in\N}\frac{1}{2^{2(|i|+n)}}\delta_{(i,n,B^i_t,Q^i([0,t]\times\{n\}))}
\enq
for $t\in[0,T]$.  We then notice that the filtration $ \F$ is the completed filtration generated by the process $\xi$.

To stress the dependence in time, we will use the following notations. For $t\in[0,T]$ and $\omega=(\omega^0,\omega^1)\in\Omega$, we define the stopped path at time $t$ by $\omega_{.\wedge t}=(\omega^0_{.\wedge t},\omega^1_{.\wedge t})$ where
\beqs
\omega^0_{.\wedge t} ~=~ (\omega^0_{s\wedge t})_{s\geq0} & \mbox{ and } & \omega^1_{.\wedge t} ~=~ \omega^1(\cdot\cap [0,t] \times\N)\;.
\enqs
For  a process $(X_t)_{t\in[0,T]}$ and a random time $\tau:~\Omega\rightarrow[0,T]$, we denote by $(X_{t\wedge\tau})_{t\in[0,T]}$ the process defined by
\beqs
X_{t\wedge\tau}(\omega) & = & X_t(\omega_{.\wedge \tau(\omega)})\;,\quad t\in[0,T],\; \omega\in\Omega\;.
\enqs
For $ \omega,\; \tilde \omega\in\Omega$ and a random time $\tau:~\Omega\rightarrow[0,T]$, we define the concatenation path $\omega\oplus_\tau\tilde \omega=(\omega^{0,i}\oplus_\tau\tilde\omega^{0,i},\omega^{1,i}\oplus_\tau\tilde \omega^{1,i})_{i\in\Ic}$ by
\beqs
(\omega^{0,i}\oplus_\tau\tilde\omega^{0,i})_s  & = &  \omega^{0,i}_{s}\1_{s< \tau(\omega)} +(\tilde\omega^{0,i}_s-\tilde\omega^{0,i}_{\tau(\omega)}+\omega^{0,i}_{\tau(\omega)})\1_{s\geq \tau(\omega)}\;,\quad s\in[0,T]\;,
\enqs
and
\beqs
 \omega^{1,i}\oplus_{\tau(\omega)}\tilde \omega^{1,i} & = & \omega^{1,i}(\cdot\cap [0,\tau(\omega)] \times\N)+\tilde \omega^{1,i}(\cdot\cap (\tau(\omega),T] \times\N)\;.
\enqs
for $i\in\Ic$. For a random variable  $S$ valued in some Polish space, we also define the shifted random variable $S^{\tau,\omega}$ by
\beq\label{def v.a. cond}
S^{\tau,\omega}(\tilde \omega) & = & S(\omega\oplus_{\tau}\tilde \omega)\;,\quad \tilde \omega\in \Omega\;.
\enq

\paragraph{Alive particles}
We define the set $\Vc_t$ of alive particles at time $t$ as follows.
\begin{itemize}
\item At time $t=0$, the set is reduced to the mother particle : $\Vc_0=\{\varnothing\}$.
\item For a time $t\geq0$, a particle $i\in \Vc_t$ dies at the first time $\tau_i> t$ the related Poisson measure $Q^i$ jumps after $t$, $i.e.$
\beqs
\tau_i & = & \inf\{s> t~:~Q^i((t,s]\times\N)=1\}\;.
\enqs
\item At time $\tau_i$, this particle gives birth to $k$ particles $i0,\ldots,i(k-1)$, with $k$ such that $Q^i(\{\tau_i\}\times \{k\})=1$:
\beqs
\Vc_{\tau_i} & = & \left(\Vc_{\tau_i-}\setminus\{i\}\right)\cup\{i0,\ldots,i(k-1)\} \;.
\enqs

\end{itemize}

\paragraph{Controlled population} Take $A$ a Polish space with metric $d_A$. We assume $d_A$ to be bounded (if not so, we replace $d_A$ by $d_A\wedge 1$ and still have a Polish space). We define a control $\alpha$ as a family $(\alpha^i)_{i\in\Ic}$ of $\F$-progressively measurable processes valued in $A$. We denote by $\Ac$ the set of such controls.

Let $\lambda:~\R^d\times A\rightarrow\R^d$ and $\sigma:~\R^d\times A\rightarrow\R^{d\times m}$ be measurable functions. For a given control $\alpha\in \Ac$, each particle $i\in\Ic$ of the controlled population is born, evolves and dies to give birth to off-springs according to the set $\Vc$ defined above. We denote by $X^i_s$ the position  at time $s$ of a particle $i\in \Vc_s$.
For $ i\in \Ic$ alive at time $t$, let $\tau_i\geq t$ be the random time of its death, giving birth to $k$ off-springs. The position at a time $s\geq \tau_i$ of the off-springs $i0,\ldots,i(k-1)$ are given by
\beq
X^{i\ell}_{\tau_i} & = & X^{i}_{\tau_i}\label{def-dynX1}\\
\d X ^{i\ell}_s & = & \lambda(X_s^{i\ell},\alpha^{i\ell}_s)\d s+\sigma(X_s^{i\ell},\alpha^{i\ell}_s)\d B^{i\ell}_s\;,\label{def-dynX2}
\enq
for $\ell=0,\ldots,k-1$, such that $i\ell$ is alive at time $s$. We  represent the population of alive particles by the following measure valued process
\beqs
    Z_s & = & \sum_{i\in \Vc_s}\delta_{(i,X^i_s)}\;,\quad s\geq 0\;.
\enqs
The process $Z$ takes values in the Polish space $E_d$ defined by \eqref{defEell}.

\vspace{2mm}

For a function $f:\Ic\times \R^d\to \R$, and a measure $\mu=\sum_{i\in V}\delta_{(i,x_i)}\in E_d$, we set
\beqs
f(\mu) & = & \int_{\Ic\times \R^d}fd\mu~~=~~\sum_{i\in V}f_i(x_i)\;.
\enqs
We introduce the second order local operators $L^a$, $a\in A$ defined by
\beqs
L^a\varphi(x) & = & \lambda(x,a)^\top D\varphi(x) +\frac{1}{2} \textrm{Tr}\left(\sigma\sigma^\top(x,a)D^2\varphi(x)\right)\;,\quad x\in \R^d,
\enqs
for $\varphi\in C^2(\R^d)$, where $D\varphi$ and $D^2\varphi$ denote respectively, the gradient and the Hessian matrix of $\varphi$.

For a control $\alpha\in \Ac$ and a function $f: ~[0,T]\times\Ic\times \R^d\rightarrow\R$ such that $f_i(\cdot)\in C^{1,2}([0,T]\times\R^d)$ for all $i\in \Ic$, the following SDE characterises the behaviour of $Z$:
\beq\nonumber
f(t,Z_t) & = & f(s,Z_s)+\int_s^t\sum_{i\in V_u}D f_i(u,X^i_u)^\top\sigma(X^i_u,\alpha^i_u)\d B^i_u\\\label{dynZ}
 & & +\int_s^t\sum_{i\in V_u}(\partial_t+L^{\alpha^i_u}) f_i(u,X^i_u)\d u\\\nonumber
  & & + \int_{(s,t]\times\N}\sum_{i\in V_{u-}}\sum_{k\geq0}\left( \sum_{\ell=0}^{k-1} f_{i\ell}-f_i\right)(u,X^i_u)
  Q^i(\d u\d k)
\enq
for all $s,t\in[0,T]$ such that $s\leq t$.
\paragraph{Target branching diffusion} To each alive particle $i\in \Vc_s$, we associate a target position at time $s$ denoted by $Y^i_s$. Let $\lambda_{Y}:\R^d \times \R \times A \to \R$ and $\sigma_{Y}: \R^d \times A \to \R^{1 \times m}$ be measurable functions.  Let $\tau_i \geq t$ be the random time of death of $i\in\Ic$, the target position at time $s\geq \tau_i$ is given by
\beq
Y^{i\ell}_{\tau_i} & = & Y^{i}_{\tau_i}\label{def-dynY1}\\
dY ^{i\ell}_s & = & \lambda_{Y}(X^{i\ell,\alpha}_s, Y^{i\ell,\alpha}_s,\alpha_s^{i\ell})\, \d s + \sigma_{Y}(X^{i\ell,\alpha}_s,\alpha_s^{i\ell})\,
\d B^{i\ell}_s \: , \label{def-dynY2}
\enq
for $\ell=0,\ldots,k-1$, such that particle $i\ell$ is alive at time $s$.

We use the notation $\hat{\cdot}$ to define the quantities associated to the pair $\left(\begin{smallmatrix}
  X^i_s\\
  Y^i_s
\end{smallmatrix}\right)$, considering the previous problem but on $\R^{d+1}$. Therefore, we have $\hat{X}_s^i := \left(\begin{smallmatrix}
  X^i_s\\
  Y^i_s
\end{smallmatrix}\right)$, $\hat{\lambda}(\hat{X}_s^i,\alpha^i_s) := \left(\begin{smallmatrix}
  \lambda(X_s^i,\alpha^i_s)\\
  \lambda_{Y}(X^{i}_s, Y^{i}_s,\alpha_s)
\end{smallmatrix}\right)$ and $\hat{\sigma}(\hat{X}_s^i,\alpha^i_s) := \left(\begin{smallmatrix}
  \sigma(X_s^i,\alpha^i_s)\\
  \sigma_{Y}(X^{i}_s, \alpha_s)
\end{smallmatrix}\right)$. Under those hypotheses, assuming $i$ is alive, its position $\hat{X}^i$ evolves according to
\beq
d\hat{X}_s^i
& = & \hat{\lambda}(\hat{X}_s^i,\alpha^i_s) \d s + \hat{\sigma}(\hat{X}_s^i,\alpha^i_s)\,
\d B^i_s \: .
\enq
The resulting population process valued in $E_{d+1}$ is
\beqs
\hat Z_t & = & \sum_{i\in \Vc_s}\delta_{(i,X^i_s,Y^i_s)}\;,\quad s\geq 0\;.
\enqs

As before, we define the related second order local operators $\hat L^a$, $a\in A$ by
\beqs
\hat L^a\hat \varphi( \hat x) & = & \hat \lambda(\hat x,a)^\top D \hat \varphi(\hat x) +\frac{1}{2} \textrm{Tr}(\hat \sigma\hat \sigma^\top(\hat x,a)D^2\hat \varphi(\hat x))\;,\quad \hat x\in \R^{d+1},
\enqs
for $\hat \varphi\in C^2(\R^{d+1})$, where $D\hat \varphi$ and $D^2\hat \varphi$ denote respectively, the gradient and the Hessian matrix of $\hat \varphi$.

For a control $\alpha\in \Ac$ and a function $\hat f: ~[0,T]\times\Ic\times \R^{d+1}\rightarrow\R$ such that $\hat f_i(\cdot)\in C^{1,2}([0,T]\times\R^{d+1})$ for all $i\in \Ic$, the SDE related to  $\hat Z$ takes the following form:
\beq\nonumber
\hat f(t,\hat Z_t) & = & \hat f(s,\hat Z_s)+\int_s^t\sum_{i\in V_u}D\hat  f_i(u,\hat X^i_u)^\top\hat \sigma(\hat X^i_u,\alpha^i_u)\d B^i_u\\\label{dynhatZ}
 & & +\int_s^t\sum_{i\in V_u}(\partial_t+\hat L^{\alpha^i_u})\hat  f_i(u,\hat X^i_u)\d u\\\nonumber
  & & + \int_{(s,t]\times\N}\sum_{i\in V_{u-}}\sum_{k\geq0}\left( \sum_{\ell=0}^{k-1}\hat  f_{i\ell}-\hat f_i\right)(u,\hat X^i_u)
  Q^i(\d u\d k)
\enq
for all $s,t\in[0,T]$ such that $s\leq t$.

\paragraph{Well posedness}  To ensure the well definition of the presented  controlled processes, we make the following assumption.

\begin{Assumption}\label{Assumption_H_0}
\begin{enumerate}[(i)]
\item The coefficients $p_k$, $k\geq 0$, satisfy
\beqs
\sum_{k\geq0}kp_k~=~M  & < & +\infty\;.
\enqs
\item The functions  $\lambda$, $\sigma$, $\lambda_{Y}$ and $\sigma_{Y}$ satisfy
\beqs
\sup_{a\in A}|\lambda(0,a)|+|\sigma(0,a)| +|\lambda_{Y}(0,0,a)|+|\sigma_{Y}(0,a)| < +\infty~.
\enqs
\item There exists a constant $L > 0$ such
that
\beqs
|\lambda(x,a)-\lambda(x',a)|+|\sigma(x,a)-\sigma(x',a)| & & \\
+|\lambda_{Y}(x,y,a)-\lambda_{Y}(x',y',a')|+|\sigma_{Y}(x,a)-\sigma_{Y}(x',a)| & \leq & L\left(|x-x'|+|y-y'|\right)
\enqs
for all $x,x'\in\R^d$, $y,y'\in\R$ and $a\in A$.
\item There exists a nondecreasing  function $w:~\R_+\rightarrow\R_+$ such that  $w(r)\xrightarrow[r\rightarrow0]{} 0$ and
\beqs
|\lambda(x,a)-\lambda(x,a')|+|\sigma(x,a)-\sigma(x,a')| & & \\
+|\lambda_{Y}(x,y,a)-\lambda_{Y}(x,a')|+|\sigma_{Y}(x,a)-\sigma_{Y}(x,a')| & \leq & w(d_A(a,a'))
\enqs
for all $x\in\R^d$, $y\in\R$ and $a,a'\in A$.

\end{enumerate}
\end{Assumption}

\vspace{2mm}
For any initial condition $t\in[0,T]$, $\mu=\sum_{i\in V}\delta_{(i,x_i)}\in E_d$ and $y_i\in\R$ for $i\in V$, we extend the controlled branching processes $(X,Y)$. For that the set of alive particles $\Vc^{t,\mu}$ is defined as follows.
\begin{itemize}
\item For $s\in[0,t]$, $\Vc^{t,\mu}_s=V$.
\item For $s\geq t$, a particle $i\in \Vc_s$ dies at the first time $\tau_i> s$ the related Poisson measure $Q^i$ jumps after $s$:
\beqs
\tau_i & = & \inf\{r> s~:~Q^i(]s,r]\times\N)=1\}\;.
\enqs
\item At time $\tau_i$, the particle $i$ gives birth to $k$ particles $i0,\ldots,i(k-1)$, with $k$ such that $Q^i(\{\tau_i\}\times \{k\})=1$:
\beqs
\Vc^{t,\mu}_{\tau_i} & = & \left(\Vc^{t,\mu}_{\tau_i-}\setminus\{i\}\right)\cup\{i0,\ldots,i(k-1)\} \;.
\enqs
\end{itemize}
Then, the controlled branching population process $X^{t,\mu,\alpha}=(X^{t,\mu,\alpha,i}_s, i\in \Vc_s^{t,\mu})_{s\in[0,T]}$ is defined by the initial condition
\beqs
X^{t,\mu,\alpha}_s & = & (x_i,\;i\in V)\;,\quad s\in[0,t]\;,
\enqs
together with dynamics \eqref{def-dynX1}-\eqref{def-dynX2}. We also denote by $\hat \mu\in E_{d+1}$ the extended measure as
\beqs
\hat \mu & = & \sum_{i\in V}\delta_{(i,x_i,y_i)}\;,
\enqs
and $Y^{t,\hat \mu,\alpha}=(Y^{t,\hat \mu,\alpha,i}_s, i\in \Vc_s^{t,\mu})_{s\in[0,T]}$ the controlled branching target process with initial condition
\beqs
Y^{t,\hat \mu,\alpha,i}_s & = & y_i\;,\quad s\in[0,t]\;,
\enqs
for all $i\in V$, together with dynamics \eqref{def-dynY1}-\eqref{def-dynY2}.
Let $Z^{t,\mu,\alpha}$  and $\hat Z^{t,\hat \mu,\alpha}$ be
\beqs
Z^{t,\mu,\alpha}_s = \sum_{i\in \Vc^{t,\mu}_s}\delta_{(i,X^{t,\mu,\alpha,i}_s)} & \mbox{ and }
 & \hat Z^{t,\hat \mu,\alpha}_s = \sum_{i\in \Vc^{t,\mu}_s}\delta_{(i,X^{t,\mu,\alpha,i}_s,Y^{t,\hat \mu,\alpha,i}_s)}
\enqs
for $s\in[0,T]$.

In this setting, we have the following non-explosion result.

\begin{Proposition}\label{well-pose-branch-pop}
Suppose that Assumptions \ref{Assumption_H_0} (i)-(ii)-(iii) hold. Fix $t\in[0,T]$, $\mu=\sum_{i\in V}\delta_{(i,x_i)}\in E_d$, $\hat \mu=\sum_{i\in V}\delta_{(i,x_i,y_i)}\in E_{d+1}$ and $\alpha\in \Ac$.

\vspace{2mm}

\noindent (i) The set of alive particles $\Vc^{t,\mu}_s$ is uniquely defined and is finite for all $s\in[0,T]$. More precisely, we have
\beqs
\E\left[\sup_{s\in[0,T]}|\Vc_s^{t,\mu}|\right] & \leq & |V|e^{\gamma M(T-t)}
\enqs
where $|\Vc|$ stands for the cardinal of a subset $\Vc$ of $\Ic$.

\noindent (ii)  There exists a unique $\F$-adapted process $( Z^{t,\mu,\alpha})$ (resp. $(\hat Z^{t,\hat \mu,\alpha})$) valued in $E_{d} $ (resp. $E_{d+1} $). Moreover, the process   $Z^{t,\mu,\alpha}$ (resp. $(\hat Z^{t,\hat \mu,\alpha})$) satisfies \eqref{dynZ} (resp. \eqref{dynhatZ}).
\end{Proposition}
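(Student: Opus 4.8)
The plan is to prove the two assertions essentially by induction on the successive birth times of the population, reducing at each step to the classical well-posedness theory for SDEs with Lipschitz coefficients. First I would set up the genealogy more explicitly: under Assumption \ref{Assumption_H_0}(i), for a single particle the death time $\tau_i$ is exponential with parameter $\gamma$ and the number of offspring has mean $M$, both independent of the control and of the Brownian motions. Introduce the branching times $0=:T_0< T_1< T_2<\cdots$ obtained by ordering all the jump times of all the Poisson measures $Q^i$, $i\in\Ic$, restricted to particles that are actually alive; on each interval $[T_n,T_{n+1})$ the set of alive particles $\Vc^{t,\mu}_s$ is constant (call it $\Vc_n$) and finite by construction, so that (ii) reduces on each such interval to solving, for each $i\in\Vc_n$ separately, the classical SDE \eqref{def-dynX2} (resp. the coupled system \eqref{def-dynX2}--\eqref{def-dynY2}) with initial datum inherited from the parent's position at the birth time. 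Assumption \ref{Assumption_H_0}(ii)--(iii) gives linear growth and Lipschitz continuity of $\lambda,\sigma$ (resp. $\hat\lambda,\hat\sigma$) uniformly in $a\in A$, hence for every fixed progressively measurable control $\alpha^i$ there is a unique strong solution with finite moments of all orders on $[T_n,T_{n+1}\wedge T]$; pasting these solutions across $n$ yields the $\F$-adapted process $Z^{t,\mu,\alpha}$ (resp. $\hat Z^{t,\hat\mu,\alpha}$), and Itô's formula applied particle by particle on each interval, then summed over the (finitely many) branching events in $(s,t]$, gives exactly \eqref{dynZ} (resp. \eqref{dynhatZ}) — the jump term collecting the contributions $\sum_{\ell=0}^{k-1}f_{i\ell}-f_i$ at each birth.

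For part (i), the key point is the non-explosion estimate $\E[\sup_{s\in[0,T]}|\Vc^{t,\mu}_s|]\le |V|e^{\gamma M(T-t)}$. Since particles evolve and branch independently given the driving noise, and the positions $X^i$ do not feed back into the branching mechanism (the intensity $\gamma p_k$ is position- and control-independent), the counting process $N_s:=|\Vc^{t,\mu}_s|$ is, up to the initial value $|V|$, a sum of $|V|$ independent copies of a continuous-time Galton--Watson process started from one individual with branching rate $\gamma$ and offspring mean $M$. I would compute $\E[N_s]$ directly: writing the compensated jump formula
\[
N_s = |V| + \int_{(t,s]\times\N}\sum_{i\in\Vc^{t,\mu}_{u-}}(k-1)\,Q^i(\d u\,\d k),
\]
taking expectations and using that the compensator of $Q^i$ is $\d u\,\gamma p_k\delta_k$ gives $\E[N_s]=|V|+\gamma(M-1)\int_t^s\E[N_u]\,\d u$, whence $\E[N_s]=|V|e^{\gamma(M-1)(s-t)}$ by Grönwall. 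For the supremum, I would note $N$ is a pure-jump nondecreasing-in-expectation process and bound $\sup_s N_s$ by $N$ evaluated just after the last branching before $T$; alternatively, a cleaner route is to dominate by the Yule-type pure-birth process obtained by replacing each branching with $k$ offspring by $\max(k,1)$... more simply, observe that $\E[\sup_{s\le T}N_s]\le \E[N_T]+\E[\sum(\text{jump sizes})]$ and that the total number of particles ever alive, $N^{tot}$, satisfies a renewal bound; but the slickest is: the process $e^{-\gamma(M-1)s}N_s$ is a nonnegative martingale (again using position-independence of the branching), so by Doob's $L^1$ maximal inequality... actually $N_s$ itself is a submartingale with $\E[N_T]=|V|e^{\gamma(M-1)(T-t)}$, and since for a nonnegative submartingale we do not get a free maximal bound, I would instead use that $N_s\le N_{T}$ fails (it is increasing only in mean, not pathwise, because $k=0$ is allowed), so the correct statement is via: $\tilde N_s:=|V|+\int\sum_i k\,Q^i$ (counting births without subtracting deaths) is nondecreasing, dominates $N_s$ pathwise, and has $\E[\tilde N_T]\le |V|e^{\gamma M(T-t)}$ by the same Grönwall argument with $M$ in place of $M-1$; then $\E[\sup_{s\le T}N_s]\le \E[\tilde N_T]\le |V|e^{\gamma M(T-t)}$.

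The main obstacle is making the inductive pasting across infinitely many potential branching times rigorous: one must rule out an accumulation point of branching times in $[0,T]$ (explosion) before the pathwise construction of $Z$ is even meaningful. This is handled precisely by the estimate in (i): since $\E[\tilde N_T]<\infty$, almost surely only finitely many births occur in $[0,T]$, so almost surely $T_n> T$ for $n$ large, the induction terminates, and $Z^{t,\mu,\alpha}_s$ is well-defined for all $s\in[0,T]$. Thus I would prove (i) first (the branching skeleton is independent of $\alpha$, $\lambda$, $\sigma$, so this is purely a statement about the Galton--Watson structure), then use non-explosion to carry out the interval-by-interval SDE construction and Itô expansion for (ii). A secondary technical point is checking $\F$-adaptedness and measurability of the concatenated solution — this follows because each particle's label-indexed noise $(B^i,Q^i)$ is adapted, the birth times are stopping times, and strong solutions of SDEs depend measurably on the initial condition and driving noise; I would invoke the standard measurable-dependence results (and the Polishness of $E_d$, $E_{d+1}$ established in Proposition \ref{PropEclosed}) rather than reprove them. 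Uniqueness in (ii) likewise follows interval by interval from pathwise uniqueness for Lipschitz SDEs, combined with the already-established uniqueness of the genealogy $\Vc^{t,\mu}$ from (i).
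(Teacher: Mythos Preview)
Your plan is correct and is essentially the standard argument; the paper itself does not give a proof but simply refers to \cite[Proposition 2.1]{claisse18}, so there is nothing to compare against beyond the cited reference. Your two-stage structure --- first establish non-explosion of the genealogy using only the Galton--Watson skeleton (which is independent of positions and controls), then build the positions interval-by-interval from classical Lipschitz SDE theory --- is exactly how such results are proved.

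One remark on your exposition: the meander through the martingale/submartingale ideas before arriving at the monotone majorant $\tilde N_s = |V| + \int_{(t,s]\times\N}\sum_{i\in\Vc_{u-}} k\,Q^i(\d u\,\d k)$ is fine as scratchwork, but in a clean write-up you should go straight to $\tilde N$. The Gr\"onwall step for $\tilde N$ works because $N_u\le\tilde N_u$ pathwise, giving $\E[\tilde N_s]\le |V|+\gamma M\int_t^s\E[\tilde N_u]\,\d u$ and hence the stated bound; you should also insert the usual localization by $\tau_n=\inf\{s:\tilde N_s\ge n\}$ before taking expectations, since a priori integrability of the Poisson integrals is not yet known. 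With that caveat, the argument is complete.
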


We refer to \cite[Proposition 2.1]{claisse18} for the proof of this proposition.

\begin{Remark}\label{rem-extention}For any $i\in \Ic$ the processes $X^{t,\mu,\alpha,i}$ and $Y^{t,\hat\mu,\alpha,i}$ are defined on times $s\in[t,T]$ such that $i\in \Vc^{t,\mu}_s$.  However, we can extend their definition  to the whole interval $[t,T]$. Suppose first that $i$ has no ancestor in $\Vc_t^{t,\mu}$:
\beqs
j~\npreceq~i & \mbox{for all } & j\in \Vc_t^{t,\mu}\;.
\enqs
Then we define  processes $X^{t,\mu,\alpha,i}$ and $Y^{t,\hat\mu,\alpha,i}$ as the unique solutions to
\beqs
\d X ^{t,\mu,\alpha,i}_s & = & \lambda(X_s^{t,\mu,\alpha,i},\alpha^{i}_s)\d s+\sigma(X_s^{t,\mu,\alpha,i},\alpha^{i}_s)\d B^i_s\\
dY ^{t,\mu,\alpha,i}_s & = & \lambda_{Y}(X^{t,\mu,\alpha,i}_s, Y^{t,\hat\mu,\alpha,i}_s,\alpha_s^{i})\, \d s + \sigma_{Y}(X^{t,\mu,\alpha,i}_s,\alpha_s^{i})\,
\d B^{i}_s
\enqs
for $s\in[t,T]$, with initial condition $X^{t,\mu,\alpha,i}_t=0$ and $Y^{t,\mu,\alpha,i}_t=0$. On the complementary case, it exists $j\in \Vc_t^{t,\mu}$ such that $j\preceq i$. Then there exists $k\geq 1$ and $\ell_1,\ldots,\ell_k$ such that
 \beqs
 i & = & j\ell_1\ldots\ell_k\;.
 \enqs
We denote the associated branching times  by $(S_0,\dots, S_k)$:
\beqs
S_{m} & = & \inf\left\{s>S_{m-1}~:~Q^{j\ell_1\ldots\ell_{m}}\left((S_{m-1},s]\times\{n_{m}\}\right)=1\right\}
\enqs
where $n_m\geq \ell_{m+1}+1$ for $m=0,\ldots,k$ with $S_{-1}=t$. Then we define  the extended processes $X^{t,\mu,\alpha,i}$ and $Y^{t,\mu,\alpha,i}$ by
\beqs
X ^{t,\mu,\alpha,i}_s & = & \mathds{1}_{[t,S_{0})}(s)X ^{{t,\mu,\alpha,j}}_s+\sum_{m=1}^{k-1}\mathds{1}_{[S_{m-1},S_{m})}(s)X ^{t,\mu,\alpha,j\ell_1\ldots\ell_m}_s+\mathds{1}_{[S_{k-1},+\infty)}(s)X ^{{t,\mu,\alpha,i}}_s \\
Y ^{{t,\hat \mu,\alpha,i}}_s & = & \mathds{1}_{[t,S_{0})}(s)Y ^{{t,\hat \mu,\alpha,j}}_s+\sum_{m=1}^{k-1}\mathds{1}_{[S_{m-1},S_{m})}(s)Y ^{t,\hat \mu,\alpha,j\ell_1\ldots\ell_m}_s+\mathds{1}_{[S_{k-1},+\infty)}(s)Y ^{{t,\hat \mu,\alpha,i}}_s
\enqs
for $s\in[t,T]$.

These extended processes can be seen as solution to a Brownian stochastic differential equation with Lipschitz coefficients. Obvious in the first case, to show it in the second one, we consider the \emph{ancestor Brownian motion} $\bar{B}^i$ defined by

\beqs
\bar{B}^i_s &  = & B^j_s\mathds{1}_{[t,S_0)}+ \sum_{m=1}^{k-1}\mathds{1}_{[S_{m-1},S_m)}(s)\left(B ^{j\ell_1\ldots\ell_m}_s-B ^{j\ell_1\ldots\ell_m}_{S_{m-1}}+B ^{j\ell_1\ldots\ell_{m-1}}_{S_{m-1}}\right)\\ & & +\mathds{1}_{[S_{k-1},+\infty)}(s)\left(B ^{i}_s-B ^{i}_{S_{k-1}}+B ^{j\ell_1\ldots\ell_{k-1}}_{S_{k-1}}\right)\;,
\enqs
for $s\in[t,T]$. This process is continuous, centered, with independent increments and variance equal to $t$, therefore a Brownian motion by Lévy's characterisation. Then the extended processes $X^{t,\mu,\alpha,i}$ and $Y^{t,\mu,\alpha,i}$ are the unique solutions to the SDE
\beq\label{extEDSX}
\d X ^{t,\mu,\alpha,i}_s & = & \bar \lambda(s,X_s^{t,\mu,\alpha,i})\d s+ \bar\sigma(s,X_s^{t,\mu,\alpha,i})\d \bar B^i_s\\ \label{extEDSY}
dY ^{t,\hat\mu,\alpha,i}_s & = & \bar \lambda_{Y}(s,X^{t,\mu,\alpha,i}_s, Y^{t,\hat\mu,\alpha,i}_s)\, \d s + \bar \sigma_{Y}(s,X^{t,\mu,\alpha,i}_s)\,
\d \bar B^{i}_s
\enq
 for $s\in[t,T]$, with initial condition $X^{t,\mu,\alpha,i}_t=x_i$ and $Y^{t,\mu,\alpha,i}_t=y_i$. The coefficients being given by
 \beqs
  \bar \lambda(s,x) & = & \mathds{1}_{[t,S_0)}\lambda(x,\alpha^j_s)+\sum_{m=1}^{k-1}\mathds{1}_{[S_{m-1},S_m)}(s)\lambda(x,\alpha^{j\ell_1\ldots\ell_m}_s)+\mathds{1}_{[S_{k-1},+\infty)}(s)\lambda(x,\alpha^i_s)
  \\
  \bar \sigma(s,x) & = & \mathds{1}_{[t,S_0)}\sigma(x,\alpha^j_s)+\sum_{m=1}^{k-1}\mathds{1}_{[S_{m-1},S_m)}(s)\sigma(x,\alpha^{j\ell_1\ldots\ell_m}_s)+\mathds{1}_{[S_{k-1},+\infty)}(s)\sigma(x,\alpha^i_s)
  \\
  \bar \lambda_{Y}(s,x,y) & = & \mathds{1}_{[t,S_0)}\lambda_{Y}(x,y,\alpha^j_s)+\sum_{m=1}^{k-1}\mathds{1}_{[S_{m-1},S_m)}(s)\lambda_{Y}(x,y,\alpha^{j\ell_1\ldots\ell_m}_s)+\mathds{1}_{[S_{k-1},+\infty)}(s)\lambda_{Y}(x,y,\alpha^i_s)
  \\
  \bar \sigma_{Y}(s,x) & = & \mathds{1}_{[t,S_0)}\sigma_{Y}(x,\alpha^j_s)+\sum_{m=1}^{k-1}\mathds{1}_{[S_{m-1},S_m)}(s)\sigma_{Y}(x,\alpha^{j\ell_1\ldots\ell_m}_s)+\mathds{1}_{[S_{k-1},+\infty)}(s)\sigma_{Y}(x,\alpha^i_s)
 \enqs
for $(s,x,y)\in[0,T]\times\R^d\times\R$. Under Assumption \ref{Assumption_H_0}, those coefficients satisfy classical Lipschitz and boundedness assumption to have uniqueness and stability of solutions. In the sequel, we shall refer by $X^{t,\mu,\alpha,i}$ and $Y^{t,\hat \mu,\alpha,i}$ either to the processes themselves or to their extended definitions if the processes are considered outside their living interval.
\end{Remark}
Under the additional regularity assumption on the coefficients with respect to the control, we have a stability result for the branching system.
 \begin{Proposition}\label{prop-stabBD}
Suppose that Assumptions \ref{Assumption_H_0} holds and fix $t\in[0,T]$, $\mu=\sum_{i\in V}\delta_{(i,x_i)}\in E_d$, $\hat \mu=\sum_{i\in V}\delta_{(i,x_i,y_i)}\in E_{d+1}$, $\alpha\in \Ac$. Let $(t_n)_{n\geq1}$,   $\left(\hat \mu_n=\sum_{i\in V_n}\delta_{(i,x_i^n,y_i^n)}\right)_{n\geq1}$ and $(\alpha^n)_{n\geq1}$ be sequences of $\R_+$, $E_{d+1} $  and $\Ac$ such that
\beqs
(t_n,\hat \mu_n) & \xrightarrow[n\rightarrow+\infty]{} & (t,\hat \mu)\;,
\enqs
and
\beqs
\E\int_0^T d_A\left(\alpha^i_s,\alpha^{n,i}_s\right)\d s \xrightarrow[n\rightarrow+\infty]{} & 0
\enqs
for all $i\in\Ic$. Then,
\beqs
\E\left[\left(|X^{t_n,\mu_n,\alpha_n,i}_s\mathds{1}_{i\in \Vc_s^{t_n,\mu_n}}-X^{t,\mu,\alpha,i}_s\mathds{1}_{i\in \Vc_s^{t,\mu}}|^2+|Y^{t_n,\hat \mu_n,\alpha_n,i}_s\mathds{1}_{i\in \Vc_s^{t_n,\mu_n}}-Y^{t,\hat \mu,\alpha,i}_s\mathds{1}_{i\in \Vc_s^{t,\mu}}|^2\right)\right]
& \xrightarrow[n\rightarrow+\infty]{}  & 0
\enqs
for all $s\in[t,T]$,
where $\mu_n=\sum_{i\in V_n}\delta_{(i,x_i^n)}\in E_d$ for any $n\geq 1$.
\end{Proposition}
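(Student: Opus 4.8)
\emph{Overview and reductions.} The plan is, for each fixed label $i\in\Ic$, to recognise both $X^{t_n,\mu_n,\alpha_n,i}$ and $X^{t,\mu,\alpha,i}$ (in the extended sense of Remark \ref{rem-extention}) as solutions of the \emph{same} Brownian SDE \eqref{extEDSX}, differing only through the initial time, the initial value, and the controls entering the coefficients, and then to conclude by the classical $L^2$-stability theory for SDEs together with an almost sure stabilisation of the branching skeleton. First, since two distinct labels are at $d_\Ic$-distance at least $1$, the weak convergence $\hat\mu_n\to\hat\mu$ in $E_{d+1}$ forces $V_n=V$ for $n$ large and $(x^n_i,y^n_i)\to(x_i,y_i)$ for each $i\in V$, so we may assume $V_n=V$; in particular $\Vc^{t_n,\mu_n}$ depends only on $(t_n,V)$ and the (fixed) Poisson measures. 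Fix $i\in\Ic$. If no $j\in V$ satisfies $j\preceq i$, then $i\notin\Vc^{t_n,\mu_n}_s\cup\Vc^{t,\mu}_s$ for every $n$ and $s$, so both terms in the statement vanish; hence we may assume $i=j\ell_1\cdots\ell_k$ with $j\in V$.

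\emph{Stabilisation of the branching skeleton.} Each Poisson measure $Q^\cdot$ is fixed and a.s.\ has finitely many jumps on $[0,T]$, so its jump-time set is a.s.\ a finite set avoiding the fixed time $t$, hence at positive distance $\delta>0$ from $t$; since $t_n\to t$, for $n$ large the first $Q^j$-jump strictly after $t_n$ equals the first one strictly after $t$, i.e.\ the death time $\tau_j$ of the ancestor $j$ stabilises. From that instant on the two systems are driven by identical data, so almost surely there is $N(\omega)$ such that for $n\ge N$ all branching times and offspring counts along $j,j\ell_1,\dots,i$ coincide with those of the limiting system, and consequently $\Vc^{t_n,\mu_n}_s=\Vc^{t,\mu}_s$ for $s\ge t\vee t_n$ and the ancestor Brownian motion of $i$ agrees with the limiting one on $[t\vee t_n,T]$. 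Since the fixed $s\in[t,T]$ is a.s.\ not a branching time, this yields $\1_{i\in\Vc^{t_n,\mu_n}_s}\to\1_{i\in\Vc^{t,\mu}_s}$ a.s.\ and $\P\big(\{i\in\Vc^{t_n,\mu_n}_s\}\triangle\{i\in\Vc^{t,\mu}_s\}\big)\to0$.

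\emph{SDE stability.} On the above event and for $n$ large, $X^{t_n,\mu_n,\alpha_n,i}$ and $X^{t,\mu,\alpha,i}$ solve \eqref{extEDSX} with the same driving Brownian motion and the same branching times, the only differences being the initial time ($t_n$ vs $t$), the initial value ($x^n_j$ vs $x_j$) and the controls ($\alpha^{n,\cdot}$ vs $\alpha^{\cdot}$) appearing in $\bar\lambda,\bar\sigma$; the same holds for $Y^{t_n,\hat\mu_n,\alpha_n,i}$, $Y^{t,\hat\mu,\alpha,i}$ via \eqref{extEDSY}, whose coefficients also depend on $X$. By Assumption \ref{Assumption_H_0}(ii)--(iii) all these coefficients are Lipschitz and of linear growth uniformly in $n$; by (iv), the boundedness of $d_A$, $w(r)\to0$ as $r\to0$ and $\E\int_0^Td_A(\alpha^{i'}_s,\alpha^{n,i'}_s)\,\d s\to0$ for each ancestor $i'$ of $i$ (via a subsequence and dominated-convergence argument), the $L^2(\d s\otimes\d\P)$-distance between the perturbed and the limiting coefficients evaluated along $(X^{t,\mu,\alpha,i}_s,Y^{t,\hat\mu,\alpha,i}_s)$ tends to $0$. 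A routine application of the Burkholder--Davis--Gundy and Gronwall inequalities (first for $X$, then for the $X$-coupled equation of $Y$), absorbing the initial-time mismatch through the small-time bound $\E|X^{t_n,\mu_n,\alpha_n,i}_{t\vee t_n}-X^{t,\mu,\alpha,i}_{t\vee t_n}|^2\to0$, then gives $\E\big[\sup_{s\in[t\vee t_n,T]}(|X^{t_n,\mu_n,\alpha_n,i}_s-X^{t,\mu,\alpha,i}_s|^2+|Y^{t_n,\hat\mu_n,\alpha_n,i}_s-Y^{t,\hat\mu,\alpha,i}_s|^2)\big]\to0$, and along the way the uniform bound $\sup_n\E\big[\sup_s(|X^{t_n,\mu_n,\alpha_n,i}_s|^4+|Y^{t_n,\hat\mu_n,\alpha_n,i}_s|^4)\big]<\infty$.

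\emph{Conclusion and main difficulty.} For the fixed $s$, write $X^{t_n,\mu_n,\alpha_n,i}_s\1_{i\in\Vc^{t_n,\mu_n}_s}-X^{t,\mu,\alpha,i}_s\1_{i\in\Vc^{t,\mu}_s}=(\1_{i\in\Vc^{t_n,\mu_n}_s}-\1_{i\in\Vc^{t,\mu}_s})X^{t_n,\mu_n,\alpha_n,i}_s+\1_{i\in\Vc^{t,\mu}_s}(X^{t_n,\mu_n,\alpha_n,i}_s-X^{t,\mu,\alpha,i}_s)$. The $L^2$-norm of the second term tends to $0$ by the previous step, and that of the first is bounded, by Cauchy--Schwarz, by $\P(\{i\in\Vc^{t_n,\mu_n}_s\}\triangle\{i\in\Vc^{t,\mu}_s\})^{1/2}(\E|X^{t_n,\mu_n,\alpha_n,i}_s|^4)^{1/2}$, which tends to $0$ by the stabilisation step and the uniform fourth-moment bound; the $Y$-term is identical, and the boundary case $s=t$ with $t_n\ne t$ is immediate (for $i\notin V$ both indicators vanish, and for $i\in V$ one has $\Vc^{t_n,\mu_n}_t=V$ for $n$ large and $X^{t_n,\mu_n,\alpha_n,i}_t\to x_i$). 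I expect the only genuinely delicate point to be the almost sure stabilisation of the branching skeleton as $t_n\to t$; the remainder is classical $L^2$-stability for SDEs with random Lipschitz coefficients, plus the elementary passage from $L^1$-convergence of the controls to convergence of the coefficients.
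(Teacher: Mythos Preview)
Your proof is correct and follows essentially the same three-step strategy as the paper: almost-sure stabilisation of the indicator $\1_{i\in\Vc_s^{t_n,\mu_n}}$, $L^2$-stability of the extended processes, and a combination step. The paper's Step~2 simply invokes a stability theorem from Krylov where you sketch the BDG--Gronwall argument, and in Step~3 the paper uses the decomposition $X^n\1_n-X\1=(X^n-X)\1_n+X(\1_n-\1)$ with the \emph{limit} process $X$ in the cross term, so that dominated convergence (with the fixed $|X_s|^2\in L^1$) suffices and no fourth-moment bound is needed; your decomposition with $X^n$ in the cross term works too but is slightly heavier.
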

\begin{proof}
We proceed in three steps.

\vspace{2mm}

\ni \textit{Step 1.} We first prove that
\beqs
\mathds{1}_{\Vc_s^{t_n,\mu_n}}(i) & \xrightarrow[n\rightarrow+\infty]{\P-a.s.} & \mathds{1}_{\Vc_s^{t,\mu}}(i)
\enqs
for all $i\in\Ic$. For that, we distinguish two cases.

\ni \textit{Case 1.} Suppose that $\mathds{1}_{\Vc_s^{t,\mu}}(i)=1$. Then, there exist $j\in\Vc_t^{t,\mu}$ and $\ell_1,\ldots, \ell_k$ such that $i=j\ell_1\ldots\ell_k$ and
\beqs
t~<~S_1~<\cdots<~S_{k-1}\leq s < S_k
\enqs
where $S_1,\ldots,S_k$ are the successive branching times:
\beq\label{cond-branch-times}
S_{m} & = & \inf\left\{r>S_{m-1}~:~Q^{j\ell_1\ldots\ell_{m}}\left((S_{m-1},r]\times\{n_{m}\}\right)=1\right\}
\enq
with $n_m\geq \ell_{m+1}+1$ for $m=1,\ldots,k$. Since $\hat \mu_n\rightarrow\hat \mu$ and $j\in \Vc_t^{t,\mu}$, there exists $N\geq 1$ such that
\beq\label{cond-ini-label}
j & \in &  \Vc_{t_n}^{t_n,\mu_n} \mbox{ for all }n\geq N.
\enq
We then get from \eqref{cond-branch-times} and \eqref{cond-ini-label} that
\beqs
i & \in &  \Vc_{s}^{t_n,\mu_n} \;.
\enqs
for $n$ large enough.

\ni \textit{Case 2.} Suppose that $\mathds{1}_{\Vc_s^{t,\mu}}(i)=0$. We then have two subcases.

\ni \textit{Subcase 2.1.} There exist $j\in\Vc_t^{t,\mu}$ and $\ell_1,\ldots, \ell_k$ such that $i=j\ell_1\ldots\ell_k$. We then have
\beq\label{subscase11}
 s~ >~ S_k & \mbox{ or } & s~<~S_{k-1}
\enq
where $S_1,\ldots,S_k$ are defined by \eqref{cond-branch-times}.
Since $\hat\mu_n\rightarrow \hat\mu$, we have $i\in  \Vc_{t_n}^{t_n,\mu_n}$ for $n$ large enough and we get from \eqref{subscase11} that $\mathds{1}_{\Vc_s^{t_n,\mu_n}}(i)=0$ large enough.

\ni \textit{Subcase 2.1.} $j\notin\Vc_t^{t,\mu}$ for any $j\preceq i$. Since the set of ancestor of $i$ is finite and  $\hat\mu_n\rightarrow \hat \mu$, we get $j\notin\Vc_{t_n}^{t_n,\mu_n}$ for any $j\preceq i$ for $n$ large enough. Therefore, we have $\mathds{1}_{\Vc_s^{t_n,\mu_n}}(i)=0$ for $n$  large enough.

\vspace{2mm}

\ni \textit{Step 2.} We prove that
\beqs
\E\left[\left(|X^{t_n,\mu_n,\alpha_n,i}_s-X^{t,\mu,\alpha,i}_s|^2+|Y^{t_n,\hat\mu_n,\alpha_n,i}_s-Y^{t,\hat \mu,\alpha,i}_s|^2\right)\right]  & \xrightarrow[n\rightarrow+\infty]{} & 0
\enqs
for $s\in[0,T]$ and $i\in \Ic$. Since $\hat\mu_n\rightarrow\hat\mu$ as $n\rightarrow+\infty$, we have
$X^{t_n,\mu_n,\alpha_n,i}_{t_n}\rightarrow X^{t,\mu,\alpha,i}_{t}$ and $Y^{t_n,\hat \mu_n,\alpha_n,i}_{t_n}\rightarrow Y^{t,\hat\mu,\alpha,i}_{t}$ as $n\rightarrow+\infty$. Using Assumption \ref{Assumption_H_0} (iii), we can apply Theorem 8.1 in \cite{Krylov} and we get the result.

\vspace{2mm}

\ni \textit{Step 3.} We then write
\beqs
\E\left[\left(|X^{t_n,\mu_n,\alpha_n,i}_s\mathds{1}_{i\in \Vc_s^{t_n,\mu_n}}-X^{t,\mu,\alpha,i}_s\mathds{1}_{i\in \Vc_s^{t,\mu}}|^2+|Y^{t_n,\hat\mu_n,\alpha_n,i}_s\mathds{1}_{i\in \Vc_s^{t_n,\mu_n}}-Y^{t,\hat\mu,\alpha,i}_s\mathds{1}_{i\in \Vc_s^{t,\mu}}|^2\right)\right] & \leq & \\
2\E\left[\left(|X^{t_n,\mu_n,\alpha_n,i}_s-X^{t,\mu,\alpha,i}_s|^2+|Y^{t_n,\hat\mu_n,\alpha_n,i}_s-Y^{t,\hat\mu,\alpha,i}_s|^2\right)\right] &  & \\
+2\E\left[\left(|X^{t,\mu,\alpha,i}_s|^2+|Y^{t,\hat\mu,\alpha,i}_s|^2\right)\left(\mathds{1}_{i\in \Vc_s^{t,\mu}}-\mathds{1}_{i\in \Vc_s^{t_n,\mu_n}}\right)^2\right]\;. &  &
\enqs
Using the dominated convergence theorem we get from Step 1
\beqs
\E\left[\left(|X^{t,\mu,\alpha,i}_s|^2+|Y^{t,\hat\mu,\alpha,i}_s|^2\right)\left(\mathds{1}_{i\in \Vc_s^{t,\mu}}-\mathds{1}_{i\in \Vc_s^{t_n,\mu_n}}\right)^2\right] & \xrightarrow[n\rightarrow+\infty]{} & 0 \;.
\enqs
This last convergence  and Step 2 give the result.
\end{proof}

Focusing on conditional laws of the controlled processes, we have a representation result. Define $\D([0,T],\Mc_F(I\times \R^{m+1}))$
as the set of \textit{c\`adl\`ag} functions from $[0,T]$ to $\Mc_F(I\times \R^{m+1})$.
We endow this set with the Skrorkhod metric related to the Prokhorov distance and the related Borel $\sigma$-algebra. From Doob's functional representation Theorem (see e.g. Lemma 1.13 in \cite{book:KALLENBERG-FMP}) for any control $\alpha$, there exists a $\Bc([0,T])\otimes \Bc(\D([0,T],\Mc_F(I\times \R^{m+1}))$-measurable function $\tilde \alpha:~[0,T]\times \D([0,T],\Mc_F(I\times \R^{m+1}))\rightarrow A^\Ic$ such that $\alpha^i(\omega)  =  \tilde \alpha ^i(s,\xi(\omega_{.\wedge s}))=\tilde \alpha ^i(s,\xi(\omega))$ for any $s\in[0,T]$, $\omega\in \Omega$ and $i\in\Ic$ .  In the sequel, we identify the control $\alpha$ with its related function $\tilde \alpha$ and we still denote by $\Ac$ the set of those controls.

For $\alpha\in\Ac$, an $\F$-stopping time $\tau$ and $\omega\in\Omega$, we define the control $\alpha^{\tau(\omega),\omega}$ by
\beqs
\left(\alpha^{\tau(\omega),\omega}\right)^i(s,\xi(\tilde \omega)) & = & \alpha^i\left(s,\xi^{\tau(\omega),\omega}(\tilde\omega)\right)
\enqs
for $i\in\Ic$, $s\geq0$ and $\tilde\omega\in\Omega$, where $\xi^{\omega,\tau(\omega)}$ is given by \eqref{def v.a. cond}.
\begin{Theorem}[Conditioning property] \label{condTHM}Suppose  that Assumption \ref{Assumption_H_0} holds and fix $t\in[0,T]$,  $\hat\mu=\sum_{i\in V}\delta_{(i,x_i,y_i)}\in E_{d+1}$ and $\alpha\in \Ac$. Then, for any bounded measurable function $f:~\D([0,T],E_{d+1})\rightarrow \R$ and any $\F$-stopping time $\tau$, we have
\beqs
\E\left[ f\left(\hat X^{t,\hat \mu,\alpha}\right) \Big|\Fc_\tau\right](\omega) & = & F\left( \tau(\omega),\hat X^{t,\hat \mu,\alpha}_{.\wedge \tau}(\omega),\alpha^{\tau(\omega),\omega} \right)\;,\qquad \P(d\omega)-a.s.
\enqs
where
\beqs
F( s,\hat x,\beta ) & = & \E\left[ f\left( (\hat x_t\mathds{1}_{t< s}+ \hat X^{s,\hat x_s,\beta}_t \mathds{1}_{t\geq s} )_{t\in[0,T]} \right) \right]
\enqs
for all $s\in[0,T]$, $\hat x\in\D([0,T], E_{d+1})$ and $\beta\in \Ac$.
\end{Theorem}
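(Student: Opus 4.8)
The plan is to establish the conditioning property by exploiting the product structure of the probability space $\Omega = (\Omega^0 \times \Omega^1)^{\Ic}$ together with the pathwise construction of the branching system from the driving noises $(B^i, Q^i)_{i \in \Ic}$. The key observation is that, since $\F$ is the completed filtration generated by $\xi$ (equivalently, by the family $(B^i, Q^i)_{i \in \Ic}$), the stopped process $\hat X^{t,\hat\mu,\alpha}_{\cdot \wedge \tau}$ is $\Fc_\tau$-measurable, and the post-$\tau$ evolution depends on $\omega$ only through the increments of the noises after $\tau$, which are independent of $\Fc_\tau$. The controls enter only through $\tilde\alpha$, which by the Doob functional representation is a deterministic measurable functional of the path of $\xi$; this is precisely why the shifted control $\alpha^{\tau(\omega),\omega}$ appears. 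So the strategy reduces to: (1) show that the whole branching system after $\tau$ is a measurable functional of $(\tau(\omega), \hat X^{t,\hat\mu,\alpha}_{\cdot\wedge\tau}(\omega))$, the shifted noises, and the shifted control $\alpha^{\tau(\omega),\omega}$; (2) apply a Markov-type / flow property to identify that functional with the system started afresh at time $\tau(\omega)$ from state $\hat X^{t,\hat\mu,\alpha}_\tau(\omega)$ under the control $\alpha^{\tau(\omega),\omega}$; (3) conclude by the independence of the shifted noises from $\Fc_\tau$ and Fubini.

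**First I would** make precise the concatenation machinery already set up in the excerpt: for the shifted path $\omega \oplus_\tau \tilde\omega$, one has $\xi(\omega \oplus_\tau \tilde\omega)_{\cdot \wedge \tau(\omega)} = \xi(\omega)_{\cdot \wedge \tau(\omega)}$ and the post-$\tau$ part of $\xi(\omega \oplus_\tau \tilde\omega)$ is built from $\tilde\omega$ with initial value reset appropriately (as in the definitions of $\oplus_\tau$ and the ancestor Brownian motion in Remark \ref{rem-extention}). I would then argue that the map $\tilde\omega \mapsto \hat X^{t,\hat\mu,\alpha}(\omega \oplus_\tau \tilde\omega)$ agrees, up to a $\P$-null set, with $\tilde\omega \mapsto \big(\hat x_u \mathds{1}_{u < \tau(\omega)} + \hat X^{\tau(\omega), \hat x_{\tau(\omega)}, \beta}_u(\tilde\omega)\, \mathds{1}_{u \geq \tau(\omega)}\big)_{u\in[0,T]}$ with $\hat x = \hat X^{t,\hat\mu,\alpha}_{\cdot\wedge\tau}(\omega)$ and $\beta = \alpha^{\tau(\omega),\omega}$. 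This is a pathwise uniqueness argument: both sides satisfy the same SDE system \eqref{def-dynX2}, \eqref{def-dynY2} (equivalently the extended SDEs \eqref{extEDSX}–\eqref{extEDSY}) with the same driving noises after $\tau$, the same control $\beta$, the same initial data at time $\tau(\omega)$, and the same branching mechanism governed by the shifted Poisson measures $(Q^i)^{\tau(\omega),\omega}$; since these SDEs have Lipschitz coefficients (Assumption \ref{Assumption_H_0} and Remark \ref{rem-extention}) and the population is a.s. finite at all times (Proposition \ref{well-pose-branch-pop}(i)), uniqueness propagates inductively across the successive branching times. The finiteness of the alive set is what lets this induction terminate on any bounded time interval.

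**With this pathwise identity in hand**, the conclusion is the standard "freezing" step. By Doob's functional representation, $f(\hat X^{t,\hat\mu,\alpha})$ is a measurable functional $G$ of $\xi(\omega)$, so $f(\hat X^{t,\hat\mu,\alpha}(\omega \oplus_\tau \tilde\omega)) = G(\xi(\omega \oplus_\tau \tilde\omega))$ decomposes into an $\Fc_\tau$-measurable part (the stopped path $\xi(\omega)_{\cdot\wedge\tau}$ and $\tau(\omega)$ itself) and a part depending on $\tilde\omega$ only through the post-$\tau$ increments of the noises. Since the family $(B^i, Q^i)_{i \in \Ic}$ has independent increments and independent components, the collection of shifted noises $\{(B^i_{\cdot} - B^i_{\tau})\mathds{1}_{\cdot \geq \tau}, Q^i(\cdot \cap (\tau, T])\}_{i}$ is independent of $\Fc_\tau$ and has, under $\P(\tilde\omega \in \cdot)$, the same law as the original noises started at $0$. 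Therefore, applying the conditional Fubini theorem (conditioning on $\Fc_\tau$ and integrating out $\tilde\omega$),
\[
\E\big[f(\hat X^{t,\hat\mu,\alpha}) \mid \Fc_\tau\big](\omega) = \E\big[f(\hat X^{t,\hat\mu,\alpha}(\omega \oplus_\tau \cdot))\big] = F\big(\tau(\omega), \hat X^{t,\hat\mu,\alpha}_{\cdot\wedge\tau}(\omega), \alpha^{\tau(\omega),\omega}\big), \quad \P(d\omega)\text{-a.s.},
\]
which is the claim. Measurability of $F$ in its arguments follows from Proposition \ref{prop-stabBD} (stability of the branching system in the initial condition and the control) together with boundedness of $f$ and dominated convergence; a monotone class argument reduces the general bounded measurable $f$ to the bounded continuous case where the stability result applies directly.

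**The main obstacle** I anticipate is Step (1)–(2): rigorously verifying the pathwise identity $\hat X^{t,\hat\mu,\alpha}(\omega \oplus_\tau \tilde\omega) = \big(\ldots + \hat X^{\tau(\omega),\hat x_{\tau(\omega)},\beta}(\tilde\omega)\ldots\big)$ $\P$-a.s. This is delicate because the branching structure is random — the set of alive particles $\Vc^{t,\mu}_s$, the labels, and the branching times all depend on $\omega$ — so one cannot simply invoke a single SDE uniqueness statement. One must carefully set up an induction over branching times after $\tau$, checking at each step that (a) the shifted Poisson measures $(Q^i)^{\tau(\omega),\omega}$ restricted to $(\tau(\omega), T]$ coincide with the Poisson measures driving $\hat X^{\tau(\omega),\hat x_{\tau(\omega)},\beta}$, (b) the shifted control $\beta = \alpha^{\tau(\omega),\omega}$ acts on particle $i$ exactly as $\alpha^i$ acts on the concatenated path, which is where the functional form of $\tilde\alpha$ and the identity $\xi(\omega\oplus_\tau\tilde\omega)_{s} = \xi^{\tau(\omega),\omega}(\tilde\omega)_s$ for $s \geq \tau(\omega)$ are essential, and (c) the pre-$\tau$ data propagated forward through the ancestor Brownian motion matches on both sides. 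This is the kind of bookkeeping that the appendix (referenced at the end of the introduction) is presumably devoted to; in the main text I would state the pathwise identity as a lemma, sketch the induction, and defer the full verification of the shifting identities for $\xi$, the noises, and the controls to the appendix.
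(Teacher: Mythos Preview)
Your approach is correct but takes a genuinely different route from the paper's. You propose a \emph{strong solution} argument: exploit the explicit product structure $\Omega = (\Omega^0\times\Omega^1)^\Ic$, establish a pathwise flow identity $\hat X^{t,\hat\mu,\alpha}(\omega\oplus_\tau\tilde\omega) = \hat X^{\tau(\omega),\hat x_{\tau(\omega)},\alpha^{\tau(\omega),\omega}}(\tilde\omega)$ via SDE pathwise uniqueness (inducting over the post-$\tau$ branching times), and then conclude by independence of the post-$\tau$ noise increments from $\Fc_\tau$ together with conditional Fubini.

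The paper instead takes a \emph{weak / martingale problem} route, following Claisse--Talay--Tan. It passes to a canonical path space $\mathbf X = \D([0,T],E_{d+1})\times\D([0,T],\tilde E_{m+1})$ carrying both the branching population and the driving process $\xi$, formulates an associated controlled martingale problem (Definitions~\ref{defMP} and~\ref{Shifted martingale problem}), and proves uniqueness for it (Theorem~\ref{uniqueness}, via a one-dimensional-marginals lemma and Ethier--Kurtz Theorem~4.2). The conditioning property then follows from the Stroock--Varadhan principle that regular conditional probability distributions of a martingale-problem solution are themselves solutions to the shifted martingale problem (Theorem~\ref{IdCondProbCanSpace}, invoking Theorem~6.1.3 of \cite{SV97}); uniqueness identifies these conditional laws with the laws of the restarted processes, and the result is pulled back to $\Omega$ via \eqref{identif-law-1}--\eqref{identif-law-2}.

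Your route is more elementary and stays entirely on the original probability space, but the pathwise bookkeeping you flag as the main obstacle---matching shifted Poisson measures, shifted controls via the functional representation of $\alpha$, and Brownian increments across an a.s.\ finite but random sequence of branching times---is genuinely the bulk of the work, and null-set management (the exceptional set for the pathwise identity must not depend on $\omega$) needs care. The paper's route sidesteps that bookkeeping by outsourcing the conditioning step to Stroock--Varadhan theory, at the cost of setting up and proving uniqueness for a bespoke martingale problem on an enlarged canonical space. The martingale-problem framework is also the more robust choice if one later wishes to weaken the Lipschitz assumptions to a regime with only weak solutions.
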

The proof of this result is postponed to Appendix \ref{proof-condtioning}. It follows the same lines as the proof of Theorem 2 in \cite{CTT13}, and relies on a uniqueness property for the related branching martingale controlled problem which is studied in Appendix \ref{sec-mart-cont-pb}.

\subsection{The stochastic target problem}

To define the stochastic target problem, let $g:~\Ic\times\R^d\rightarrow\R$ be a function satisfying the following assumption.

\begin{Assumption} \label{Assumption_H_1} The function $g_i$ is continuous on $\R^d$ for all $i\in \Ic$.
\end{Assumption}

Fix an initial time $t\in[0,T]$ and an initial population $\mu=\sum_{i\in V}\delta_{(i,x_i)}$. We look for an  initial position $y$ for  the target process and a control $\alpha\in \Ac$ such that
\beqs
Y^{t,\hat \mu,\alpha,i}_t & = & y\;,\quad i\in V,
\enqs
and $Y^{t,\hat \mu,\alpha}$ and $X^{t, \mu,\alpha}$ satisfies the terminal constraints
\beqs
Y^{t,\hat\mu,\alpha,i}_T & \geq  & g_i(X^{t,\mu,\alpha,i}_T)\;,\quad  i \in \Vc^{t,\mu}_T\;.
\enqs
More precisely, we look for the \textit{reachability set}
\beqs
\Rc(t,\mu) & = & \Big\{y \in \R,~ \ :~\exists \alpha\in\Ac ~:~ Y^{t,\hat\mu,\alpha,i}_T ~ \geq ~  g_i(X^{t,\mu,\alpha,i}_T)\;,\quad  i \in \Vc^{t,\mu}_T\\
 & &\qquad\qquad\qquad\qquad\qquad\qquad\qquad \qquad\mbox{ with } \hat \mu =\sum_{i\in V}\delta_{(i,x_i,y)}\Big\}\ .
\enqs
for $t\in [0,T]$ and  $\mu=\sum_{i\in V}\delta_{(i,x_i)}\in E_d $.
Since the target processes $Y^i$ has an explicit impact only on its drift $\lambda_Y$ and not on its diffusion coefficient $\sigma_Y$,  the reachability set satisfies the following \textit{monotonicity property}.
\begin{Proposition}\label{monotonicity_property} Suppose that Assumptions \ref{Assumption_H_0} holds.
 For $\mu=\sum_{i\in V}\delta_{(i,x_i)}\in E_d $ and $y \in \Rc(t,\mu)$ we have $[y,\infty[ \subseteq \Rc(t,\mu)$.
\end{Proposition}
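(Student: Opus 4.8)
The plan is to show that if $y \in \Rc(t,\mu)$, then any $y' \geq y$ also belongs to $\Rc(t,\mu)$, by exhibiting the \emph{same} control $\alpha$ that works for $y$ and comparing the two target processes pathwise. The key observation, already flagged in the text, is that the initial value of the target process affects only the drift $\lambda_Y$ and not the diffusion $\sigma_Y$; hence the difference of two target processes started from different initial points (under the same control, driving Brownian motions, and branching structure) solves a linear SDE with no diffusion term, and the sign of this difference is preserved.

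First I would fix $y \in \Rc(t,\mu)$ with associated control $\alpha \in \Ac$ and associated $\hat\mu = \sum_{i\in V}\delta_{(i,x_i,y)}$, so that $Y^{t,\hat\mu,\alpha,i}_T \geq g_i(X^{t,\mu,\alpha,i}_T)$ for all $i \in \Vc^{t,\mu}_T$. Fix $y' \geq y$ and set $\hat\mu' = \sum_{i\in V}\delta_{(i,x_i,y')}$. Since the $X$-dynamics \eqref{def-dynX1}--\eqref{def-dynX2} do not involve $Y$, the branching structure $\Vc^{t,\mu}$ and the process $X^{t,\mu,\alpha}$ are unchanged when passing from $\hat\mu$ to $\hat\mu'$; only the target process changes. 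Using the extended definitions from Remark \ref{rem-extention}, for each label $i \in \Ic$ the processes $Y^{t,\hat\mu,\alpha,i}$ and $Y^{t,\hat\mu',\alpha,i}$ satisfy the SDE \eqref{extEDSY} with the \emph{same} coefficients $\bar\lambda_Y(s,X^{t,\mu,\alpha,i}_s,\cdot)$ and $\bar\sigma_Y(s,X^{t,\mu,\alpha,i}_s)$ and the same ancestor Brownian motion $\bar B^i$, differing only through the initial condition ($y_i$ versus $y_i'$ along the relevant ancestral chain, where for $i$ with ancestor $j \in V$ the relevant starting value is $y$ resp. $y'$). Set $\Delta^i_s := Y^{t,\hat\mu',\alpha,i}_s - Y^{t,\hat\mu,\alpha,i}_s$. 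Then $\Delta^i$ solves
\beqs
\d \Delta^i_s & = & \big(\bar\lambda_Y(s,X^{t,\mu,\alpha,i}_s,Y^{t,\hat\mu',\alpha,i}_s) - \bar\lambda_Y(s,X^{t,\mu,\alpha,i}_s,Y^{t,\hat\mu,\alpha,i}_s)\big)\,\d s\;,
\enqs
with $\Delta^i_t = y' - y \geq 0$ (for $i$ an offspring of some $j \in V$), and no diffusion term since the $\sigma_Y$ contributions cancel. By Assumption \ref{Assumption_H_0}(iii) the map $y \mapsto \bar\lambda_Y(s,x,y)$ is Lipschitz, so writing the drift difference as $c^i_s \Delta^i_s$ for a bounded adapted process $c^i$, Gronwall / the explicit representation $\Delta^i_s = (y'-y)\exp(\int_t^s c^i_u\,\d u) \geq 0$ gives $\Delta^i_s \geq 0$ for all $s \in [t,T]$, hence $Y^{t,\hat\mu',\alpha,i}_T \geq Y^{t,\hat\mu,\alpha,i}_T \geq g_i(X^{t,\mu,\alpha,i}_T)$ for every $i \in \Vc^{t,\mu}_T$. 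Therefore $y' \in \Rc(t,\mu)$, and since $y' \geq y$ was arbitrary, $[y,\infty[ \subseteq \Rc(t,\mu)$.

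The only mildly delicate point is bookkeeping with the extended processes and the branching: one must be careful that the comparison is carried out along each ancestral chain using the representation of Remark \ref{rem-extention}, and that the event $\{i \in \Vc^{t,\mu}_T\}$ is the same under $\hat\mu$ and $\hat\mu'$ (which is immediate since $\Vc^{t,\mu}$ depends only on $\mu$ and the Poisson measures, not on $y$). The comparison argument for the scalar linear ODE $\d\Delta^i_s = c^i_s\Delta^i_s\,\d s$ with $c^i$ merely measurable and bounded is standard (Gronwall, or direct integration since the equation is pathwise linear), so I do not expect any real obstacle there; alternatively one can invoke a comparison theorem for one-dimensional SDEs directly on the $Y$-equation. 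The proof is essentially a one-step pathwise monotonicity argument, and the structural fact that makes it work — $\sigma_Y$ independent of $y$ — is exactly the hypothesis highlighted before the statement.
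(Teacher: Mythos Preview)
Your proposal is correct and follows essentially the same approach as the paper's proof: both form the difference $\Delta^i = Y^{t,\hat\mu',\alpha,i}-Y^{t,\hat\mu,\alpha,i}$, observe that the diffusion terms cancel since $\sigma_Y$ does not depend on $y$, write the drift difference as $c^i_s\Delta^i_s$ via the Lipschitz property of $\lambda_Y$ in Assumption~\ref{Assumption_H_0}(iii), and solve the resulting linear ODE explicitly to obtain $\Delta^i_T=(y'-y)\exp(\int_t^T c^i_u\,\d u)\geq 0$. Your additional care in noting that $\Vc^{t,\mu}$ and $X^{t,\mu,\alpha}$ are unchanged, and in invoking the extended processes of Remark~\ref{rem-extention}, is appropriate and matches what the paper does implicitly through its use of $\bar\lambda_Y$.
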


\begin{proof}
Fix a control $\alpha=(\alpha^i)_{i \in \Ic}$ and a starting point $(t,\mu)$. We take $y\in\Rc(t,\mu)$, $y'\geq y$ and write $\hat \mu$ (resp. $\hat \mu'$) for $\sum_{i\in V}\delta_{(i,x_i,y)}$ (resp. $\sum_{i\in V}\delta_{(i,x_i,y')}$), $Y^{i}$ (resp. $Y'^{i}$) for $Y^{t,\hat\mu,\alpha,i}$ (resp. $Y^{t,\hat\mu',\alpha,i}$) and $\delta{Y}^i$ for ${Y}'^{i}-{Y}^{i}$.
We then have
\beqs
\delta {Y}^i_s
 & = &  (y' -y) +
\int_t^s \chi_u\delta {Y}^i_u \d u
\enqs
for $s\geq t$, where $\chi$ is given by
\beqs
\chi_u :=\frac{
\bar\lambda_{Y}\left(u,{X}^i_u,{Y}^i_u\right) - \bar \lambda_{Y}\left(u,{X}^i_u,Y'^i_u\right)
}{\delta {Y}_u} \ , \quad u\geq 0\;,
\enqs
with $\bar \lambda_{Y}$ defined in Remark \ref{rem-extention}.
From the Lipschitz property of $\lambda_{Y}$ in Assumption \ref{Assumption_H_0},  $\chi$ is bounded and
\beqs
\delta \bar{Y}^i_T  &  = & (y'-y)\exp\left(\int_t^T \chi_u \d u\right) ~\geq~ 0\;,\quad  \P-a.s.
\enqs
Since  $y \in \Yc(t,\mu)$, we get
\beqs
Y^{t,\mu,\alpha,y', i}_T& \geq & Y^{t,\mu,\alpha,y, i}_T ~\geq~ g_i\left(X_T^{t,\mu,\alpha, i}\right)\;,\quad \P- a.s.
\enqs
This is true for all $i \in \Vc^{t,\mu}_T$, therefore $y' \in \Rc(t,\mu)$.
\end{proof}

From Proposition \ref{monotonicity_property},  the closure $\overline{\Rc(t,\mu)}$ of the reachability set is a half line interval characterized by its lower bound. We then define the value function  $v$ as the infimum of $\Rc$:
\beq
v(t,\mu) &  := &  \inf \Rc(t,\mu)\nonumber\\
 & = & \inf\Big\{y\in\R~:~\exists \alpha\in \Ac \;,~
Y^{t,\hat\mu,\alpha,i}_t ~=~ y ~ \forall i \in V, \nonumber \\
 & & \qquad \qquad \qquad \qquad \mbox{ and }
Y^{t,\hat\mu,\alpha,i}_T  ~\geq~  g_i\left(X^{t,\mu,\alpha,i}_T\right) ~\forall i \in \Vc^{t,\mu}_T\text{ a.s.}\Big\}\label{def-value-function}
\enq
for all $t\in[0,T]$ and $\mu=\sum_{i\in V}\delta_{(i,x_i)}\in E_d$, with the usual convention that $\inf(\emptyset)=+\infty$.
Our aim is to provide an analytical characterisation of the value function $v$.
\begin{Remark}
The value function $v$ or the reachability set $\Rc$ might not be well defined in the case where an extinction of the alive population of particle  happens before $T$. In this case we take the convention that the terminal condition is always satisfied if $\Vc_T^{t,\mu}=\emptyset$. In the sequel, we keep this convention for other constraints on $(X^{t,\mu,\alpha,i}_\theta,Y^{t,\hat\mu,\alpha,i}_\theta)$ with $\Vc_\theta^{t,\mu}=\emptyset$ and $\theta$ a stopping time.
\end{Remark}

We next provide a new formulation of the function $v$.

\begin{Proposition} Under Assumptions \ref{Assumption_H_0}, the value function function $v$ satisfies the following identity
\beq\label{ext-def-value-function}
v(t,\mu) &  = &  \inf\Big\{y\in\R~:~\exists \alpha\in \Ac \;,~\exists \hat \mu=\sum_{i\in V}\delta_{(i,x_i,y_i)}\in E_{d+1}\;\mbox{ such that }\nonumber \\
 & & \qquad \quad y_i ~\leq~ y ~ \forall i \in V,
 \mbox{ and }
Y^{t,\hat\mu,\alpha,i}_T  ~\geq~  g_i(X^{t,\mu,\alpha,i}_T) ~\forall i \in \Vc^{t,\mu}_T\text{ a.s.}\Big\}
\enq
for all $t\in[0,T]$ and $\mu=\sum_{i\in V}\delta_{(i,x_i)}\in E_d$.
\begin{proof} Denote by $\tilde v(t,\mu)$ the right hand side of \eqref{ext-def-value-function}.
Since the set whose  infimum is  $v(t,\mu)$ is included in the one whose infimum is  $\tilde v(t,\mu)$, we obviously have
\beqs
\tilde v(t,\mu) & \leq &  v(t,\mu)\;.
\enqs
Fix now $y\in\R$ for which there exist $\alpha\in\Ac$ and $\hat \mu=\sum_{i\in V} \delta_{(i,x_i,y_i)}\in E_{d+1}$ such that
\beqs
y_i & \leq & y \;,\quad i\in V\;,
\enqs
and
\beqs
Y^{t,\hat\mu,\alpha,i}_T   & \geq &   g_i(X^{t,\mu,\alpha,i}_T) \;,\quad i\in \Vc^{t,\mu}_T
\enqs
Set $\bar\mu=\sum_{i\in V} \delta_{(i,x_i,y_i)}\in E_{d+1}$. By the comparison argument used in the proof of Proposition \ref{monotonicity_property}, we have
\beqs
Y^{t,\bar\mu,\alpha,i}_T~\geq ~Y^{t,\hat\mu,\alpha,i}_T   & \geq &   g_i(X^{t,\mu,\alpha,i}_T) \;,\quad i\in \Vc^{t,\mu}_T
\enqs
Therefore $y\geq v(t,\mu)$ and $\tilde v(t,\mu)\geq v(t,\mu)$.
\end{proof}
\end{Proposition}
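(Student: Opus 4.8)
The plan is to establish the two inequalities between $v(t,\mu)$ and the right-hand side of \eqref{ext-def-value-function}, which I denote $\tilde v(t,\mu)$. The inequality $\tilde v(t,\mu)\le v(t,\mu)$ is immediate: every $y$ admissible in the definition \eqref{def-value-function} of $v(t,\mu)$, say with control $\alpha\in\Ac$ and $\hat\mu=\sum_{i\in V}\delta_{(i,x_i,y)}$, is also admissible for $\tilde v(t,\mu)$ upon taking $y_i=y$ for every $i\in V$, so that trivially $y_i\le y$. Thus the set whose infimum is $v(t,\mu)$ is contained in the set whose infimum is $\tilde v(t,\mu)$, and the infima are ordered accordingly.

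For the reverse inequality $v(t,\mu)\le\tilde v(t,\mu)$ I would fix an arbitrary $y\in\R$ admissible for $\tilde v(t,\mu)$: there are $\alpha\in\Ac$ and $\hat\mu=\sum_{i\in V}\delta_{(i,x_i,y_i)}\in E_{d+1}$ with $y_i\le y$ for all $i\in V$ and $Y^{t,\hat\mu,\alpha,i}_T\ge g_i(X^{t,\mu,\alpha,i}_T)$ for all $i\in\Vc^{t,\mu}_T$, $\P$-a.s. I then raise every target component to the common value $y$, i.e. set $\bar\mu=\sum_{i\in V}\delta_{(i,x_i,y)}$, keeping the same control $\alpha$ and the same driving data $(B^i,Q^i)_{i\in\Ic}$; since $\lambda,\sigma$ and the Poisson measures do not depend on the target, the spatial population $X^{t,\mu,\alpha}$ and the set of alive particles $\Vc^{t,\mu}$ are unchanged. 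The goal becomes $Y^{t,\bar\mu,\alpha,i}_T\ge Y^{t,\hat\mu,\alpha,i}_T$ a.s. for every $i\in\Vc^{t,\mu}_T$, from which $Y^{t,\bar\mu,\alpha,i}_T\ge g_i(X^{t,\mu,\alpha,i}_T)$ follows, so $y$ is admissible for $v(t,\mu)$ and $v(t,\mu)\le y$; taking the infimum over admissible $y$ gives $v(t,\mu)\le\tilde v(t,\mu)$.

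The comparison $Y^{t,\bar\mu,\alpha,i}_T\ge Y^{t,\hat\mu,\alpha,i}_T$ is precisely the mechanism already used in the proof of Proposition \ref{monotonicity_property}, which I would reuse. Working with the extended processes of Remark \ref{rem-extention}, along each genealogical line $Y^{t,\bar\mu,\alpha,i}$ and $Y^{t,\hat\mu,\alpha,i}$ are driven by the same ancestor Brownian motion $\bar B^i$ with the same, target-independent, diffusion coefficient $\bar\sigma_Y$; hence the difference $\delta Y^i:=Y^{t,\bar\mu,\alpha,i}-Y^{t,\hat\mu,\alpha,i}$ solves the pathwise linear ODE $\d(\delta Y^i_s)=\chi^i_s\,\delta Y^i_s\,\d s$ with $\chi^i$ bounded by the Lipschitz constant of $\lambda_Y$ from Assumption \ref{Assumption_H_0}(iii), all stochastic integrals cancelling. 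At the initial time $\delta Y^i_t=y-y_j\ge 0$ where $j\in V$ is the ancestor of $i$, and the inheritance rule \eqref{def-dynY1} passes this ordering to the offspring at each branching time, so an induction along the a.s. finite genealogy (Proposition \ref{well-pose-branch-pop}) yields $\delta Y^i_s=\delta Y^i_t\exp\!\big(\int_t^s\chi^i_u\,\d u\big)\ge 0$ for all $s\in[t,T]$, in particular at $s=T$.

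The only genuinely delicate point is this last propagation of the inequality through the branching structure: one must verify that raising the initial target value of the mother particles raises the target value of every descendant, at every time, $\omega$ by $\omega$. This goes through exactly as in Proposition \ref{monotonicity_property} because the decisive structural fact — that the target enters only the drift and not the diffusion, so the comparison reduces to a scalar pathwise ODE — holds particle by particle, and the a.s. finitely many branchings permit a straightforward induction along the genealogy. No regularity of $g$ or of the value function is required, so Assumptions \ref{Assumption_H_0} alone suffice.
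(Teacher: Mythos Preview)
Your proof is correct and follows essentially the same approach as the paper: set inclusion for $\tilde v\le v$, and for $v\le\tilde v$ raise all initial target values to the common level $y$ and invoke the comparison argument of Proposition~\ref{monotonicity_property}. You spell out the propagation of the inequality through the branching structure in more detail than the paper does, but the idea and the key lemma are the same.
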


\subsection{An example of application from fintech}\label{example}
Fintech is the contraction of the  words finance and technology. It refers to recent technologies that allows for the improvement and the automation of the delivery and use of financial services. The field has emerged at the beginning of the 21-st century and covered technologies used by established financial institutions. Since that time, the field has evolved to also include crypto-currencies which are decentralised financial assets. Those assets are based on the block-chain technology. The main idea of that structure is to keep any new transaction registered in a chain by adding new blocks and sharing the extension of the original chain over the network, so that every user keeps in mind the transaction and can certify it.
We refer to \cite{Nakamoto2008} for a description of how a block-chain base crypto-currency works in the case of the Bitcoin.

Due to the structure of this kind of assets, a fork can appear in the chain (see \cite{Saleh2021}). In this case, the original asset is transformed into several assets. A natural question that arises is how to evaluate  an option on crypto-currencies in this case. We present here the example of the super-replication of options on asset that may fork and show that it is a particular case of the branching stochastic target presented above.

We consider a financial market on which is defined a crypto-currency with price process $(S_t)_{t\in[0,T]}$. We suppose that the process $S$ is a branching diffusion and describe its dynamics.
We first define the set  $\Vc_t$ of alive particles at time $t\in[0,T]$ as previously done in Section \ref{subsecdef}. The initial condition for the process $S$ is  a constant $(S_0>0)$.
Assume the version $i\in\Ic$ of the crypto-currency is alive at time $t\in[0,T]$, dies at some random time $\tau_i\geq t$ and gives birth to $k$ new versions $i0,\ldots,i(k-1)$.  The position at a time $s\geq \tau_i$ of the new the crypto-currencies are given by
\beq
S^{i\ell}_{\tau_i} & = & S^{i}_{\tau_i}\label{def-dynS1}\\
\d S ^{i\ell}_s & = & S_s^{i\ell}\left(b\d s+c\d B^{i\ell}_s\right)\;,\label{def-dynS2}
\enq
for $\ell=0,\ldots,k-1$ and $s\geq \tau_i$ such that version $i\ell$ is alive at time $s$.
Here $b$ and $c$ are two positive constants.

In addition to that asset, we assume that there exists on the market a non-risky asset $S^0$ with deterministic interest rate $r>0$ and with initial condition $S^0_0=1$, that is $S_t=e^{rt}$ for $t\in[0,T]$.

An investment strategy consists in a process $\pi=(\pi^i_t)_{t\in[0,T],i\in\Ic}$ of $\F$-progressive processes valued in $[0,1]$, where $\pi^i_t$ represents the proportion of the wealth invested in the version $S^i$ of the crypto-currency.  We denote by $\Ac$ the set of such strategies.
For $\pi\in \Ac$, we also denote by $V^{V_0,\pi}$ the self financing wealth process related to the initial capital $V_0$ and strategy $\pi$. According to \eqref{def-dynS1}-\eqref{def-dynS2} it is given by
\beq
V^{V_0,\pi,i\ell}_{\tau_i} & = & V^{V_0,\pi,i}_{\tau_i}\label{def-dynV1}\\
\d V ^{V_0,\pi,i\ell}_s & = & V_s^{V_0,\pi,i\ell}\left(((b-r)\pi_s^{i\ell}+r)\d s+c\pi_s^{i\ell}\d B^{i\ell}_s\right)\;,\label{def-dynV2}
\enq
for $\ell=0,\ldots,k-1$ and $s\geq \tau_i$ such that version $i\ell$ is alive at time $s$.

We then consider a financial derivative on the asset $S$ that consists in a Put Option but with a strike  $K_i$ depending on the version of the crypto-currency $S$. Such a product can express the need to hedge againts a decrease of the value of the asset $S$ that depends on the branch.

The computation of the super-replication problem leads to solve the following stochastic target problem
\beqs
w_0 & = & \inf\left\{\nu\in\R_+~:~\exists \pi\in \Ac \;,~
V^{\nu,\pi,i}_T  ~\geq~  (K_i-S^{i}_T)_++\kappa ~\forall i \in \Vc^{}_T\text{ a.s.}\right\}\;,
\enqs
where $\kappa$ is a positive constant representing some friction.
We next modify this problem to satisfy our assumptions. For that, we first define the processes
\beqs
Y_t^{y,\pi,i} & = & \log \left( V^{e^y,\pi,i}_t \right)\\
X^{i}_t & = &  \log \left( S^{i}_t \right)
\enqs
for $t\in[0,T]$ and $i\in \Vc_t$. From \eqref{def-dynS1}-\eqref{def-dynS2} and \eqref{def-dynV1}-\eqref{def-dynV1}, we get
\beq\label{def-dynX}
X^{i\ell}_{\tau_i} ~ = ~ X^{i}_{\tau_i}\;,&  &
\d X ^{i\ell}_s ~ = ~ (b-\frac{c^2}{2})\d s+c\d B^{i\ell}_s\;,\\ \label{def-dynY}
Y^{y,\pi,i\ell}_{\tau_i} ~ = ~ Y^{y,\pi,i}_{\tau_i}\;, & &
\d Y ^{y,\pi,i\ell}_s ~ = ~ \left((b-r)\pi_s^{i\ell}-\frac{1}{2}c^2(\pi_s^{i\ell})^2+r\right)\d s+c\pi_s^{i\ell}\d B^{i\ell}_s\;,
\enq
for $\ell=0,\ldots,k-1$ and $s\geq \tau_i$ such that version $i\ell$ is alive at time $s$.
We observe that the dynamics of the processes $Y$ and $X$ satisfy Assumption \ref{Assumption_H_0}. We also define the functions $g$ as
\beqs
g_i\left( x \right) & = & \log\left(\left( K_i-e^x \right)_++\kappa \right)\;,\quad (x,i)\in \R\times\Ic\;,
\enqs
which satisfies Assumption \ref{Assumption_H_1}. Finally, we define the optimal value
\beqs
v_0 & = & \inf\left\{y\in\R~:~\exists \pi\in \Ac \;,~
Y^{y,\pi,i}_T  ~\geq~  g_i(X^{i}_T) ~\forall i \in \Vc^{}_T\text{ a.s.}\right\}\;,
\enqs
a special case of \eqref{def-value-function}. We notice that the optimal value $w_0$ is related to $v_0$ by
\beqs
w_0= \exp(v_0)\;.
\enqs
We suppose that $\bar K:=\sup_{i\in\Ic} K_i<+\infty$. The value function $v$ related to $v_0$ is then bounded. Indeed, by taking the initial condition $t\in[0,T]$ and $y=-r(T-t)+\log(\bar K+\kappa)$ and the control $\pi^{i}_t=0$ for $i\in\Ic$ and $t\in[0,T]$, we get from \eqref{def-dynY}
\beqs
Y^{t,\hat \mu,\pi,i}_T & \geq & g_i(X^{t,\mu,i}_T)\;,\quad i\in\Vc^{t,\mu}_T
\enqs
for $\mu=\sum_{i\in V}\delta_{(i,x^i)}\in E_d$ and $\mu=\sum_{i\in V}\delta_{(i,x^i,y)}\in E_{d+1}$. Therefore
\beqs
v(t,\mu) & \leq & -r(T-t)+\log(\bar K+\kappa)\;,\quad (t,\mu)\in [0,T]\times E_d\;.
\enqs
Moreover, for any $y\in \Rc(t,\mu)$ and $\pi$ the related admissible control, we have
\beqs
\left((b-r)\pi_s^{i\ell}-\frac{1}{2}c^2(\pi_s^{i\ell})^2+r\right) & \leq & \left(\frac{b-r}{c}\right)^2+r
\enqs
Therefore we get
\beqs
y+\left(\left(\frac{b-r}{c}\right)^2+r\right)(T-t) ~~ \geq ~~ \E\left[Y_T^{t,\hat \mu,\pi,i}\right] & \geq & \E\left[g_i(X_T^{t,\mu,\pi,i})\right]~~\geq~~\log(\kappa)\;.
\enqs
Therefore,
\beqs
v(t,\mu) & \geq & -\left(\left(\frac{b-r}{c}\right)^2+r\right)(T-t)+\log(\bar K+\kappa)\;,\quad (t,\mu)\in [0,T]\times E_d\;.
\enqs
In particular, $v$ satisfies the growth condition  \eqref{uniqueness:hyp:i_to_infty} of the comparison Theorem \ref{Thm:comparison}. If we suppose also that $r=0$ and $g_i=0$ for $i\in \Ic$ of the form $i=i_1\cdots i_n$ with $i_\ell\geq I$ for some $\ell$ where $I$ is a given bound,  then $v$ also satisfies condition \eqref{thm:growthinx} of Theorem \ref{Thm:comparison}.
\section{Dynamic programming}\label{sec3}

\subsection{Measurable selection}
In establishing a dynamic programming principle, we need an admissible control as concatenation of admissible controls depending on the position of the branching processes at an intermediary time. For this end, we use a \textit{measurable selection approach}.

Let $\mathcal{U}$ be the \textit{target set} defined by
\beqs
\Uc(t,\hat \mu) & = & \left\{\alpha \in \Ac\ :~
Y^{t,\hat\mu,\alpha,i}_T ~ \geq ~ g_i(X^{t,\mu,\alpha,i}_T)\ \forall i \in \Vc^{t,\mu,\alpha}_T a.s.\right\}\ , \enqs
for $(t,\hat \mu)\in [0,T]\times E_{d+1}$ with $\hat\mu=\sum_{i\in V}\delta_{(i,x_i,y_i)}$ and $
\mu=\sum_{i\in V}\delta_{(i,x_i)}\in E_d$.
Let $S:= [0,T]\times E_{d+1}$ and
\beqs
D & := &
\left\{(t,\hat \mu) \in S \ : \ \Uc(t,\hat \mu) \neq \emptyset \right\} \; .
\enqs
Our aim is to exhibit a function that associates to each $(t,\hat \mu)\in D$ a control $\alpha\in\Uc(t,\hat \mu)$ in a measurable way.

We denote by $\Pc(S)$ the set of probability measures on $(S,\Bc([0,T])\otimes\Bc(E_{d+1}))$ and we endow $\Ac$ with the Borel $\sigma$-algebra $\Bc(\Ac)$ related to the distance
\beqs{}
(\alpha,\alpha') & \mapsto & \sum_{i\in\Ic} \frac{1}{2^{|i|}}\wedge\E\int_0^T|\alpha^i_s-\alpha'^i_s|\d s
\enqs{}
where $|i|=i_1+\cdots+i_n$ for $i=(i_1,\ldots,i_n)\in\N ^n$ and $n\geq 1$. We then have the following measurable selection result.
\begin{Lemma} \label{measurable_selector} Suppose that Assumptions \ref{Assumption_H_0} and \ref{Assumption_H_1} hold.
For each $\nu \in \Pc(S)$, there exists a measurable function $\phi_{\nu}:(D,\Bc(D)) \to (\Ac, \Bc(\Ac))$ such that
\beqs
\phi_{\nu}(t,\hat \mu) \in \Uc(t,\hat\mu) \mbox{ for }\nu\mbox{-a.e. }(t,\hat \mu)\in D\ .
\enqs
\end{Lemma}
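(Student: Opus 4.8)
The plan is to invoke a Jankov--von Neumann type measurable selection theorem, so the work reduces to checking its hypotheses: that the set $D$ is analytic (or at least universally measurable) in $S$, that the graph
\[
\mathrm{Gr}(\Uc) \;=\; \bigl\{(t,\hat\mu,\alpha)\in S\times\Ac \;:\; \alpha\in\Uc(t,\hat\mu)\bigr\}
\]
is a Borel (or analytic) subset of $S\times\Ac$, and that all the underlying spaces are Polish. The last point is already available: $S=[0,T]\times E_{d+1}$ is Polish by Proposition \ref{PropEclosed} and the discussion of $E_\ell$, and $\Ac$ is Polish for the distance exhibited just before the statement (it is a countable weighted sum of the Polish $L^1$-type pseudometrics on each coordinate, made separating by Doob's functional representation identification). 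So the crux is the Borel/analytic measurability of $\mathrm{Gr}(\Uc)$.

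For that, the key step is to rewrite the defining constraint of $\Uc(t,\hat\mu)$ in a form that is manifestly measurable in $(t,\hat\mu,\alpha)$. First I would use Proposition \ref{prop-stabBD}: it says the map
\[
(t,\hat\mu,\alpha)\;\longmapsto\; \Bigl(X^{t,\mu,\alpha,i}_s\1_{i\in\Vc^{t,\mu}_s},\,Y^{t,\hat\mu,\alpha,i}_s\1_{i\in\Vc^{t,\mu}_s}\Bigr)
\]
is continuous in $L^2$ for each fixed $s$ and $i$, hence (after passing to a subsequence / using that $L^2$-continuity gives Borel measurability into $L^0$) the random variables $X^{t,\mu,\alpha,i}_T$, $Y^{t,\hat\mu,\alpha,i}_T$ and the indicator $\1_{i\in\Vc^{t,\mu}_T}$ are jointly Borel-measurable functions of $(t,\hat\mu,\alpha,\omega)$. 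The terminal constraint
\[
Y^{t,\hat\mu,\alpha,i}_T \;\geq\; g_i\bigl(X^{t,\mu,\alpha,i}_T\bigr)\quad\text{for all }i\in\Vc^{t,\mu}_T,\ \P\text{-a.s.}
\]
is then equivalent to
\[
\P\!\left[\,\bigcap_{i\in\Ic}\Bigl\{\1_{i\in\Vc^{t,\mu}_T}\bigl(Y^{t,\hat\mu,\alpha,i}_T - g_i(X^{t,\mu,\alpha,i}_T)\bigr)^{-}=0\Bigr\}\right]\;=\;1,
\]
a countable intersection over $i\in\Ic$ (which is countable) using Assumption \ref{Assumption_H_1} that each $g_i$ is continuous hence Borel. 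Thus $\mathrm{Gr}(\Uc)$ is the preimage of $\{1\}$ under the map $(t,\hat\mu,\alpha)\mapsto \P[\cdots]$, and by Fubini / the measurability of $(t,\hat\mu,\alpha,\omega)\mapsto \1_{\{\text{constraint holds}\}}$, this probability is a Borel function of $(t,\hat\mu,\alpha)$. Hence $\mathrm{Gr}(\Uc)$ is Borel, and in particular $D$, being its projection onto $S$, is analytic.

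With these facts in hand, I would apply the Jankov--von Neumann selection theorem (see e.g. Bertsekas--Shreve, or the version used in \cite{SonerTouzi}): since $\mathrm{Gr}(\Uc)$ is an analytic subset of the product of Polish spaces $S\times\Ac$ with projection $D$, there exists an analytically (hence universally) measurable map $\psi:D\to\Ac$ with $(t,\hat\mu,\psi(t,\hat\mu))\in\mathrm{Gr}(\Uc)$ for all $(t,\hat\mu)\in D$, i.e. $\psi(t,\hat\mu)\in\Uc(t,\hat\mu)$. Finally, for a fixed $\nu\in\Pc(S)$, a universally measurable map agrees $\nu$-a.e. with a Borel map $\phi_\nu:(D,\Bc(D))\to(\Ac,\Bc(\Ac))$; this $\phi_\nu$ is the desired selector, satisfying $\phi_\nu(t,\hat\mu)\in\Uc(t,\hat\mu)$ for $\nu$-a.e.\ $(t,\hat\mu)\in D$.

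The main obstacle I anticipate is the joint measurability in $(t,\hat\mu,\alpha,\omega)$ of the terminal data of the branching system — in particular handling the indicator $\1_{i\in\Vc^{t,\mu}_T}$, whose dependence on $(t,\hat\mu)$ is only a.s.-continuous (Step 1 of Proposition \ref{prop-stabBD}), not continuous. This requires care: one argues that a.s.-continuity along every sequence converging in $S$, combined with separability of $S$, yields Borel measurability of the relevant maps into $L^0(\Omega)$, and then that composing with the continuous $g_i$ and taking negative parts preserves this; the bound $\E[\sup_s|\Vc^{t,\mu}_s|]\le |V|e^{\gamma M(T-t)}$ from Proposition \ref{well-pose-branch-pop} controls the sum over $i$ and justifies the countable-intersection reduction. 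Everything else is a routine invocation of standard descriptive-set-theoretic selection machinery.
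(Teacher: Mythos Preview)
Your proposal is correct and follows the same route as the paper: establish that the graph $\mathrm{Gr}(\Uc)$ is Borel in $S\times\Ac$, invoke the Jankov--von Neumann theorem to obtain a universally measurable selector, then pass to a Borel $\phi_\nu$ agreeing with it $\nu$-a.e. The one simplification you miss is that the paper observes $\mathrm{Gr}(\Uc)$ is in fact \emph{closed} --- Proposition \ref{prop-stabBD} together with Assumption \ref{Assumption_H_1} let one pass to the limit in the terminal inequality along any convergent sequence $(t_n,\hat\mu_n,\alpha_n)$ --- which bypasses the joint-measurability-in-$\omega$ concerns you flag at the end.
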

\begin{proof}
$S$ being endowed with the product $\sigma$-algebra $\Bc([0,T])\otimes\Bc(E_{d+1})$ is a Borel space as product of Borel spaces. Also $\Ac$ endowed with $\Bc(\Ac)$ is a Borel space. Let $C$ be the following set
\beqs
C & := & \left\{ (t,\hat \mu) \in S\times \Ac \ : \ \alpha \in \Uc(t,\hat \mu) \right\}\ .
\enqs
From Proposition \ref{prop-stabBD}  and Assumption \ref{Assumption_H_1}, $C$ is closed and a fortiori a Borel subset of $S\times \Ac$.

\vspace{2mm}

    \noindent $\bullet$ \textit{Step 1: Measurable selector.}

Since $C$ is a Borel set, it is analytic by \cite[Proposition 7.36]{book:Bertsekas-Shreve}. From the Jankov-von Neumann measurable selection theorem (see e.g.  \cite[Proposition 7.49]{book:Bertsekas-Shreve}), there exists an analytically measurable function $\phi:D \to \Ac$ such that
\beqs
\left\{ \left(t,\hat\mu,\phi(t,\hat \mu)\right)~:~(t,\hat \mu)\in S\right\}& \subset & C\;.
\enqs

    \noindent $\bullet$   \textit{Step 2: Construction of a Borel measurable $\phi_{\nu}$ such that $\phi_{\nu} = \phi$ $\nu$-almost everywhere.}

Fix $\nu\in\Pc(S)$ and denote by $\Bc_\nu(S)$ the completion of the Borel $\sigma$-algebra $\Bc(S)$ under $\nu$. From  \cite[Corollary 7.42.1]{book:Bertsekas-Shreve} any analytic set is universally measurable. Therefore $\phi$ is universally measurable, and, from the definition of the universal $\sigma$-algebra, $\phi$ is $\Bc_\nu(S)$-measurable. Since $\Bc_\nu(S)$ is the completion of $\Bc(S)$ under $\nu$, there exists a Borel measurable map $\phi_{\nu}$ such that $\phi_\nu(t,\hat\mu)=\phi(t,\hat\mu)$ for $\nu$-almost every $(t,\hat\mu)\in S$.
\end{proof}

\subsection{Dynamic programming principle}
For $t\in[0,T]$, we denote by $\Tc_{[t,T]}$ the set of $\F$ stopping times valued in $[t,T]$. The dynamic programming principle may be stated as follows.

\begin{Theorem}\label{theorem:DPP}
Under Assumptions \ref{Assumption_H_0} and \ref{Assumption_H_1}, the value function satisfies
\beq\label{eq:DPP1}
v(t,\mu) & = & \inf\bigg\{y\in\R~:~\exists \alpha\in \Ac \;,~\exists \hat \mu=\sum_{i\in V}\delta_{(i,x_i,y_i)}\in E_{d+1}\;\mbox{ such that }\nonumber \\
 & & \qquad \quad y_i ~\leq~ y ~ \forall i \in V,
 \mbox{ and }
Y^{t,\hat\mu,\alpha,i}_\theta  ~\geq~  v\left(\theta,\delta_{\left(i,X^{t,\mu,\alpha,i}_{\theta}\right)}\right) ~\forall i \in \Vc^{t,\mu}_\theta\text{ a.s.}\bigg\}
\enq
for any $(t,\mu)\in [0,T]\times E_d$ and $\theta \in \mathcal{T}_{[t,T]}$.

\end{Theorem}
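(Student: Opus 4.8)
The plan is to prove the dynamic programming principle by a standard two-inequality argument, denoting by $\tilde v(t,\mu)$ the right-hand side of \eqref{eq:DPP1}. The ``$\geq$'' inequality ($v \geq \tilde v$) is the easy one: starting from an admissible pair $(\alpha,\hat\mu)$ for the extended problem \eqref{ext-def-value-function} defining $v(t,\mu)$ with level $y$, I would apply the Conditioning Property (Theorem \ref{condTHM}) at the stopping time $\theta$. Conditionally on $\Fc_\theta$, the shifted control $\alpha^{\theta(\omega),\omega}$ together with the population $\delta_{(i, X^{t,\mu,\alpha,i}_\theta)}$ and target level $Y^{t,\hat\mu,\alpha,i}_\theta$ still satisfies the terminal constraint $Y_T \geq g_i(X_T)$ on each surviving branch; hence $Y^{t,\hat\mu,\alpha,i}_\theta \in \Rc(\theta,\delta_{(i,X^{t,\mu,\alpha,i}_\theta)})$, so by definition of $v$ as the infimum of the reachability set, $Y^{t,\hat\mu,\alpha,i}_\theta \geq v(\theta, \delta_{(i,X^{t,\mu,\alpha,i}_\theta)})$ a.s. for all $i \in \Vc^{t,\mu}_\theta$. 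This shows $y$ is admissible for the problem defining $\tilde v(t,\mu)$, whence $\tilde v(t,\mu) \leq y$, and taking the infimum over such $y$ gives $\tilde v(t,\mu) \leq v(t,\mu)$.

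For the ``$\leq$'' inequality ($v \leq \tilde v$), I would fix $y$ admissible for $\tilde v(t,\mu)$, with witness $(\alpha,\hat\mu)$ such that $y_i \leq y$ and $Y^{t,\hat\mu,\alpha,i}_\theta \geq v(\theta, \delta_{(i,X^{t,\mu,\alpha,i}_\theta)})$ a.s. on $\Vc^{t,\mu}_\theta$. The idea is to concatenate $\alpha$ on $[t,\theta]$ with, on each branch $i$ alive at $\theta$, a near-optimal control for the sub-problem started at $(\theta, \delta_{(i,X^{t,\mu,\alpha,i}_\theta)})$ from level $Y^{t,\hat\mu,\alpha,i}_\theta$. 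To do this measurably I would invoke the measurable selection Lemma \ref{measurable_selector}: up to an $\eps$-enlargement (replacing the target level $Y^{t,\hat\mu,\alpha,i}_\theta$ by $Y^{t,\hat\mu,\alpha,i}_\theta + \eps$, which is still in $\Rc$ by the monotonicity Proposition \ref{monotonicity_property} since $Y^{t,\hat\mu,\alpha,i}_\theta \geq v \geq \inf\Rc$), the random variable $\omega \mapsto (\theta(\omega), \delta_{(i, X^{t,\mu,\alpha,i}_\theta(\omega))}, Y^{t,\hat\mu,\alpha,i}_\theta(\omega)+\eps)$ is $\Fc_\theta$-measurable and takes values in $D$ a.s., so composing with $\phi_\nu$ (for $\nu$ the law of this random variable) yields an $\Fc_\theta$-measurable choice of control $\beta^i \in \Uc(\theta, \cdot)$. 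I then define $\bar\alpha$ to equal $\alpha$ before $\theta$ and, on the event $\{i \in \Vc^{t,\mu}_\theta\}$ and the subtree rooted at $i$, to equal $\beta^i$ after $\theta$; one checks $\bar\alpha \in \Ac$ and, again via the Conditioning Property, that the resulting target process satisfies $Y^{t,\bar{\hat\mu},\bar\alpha,j}_T \geq g_j(X^{t,\mu,\bar\alpha,j}_T)$ for all $j \in \Vc^{t,\mu}_T$. Since the starting target level on branch $i$ is $y_i \leq y$, this shows $y + \eps$ (or a level slightly above $y$) is admissible for the extended problem defining $v(t,\mu)$, hence $v(t,\mu) \leq y + \eps$; letting $\eps \downarrow 0$ and then taking the infimum over admissible $y$ gives $v(t,\mu) \leq \tilde v(t,\mu)$.

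The main obstacle is the ``$\leq$'' direction, specifically making the concatenation rigorous across the branching structure: one must carefully track that the labels $j \in \Vc^{t,\mu}_T$ each have a unique ancestor $i \in \Vc^{t,\mu}_\theta$ (or predate $\theta$), that the shifted processes $X^{\theta,\hat x,\beta^i}$ and the Conditioning Property genuinely glue to reproduce $X^{t,\mu,\bar\alpha,j}$ on the subtree, and that the measurable selection is performed with respect to the correct law $\nu$ so that the ``$\nu$-a.e.'' clause in Lemma \ref{measurable_selector} translates into an a.s. statement under $\P$. The $\eps$-relaxation and the monotonicity property are what make the selection fall inside the domain $D$ where $\Uc \neq \emptyset$; without it one would only know $Y^{t,\hat\mu,\alpha,i}_\theta \geq v$, which need not itself be attained. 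I would also need the convention (stated in the Remark following \eqref{def-value-function}) that constraints on branches with empty alive population at $\theta$ or $T$ are vacuously satisfied, so that extinction events cause no difficulty.
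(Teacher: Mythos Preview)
Your proposal is correct and follows essentially the same two-inequality approach as the paper: the flow/Conditioning Property for $v\ge\tilde v$, and measurable selection (Lemma~\ref{measurable_selector}) together with an $\eps$-enlargement and the Conditioning Property (Theorem~\ref{condTHM}) for $v\le\tilde v$. One minor bookkeeping point worth tightening: the paper introduces the $\eps$-slack at the \emph{initial} time $t$ (working with $\hat\mu_\eps=\sum_{i\in V}\delta_{(i,x_i,y_i+\eps)}$ and using strict monotonicity of the flow to get $Y^{t,\hat\mu_\eps,\alpha,i}_\theta>Y^{t,\hat\mu,\alpha,i}_\theta\ge v(\theta,\cdot)$) rather than at time $\theta$ as you describe, which cleanly avoids the mismatch between the level $Y_\theta+\eps$ at which you select $\beta^i$ and the level $Y_\theta$ actually reached by the concatenated process.
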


\begin{proof} We first define the \textit{reachability sets}  by
\beqs
\Yc(t, \mu) & := & \bigg\{(y_i)_{i\in V} \in \R^V~ \ :~\Uc(t,\hat \mu) \neq \emptyset \mbox{ with } \hat \mu =\sum_{i\in V}\delta_{(i,x_i,y_i)}\bigg\}\; .
\enqs
and
\beqs
\Yc^{\theta}(t,\mu) & = & \bigg\{(y_i)_{i\in V} \in \R^V \ :\ \exists \alpha\in \Ac\; \mbox{ such that }\nonumber \\
 & & ~ \qquad  Y^{t,\hat \mu,y,\alpha, i}_{\theta} \geq v\left(\theta,\delta_{(i,X^{t,\mu,\alpha,i}_{\theta})}\right) ~ \forall i \in \Vc^{t,\mu}_{\theta}\text{ a.s.} \mbox{ with }\hat \mu=\sum_{i\in V}\delta_{(i,x_i,y_i)}\in E_{d+1}\bigg\}\ .
\enqs
for $t\in [0,T]$ and  $\mu=\sum_{i\in V}\delta_{(i,x_i)}\in E_d $ and $\theta\in \Tc_{[t,T]}$.
Fix now $t\in[0,T]$ and $\mu=\sum_{i\in V}\delta_{(i,x_i)}\in E_d$. Denote by $v_\theta(t,\mu)$ the right hand side of \eqref{eq:DPP1}.

To prove $v(t,\mu)  \geq v_{\theta}(t,\mu)$, we show that $\Yc(t,\mu) \subset \Yc^{\theta}(t,\mu)$. Let $(y_i)_{i\in V} \in \Yc(t,\mu)$. By definition there exists $\alpha \in \Ac$ such that
\beqs
Y^{t,\hat \mu,\alpha, j}_T \geq  g_j\left(X^{t,\mu,\alpha, j}_T\right)\quad \forall j \in \Vc^{t,\mu}_T \ .
\enqs
From the uniqueness of solutions to \eqref{def-dynX1}-\eqref{def-dynX2} and \eqref{def-dynY1}-\eqref{def-dynY2} (or equivalently \eqref{extEDSX}-\eqref{extEDSX}) we get the following flow property
\beqs
X^{t,{\mu},\alpha,j}_T  &  =  &  X^{\theta,\delta_{\left(i,{X}^{t,\mu,\alpha,i}_{\theta}\right)},\alpha,j}_T\;, \\
Y^{t,{\hat\mu},\alpha,j}_T  &  =  &  Y^{\theta,\delta_{\left(i,{X}^{t,\mu,\alpha,i}_{\theta},{Y}^{t,\hat \mu,y,\alpha,i}_{\theta}\right)},\alpha,j}_T\;,
\enqs
for all $i\in \Vc_\theta^{t,\mu}$ and $j\in \Vc_T^{t,\mu}$ such that $i\preceq j$.

We therefore get
\beqs
Y^{\theta,\delta_{\left(i,{X}^{t,\mu,\alpha,i}_{\theta},{Y}^{t,\hat \mu,\alpha,i}_{\theta}\right)},\alpha,j}_T & \geq & g_j\left(X^{\theta,\delta_{\left(i,{X}^{t,\mu,\alpha,i}_{\theta}\right)},\alpha,j}_T\right)\;,
\qquad j \in \Vc^{\theta,\delta_{\left(i,{X}^{t,\mu,\alpha,i}_{\theta}\right)}}_T \;,
\enqs
for all $i\in \Vc^{t,\mu}_\theta$.
Given the definition of the value function $v$, we get $Y^{t,\hat \mu,\alpha,i}_{\theta}  \geq  v\left(\theta,\delta_{(i,X^{t,\mu,\alpha,i}_{\theta})}\right)$ for all $i \in \Vc^{t,\mu}_\theta$ a.s. and $(y_i)_{i\in V} \in \Yc^{\theta}(t,\mu)$.

We now turn to the reverse inequality $v_\theta(t,\mu)\geq v (t,\mu)$. To this end, we prove that $\Yc^{\theta}_\eps(t,\mu) \subset \Yc(t,\mu)$ for any $\eps>0$, where
\beqs
\Yc^{\theta}_\eps(t,\mu) & = & \left\{ (y_i+\eps)_{i\in V}~:~(y_i)_{i\in V}\in \Yc^{\theta}(t,\mu) \right\}\;.
\enqs
 Let $(y_i)_{i\in V}  \in \Yc^{\theta}(t,\mu)$ and $\alpha \in \Ac$ such that $Y^{t,\hat\mu,\alpha,i}_{\theta}  \geq  v\left(\theta,\delta_{(i,X^{t,\mu,\alpha,i}_{\theta})}\right)$ for all $i \in \Vc^{t,\mu}_\theta$ a.s. where $\hat \mu=\sum_{i\in V}\delta_{(i,x_i,y_i)}$. Fix now $\varepsilon > 0$ and set $\hat\mu=\sum_{i\in V}\delta_{(i,x_i,y_i)}$ and ${\hat\mu}_\eps=\sum_{i\in V}\delta_{(i,x_i,y_i+\eps)}$.
From the definition of the value function and the strict monotonicity of the flow w.r.t. the initial value, we get $Y^{t,\hat\mu,\alpha,i}_{\theta}(\omega) < Y^{t,\hat\mu_\varepsilon,\alpha,i}_{\theta}(\omega) \in \Yc\left(\theta, \delta_{(i,X^{t,\mu,\alpha,i}_{\theta})}\right)(\omega)$ for all $i \in \Vc^{t,\mu}_\theta$ for $\P$-a.e. $\omega\in\Omega$.
Consider the probability measure  $\nu$ induced on $S$ by
\beqs
\omega \mapsto
\left(\theta, \hat{Z}^{t,\hat \mu_\eps,\alpha}_{\theta}\right)(\omega)\ ,
\enqs
and $\phi_{\nu}$ the measurable map defined in Lemma \ref{measurable_selector}. We have
\beq\label{DPPproof:negligeable_sets}
Y^{\tilde{t},\tilde{\mu},\tilde{y}, \phi_{\nu}(\tilde{t},\tilde{\mu},\tilde{y}),i}_T  & \geq &   g_i\left(X^{\tilde{t},\tilde{\mu},\phi_{\nu}(\tilde{t},\tilde{\mu},\tilde{y}),i}_T\right) \qquad \forall i \in \Vc_T ~ \P\mbox{-a.s. for }\nu\mbox{-a.e. }(\tilde{t},\tilde{\mu},\tilde{y})\in D\ .
\enq

We define $\hat{\Xi} := \left(\theta, \hat{Z}^{t,\hat\mu_\eps,\alpha}_{\theta}\right)$ and $\Xi := \left(\theta, Z^{t,\mu,\alpha}_{\theta}\right)$. For $i_T \in \Vc^{t,\mu}_T$, if $i_\theta \in \Vc^{t,\mu}_\theta$ such that $i_\theta \preceq i_T$, the initial conditions at time $\theta$ for $Y_T^{i_T}$ and $X_T^{i_T}$ are respectively  $\Xi^{}$ and $\hat\Xi^{}$.

Binding flow properties with the measurable selector, we can find a negligible set $N_1$ of $\Fc$ such that there exists negligible set $N_{2,\omega_1}$ of $\Fc$ for each $\omega_1\in N_1^c$ such that
\beqs
Y^{\hat\Xi(\omega_1),\phi_\nu(\hat\Xi(\omega_1)),i}_T(\omega_2) & \geq & g_{i_T}\left(X^{\Xi(\omega_1),\phi_\nu(\hat\Xi(\omega_1)),i}_T (\omega_2)\right)\qquad \forall i \in \Vc^{\Xi(\omega_1)}_T(\omega_2)
\enqs
for all $\omega_1\in N_1^c$ and $\omega_2\in N_{2,\omega_1}^c$.

We now define the set $\bar{N}:=\{\omega\ :\ \omega \in N_1^c\ , \omega \in N_{2,\omega}\}$ and we prove that $\bar N$ is negligible.
We first have $\bar{N}\subset N_1^c\cap \bar{N}_2$ where
\beqs
\bar{N}_2 & = & \left\{\omega\ \in \Omega :\
\exists i \in \Vc_T^{\Xi(\omega)}(\omega)\;, \quad
Y^{\hat \Xi^{}(\omega),\phi_\nu(\hat \Xi^{}(\omega)),i}_T(\omega) < g_{i}\left(X^{\Xi(\omega),\phi_\nu(\hat \Xi^{(i)}(\omega)),i}_T (\omega)\right)
\right\}~.
\enqs
The set $\bar N_2$ can be rewritten as
\beqs{}
\bar{N}_2 &=& \left\{\omega\ \in \Omega :\
\prod_{ i \in \Vc_T^{\Xi}}\1_{
Y^{\hat \Xi^{},\phi_\nu(\hat\Xi^{}),i}_T \geq g_{i}\left(X^{\Xi,\phi_\nu(\hat \Xi^{}),i}_T \right)}(\omega) = 0 \right\}\;.
\enqs
Taking the conditional expectation w.r.t. $\Fc_\theta$, we have up to a negligible set
\beqs
\bar{N}_2&=& \left\{\omega\ \in \Omega :\
\mathbb{E}\left[\prod_{ i \in \Vc_T^{\Xi}}\1_{
Y^{\hat \Xi^{},\phi_\nu(\hat \Xi^{}),i}_T \geq g_{i}\left(X^{\Xi,\phi_\nu(\hat\Xi^{}),i}_T \right)}\Bigg| \Fc_\theta\right](\omega) = 0 \right\}~.
\enqs
Using Theorem \ref{condTHM} wet get
\beqs
\bar{N}_2 &=& \left\{\omega\ \in \Omega :\
\int_{\Omega} \prod_{ i \in \Vc_T^{\Xi(\omega)}(\omega\oplus_\theta\omega')}\1_{
Y^{\hat\Xi(\omega),\phi_\nu(\hat\Xi(\omega)),i}_T(\omega\oplus_\theta\omega') \geq g_i\left(X^{\Xi(\omega),\phi_\nu(\hat\Xi(\omega)),i}_T (\omega\oplus_\theta\omega')\right)
}d\P(\omega') = 0 \right\} \\
&=& \left\{\omega\ \in \Omega :\ \P(N_{2,\omega})= 0 \right\}~.
\enqs
Therefore we get, up to a negligible set, $\bar{N}_2 \subset N_1$ and $\P(\bar{N}_2)=0$

We now fix $\alpha\in A$  and define the control $\bar \alpha=(\bar\alpha^i)_{i\in\Ic}$ by
\beqs
\bar{\alpha}^i(\omega) := \begin{cases}
\alpha ^i(\omega) \1_{[0,\theta(\omega))} + \phi_{\nu}^i(\hat\Xi(\omega))(\omega)\1_{[\theta(\omega),T]} &\mbox{ if }\omega \in \Omega\setminus \bar{N}\\
a &\mbox{ if }\omega \in \bar{N}
\end{cases} \
\enqs
for all $i\in\Ic$ wirth $a\in A$.
Since $\left(Y^{\hat\Xi,\phi_\nu(\hat\Xi)}_T,X^{\Xi,\phi_\nu(\hat\Xi),i}_T \right)  =
\left(Y^{t,\hat\mu_\varepsilon, \bar{\alpha},i}_T, X^{t,\mu,\bar{\alpha},i}_T\right)$ for each $i \in \Vc^{t,\mu}_T$ a.s. and $\bar N_2$ is negligible, we get $(y_i+\varepsilon)_{i\in V} \in \Yc(t,\mu)$.
\end{proof}

\section{PDE characterisation}\label{sec4}
\subsection{Branching property}

Conditionally to their birth, the alive particles , and consequently their branches, are independent in the uncontrolled case. In out case, this \textit{branching property} is passed down to the value function in the following way.

\begin{Proposition}[Branching property] Let Assumption  \ref{Assumption_H_0} holds. The value function $v$ satisfies
\beq\label{branching-property}
v(t,\mu)  & = & \max_{i\in V}v(t,\delta_{(i,x^i)})
\enq
for any $(t,\mu = \sum_{i \in V} \delta_{(i,x^i)} ) \in [0,T]\times E_d$.
\end{Proposition}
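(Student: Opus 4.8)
The plan is to prove the two inequalities separately, using the dynamics' independence structure and a superposition/localization argument. Throughout, fix $(t,\mu)$ with $\mu=\sum_{i\in V}\delta_{(i,x^i)}$.

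\medskip

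\noindent\emph{Step 1: $v(t,\mu)\ \leq\ \max_{i\in V}v(t,\delta_{(i,x^i)})$.} Set $\bar y:=\max_{i\in V}v(t,\delta_{(i,x^i)})$ and fix $\eps>0$. For each $i\in V$, since $\bar y+\eps> v(t,\delta_{(i,x^i)})$, by definition of the value function there is a control $\alpha^{(i)}\in\Ac$ and (using the extended formulation \eqref{ext-def-value-function}) an initial value $y_i\le \bar y+\eps$ such that the single-particle target process started from $(i,x^i,y_i)$ under $\alpha^{(i)}$ dominates $g_\cdot(X_T)$ on $\Vc_T$ a.s. I would then assemble a single control $\alpha\in\Ac$ for the whole population $\mu$ by letting $\alpha$ act on the subtree rooted at label $i$ according to $\alpha^{(i)}$ (for labels not below any $i\in V$ it is irrelevant). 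Because the driving noises $(B^j,Q^j)_{j\succeq i}$ for different roots $i\in V$ are mutually independent, and because the dynamics \eqref{def-dynX1}--\eqref{def-dynX2}, \eqref{def-dynY1}--\eqref{def-dynY2} of a particle depend only on the noises of its ancestor chain, the branch rooted at $i$ under $\alpha$ has exactly the same law as the single-root problem started at $(i,x^i,y_i)$ under $\alpha^{(i)}$. Hence the terminal constraint $Y^{t,\hat\mu,\alpha,j}_T\ge g_j(X^{t,\mu,\alpha,j}_T)$ holds a.s. for every $j\in\Vc_T^{t,\mu}$, with initial value $\le\bar y+\eps$ on all of $V$. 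Letting $\eps\downarrow0$ gives $v(t,\mu)\le\bar y$.

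\medskip

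\noindent\emph{Step 2: $v(t,\mu)\ \geq\ \max_{i\in V}v(t,\delta_{(i,x^i)})$.} Fix any $y\in\Rc(t,\mu)$ with witnessing control $\alpha\in\Ac$, so $Y^{t,\hat\mu,\alpha,j}_T\ge g_j(X^{t,\mu,\alpha,j}_T)$ for all $j\in\Vc_T^{t,\mu}$ a.s., where $\hat\mu=\sum_{i\in V}\delta_{(i,x^i,y)}$. Fix $i\in V$. Restricting the population constraint to the labels $j\in\Vc_T^{t,\mu}$ with $i\preceq j$, and using the flow property for \eqref{extEDSX}--\eqref{extEDSY} exactly as in the proof of Theorem \ref{theorem:DPP}, the pair $(X^{t,\mu,\alpha,j},Y^{t,\hat\mu,\alpha,j})$ restricted to $j\succeq i$ coincides with the single-root controlled process started from $(i,x^i,y)$ under the (suitably relabelled, same-noise) control; this shows $y\in\Rc(t,\delta_{(i,x^i)})$, hence $y\ge v(t,\delta_{(i,x^i)})$. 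Since this holds for every $i\in V$, we get $y\ge\max_{i\in V}v(t,\delta_{(i,x^i)})$; taking the infimum over $y\in\Rc(t,\mu)$ yields the claim.

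\medskip

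\noindent\emph{Main obstacle.} The delicate point is Step 1: constructing a single global control $\alpha\in\Ac$ out of the per-root controls $\alpha^{(i)}$ and verifying that the resulting branch laws are genuinely the single-root ones. Because controls are progressively measurable functionals of the full observation $\xi$ (not just of the relevant ancestor noises), one must be careful to define $\alpha^j$, for $j$ in the subtree of $i$, as a functional of only the data of that subtree (shifting labels appropriately), so that the independence of $(B^j,Q^j)_{j\succeq i}$ across distinct roots $i$ actually decouples the branches; this is where the mutual independence in the probabilistic setting and the label/shift machinery ($\oplus_\tau$, the ancestor Brownian motion of Remark \ref{rem-extention}) do the work. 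A secondary point is the reduction to single initial values $y_i$ rather than a common $y$, which is handled by the monotonicity property (Proposition \ref{monotonicity_property}) together with the extended formulation \eqref{ext-def-value-function}, exactly as in the proof of that equivalence.
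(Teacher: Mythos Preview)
Your proof is correct and follows the same two-inequality strategy as the paper (restrict a global control to each branch for $\geq$; glue per-branch controls into a global one for $\leq$, using monotonicity). Your ``main obstacle'' is not actually an obstacle: the terminal constraint is almost sure rather than distributional, so no independence or label-shifting is needed---setting $\alpha^j:=(\alpha^{(i)})^j$ for $j\succeq i$ yields, pathwise on the same $\omega$, exactly the same controlled branch as in the single-root problem (same noises, same SDEs, same alive set), and the a.s.\ constraint transfers directly.
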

\begin{proof}
For ${\mu}=\sum_{ i \in V}\delta_{(i,x_i)}\in E_d$, we define
\beqs
K^{{\mu}} := \left\{y\in\R~:~\exists \alpha\in \Ac, ~
Y^{t,\hat{\mu},\alpha,i}_T  \geq  g_i(X^{t,{\mu},\alpha,i}_T)\ \forall i \in \Vc^{t,{\mu},\alpha}_T a.s.\mbox{ with },~
\hat{\mu} = \sum_{ i \in V}\delta_{(i,x_i,y)}\right\}~ .
\enqs

Proving $v(t,\mu)  \geq \max_{i\in V} v(t,\delta_{(i,x^i)})$ comes to verify that $K^{\mu} \subseteq \bigcap_{j \in V} K^{\delta_{(j,x^j)}}$, i.e. $K^{\mu} \subseteq K^{\delta_{(j,x^j)}}$ for each $j \in V$. If $y \in K^{\mu}$, there exists $\alpha$ satisfying the constraints in $T$ a.s. With this same $\alpha$, zooming in on the sub-population generated by each $j \in V$, we must satisfy the condition of $K^{\delta_{(j,x^j)}}$. Therefore, $y \in K^{\delta_{(j,x^j)}}$.

 Let $j$ be the index that realises the maximum in the righthand side of \eqref{branching-property}. The monotonicity property given by Proposition \ref{monotonicity_property} implies $K^{\delta_{(j,x^j)}} \subseteq K^{\delta_{(i,x^i)}}$ for all $i \in V$. Then, if $y\in K^{\delta_{(j,x^j)}}$, let $\alpha^i$ be a control for $i\in V$ that meets the demand of $K^{\delta_{(i,x^i)}}$.
To prove $y\in K^{\mu}$ we must exhibit a control that satisfies the requirements of such a set. Having a control $\alpha$ taken as $\alpha^i$ on the branches generated by each $i \in V$, we meet the conditions of $K^\mu$. Therefore, $\max_{i\in V}v(t,\delta_{(i,x^i)}) = v(t,\delta_{(j,x^j)}) \leq v(t,\mu)$
\end{proof}
From this result, we can focus on the function $\bar{v}$ defined on $\Ic \times [0,T] \times \R^d$ by
\beqs
    \bar{v}_i(t,x)  &= & v(t,\delta_{(i,x)})
\enqs
 for $(i,t,x) \in \Ic \times [0,T] \times \R^d$.
We provide in the next sections a PDE characterisation of the function $\bar v$.

\subsection{Dynamic programming equation}\label{subsection:main_result}
\subsubsection{The equation on the parabolic interior}
In a stochastic target problem, wishing to hit a given target with probability one, we must degenerate along certain directions. Moreover, we also need to control the uncertainty related the possible branching. This property enables the characterisation of the value function $\bar v$ as a solution the following PDE
\beq\label{PDE1}
\min \left\{-\partial _t\bar{v}_i(t,x)+ F\left(x,\bar{v}_i(t,x),D  \bar{v}_i(t,x),D^2_x \bar{v}_i(t,x)\right)~;~ \bar v_{i}(t,x) - \sup_{0\leq k< \bar{K}}\bar v_{ik}(t,x)\right\}& = & 0\qquad
\enq
for $(t,x)\in[0,T)\times\R^d$,
where
\beqs
\bar{K} &=& \sup\left\{k+1\in\N~:~p_k>0\right\}~,
\\
F_{}(\Theta)&=& \sup\left\{ \lambda_{Y}(x,y,a)
-\lambda(x,a)^\top p - \frac{1}{2}\mathrm{Tr}\left(\sigma\sigma^\top(x,a)M\right)~:~
a\in\Nc(x,p) \right\}
\enqs
for $\Theta =(x,y,p,M) \in \R^d \times \R \times \R^d \times \S^d$, and
\beqs
\Nc(x,p) = \left\{ a\in A~:~ N^a(x,p)=0  \right\} ~ \text{ and }~ N^a(x,p) = \sigma_{Y}(x,a)-\sigma(x,a)^\top p
\enqs
for $x,p\in \R^d $.

Since the control set $A$ is not necessarily compact, the operator associated to this PDE may not be continuous. We therefore need to define a weak formulation of \eqref{PDE1}. For that, we  introduce the relaxed semilimits of $F$ given by
\beqs
F^*(\Theta) = \limsup_{\varepsilon\to 0,\Theta'\to\Theta} F_{\varepsilon}(\Theta')~ \text{ and } ~F_*(\Theta) = \liminf_{\varepsilon\to 0,\Theta'\to\Theta} F_{\varepsilon}(\Theta')
\enqs
where
\beqs
F_{\varepsilon}(\Theta)= \sup\left\{ \lambda_{Y}(x,y,a)
-\lambda(x,a)^\top p - \frac{1}{2}\mathrm{Tr}\left(\sigma\sigma^\top(x,a)M\right)~:~
a\in\Nc_\varepsilon(x,p) \right\}
\enqs
 for $\Theta =(x,y,p,M) \in \R^d \times \R \times \R^d \times \S^d$ and $\varepsilon\geq0$, and
\beqs
\Nc_\varepsilon(x,p) = \left\{ a\in A~:~ \left|N^a(x,p)\right|\leq \varepsilon \right\} ~ \text{ and }~ N^a(x,p) = \sigma_{Y}(x,a)-\sigma(x,a)^\top p
\enqs
 for $x,p\in \R^d$. Observe that $(\Nc_\varepsilon)_{\eps\geq 0}$ is non-decreasing so that
\beq\label{main:F*_liminf_F0}
F_*(\Theta) = \liminf_{\Theta'\to \Theta} F_0(\Theta')
\enq

Since some $\Nc_\varepsilon(x,p)$ may be empty, we shall use the standard convention $\sup \emptyset = -\infty$ all over this paper.
For ease of notations, we also write $F \varphi(t,x)$ in place of $F(x,\varphi(t,x),D  \varphi(t,x),D^2_x \varphi(t,x))$ for a regular function $\varphi$. We similarly use the notations $F^* \varphi$ and $F_* \varphi$.

As the value function may not be regular, we use the framework of discontinuous viscosity solutions.
To this end, we define the lower- and upper-semicontinuous envelopes $f_*$ and $f^*$  of a locally bounded function $f:[0,T]\times \R^d\times \Ic \to \R$ by
\beq\label{def-env}
f^*_i(t,x) = \limsup_{\begin{tiny}\begin{array}{c}
     (t',x')\rightarrow (t,x) \\
     t'<T
\end{array}\end{tiny}}f_i(t',x') & \text{ and } &
f_{i,*}(t,x) = \liminf_{\begin{tiny}\begin{array}{c}
     (t',x')\rightarrow (t,x) \\
     t'<T
\end{array}\end{tiny}}f_i(t',x') ~
\enq
for $(t,x,i)\in[0,T]\times\R^d\times\Ic$. We are now able to provide the definition of a viscosity solution to \eqref{PDE1}.
\begin{Definition} Let $u:~[0,T]\times \R^d\times \Ic\rightarrow\R$ be a locally bounded function.

\noindent (i)  $u$ is a \emph{viscosity supersolution} to \eqref{PDE1} if for any $(t_0,x_0,i_0)\in[0,T)\times\R^d\times\Ic$ and any $\varphi_i\in C^{1,2}([0,T]\times \mathbb{R}^{d})$ for $i\in\Ic$ and $\bar{\varphi}\in C^0([0,T]\times \mathbb{R}^{d})$ such that
\beqs
\sup_{i\in\Ic}|\varphi_i(t,x)| &\leq& \bar{\varphi}(t,x)~,\quad  \forall (t,x)\in[0,T]\times \mathbb{R}^{d}~,\\
0~=~\left(u_{i_0,*}- \varphi_{i_0}\right)(t_{0},x_{0}) & = & \min_{\Ic\times [0,T]\times \mathbb{R}^{d}}\left(u_{\cdot,*}-\varphi_\cdot\right)~.
\enqs
we have
\beqs
\min \left\{-\partial _t\varphi_{i_0}(t_0,x_0)+ F^*\varphi_{i_0}(t_0,x_0)~; ~\left( \varphi_{i_0} - \sup_{0\leq k<\bar{K}}\varphi_{i_0k}\right)(t_0,x_0)\right\}
& \geq & 0\;.
\enqs
\noindent (ii)  $u$ is a \emph{viscosity subsolution} to \eqref{PDE1} if for any $(t_0,x_0,i_0)\in[0,T)\times\R^d\times\Ic$ and any
$\varphi_i\in C^{1,2}([0,T]\times \mathbb{R}^{d})$ for $i\in\Ic$ and $\bar{\varphi}\in C^0([0,T]\times \mathbb{R}^{d})$ such that
\beqs
\sup_{i\in\Ic}|\varphi_i(t,x)| &\leq& \bar{\varphi}(t,x)~,\quad  \forall (t,x)\in[0,T]\times \mathbb{R}^{d}~,\\
0~=~\left(u_{i_0}^*- \varphi_{i_0})(t_{0},x_{0}\right) & = & \max_{\Ic\times [0,T]\times \mathbb{R}^{d}}\left(u_{\cdot}^*-\varphi_\cdot\right)~.
\enqs
we have
\beqs
\min \left\{-\partial _t\varphi_{i_0}(t_0,x_0)+ F_*\varphi_{i_0}(t_0,x_0)~; ~\left( \varphi_{i_0} - \sup_{0\leq k<\bar{K}}\varphi_{i_0k}\right)(t_0,x_0)\right\}
& \leq & 0\;.
\enqs

\noindent (iii)  $u$ is a \emph{viscosity solution} to \eqref{PDE1} if it is both a viscosity sub and supersolution to \eqref{PDE1}.
\end{Definition}

We notice that the definition of viscosity solution is slightly different from the classical one as  we impose a bound in the label particle $i$ for test functions.

Following \cite{BouchardElieTouzi}, we introduce the a continuity assumption on the kernel that is used to prove the subsolution property.

\vspace{2mm}

\begin{Assumption}\label{Assumption_H_2}
Let $B$ be a subset of $\R^d \times \R^d$ such that $\Nc_0\neq \emptyset$ on $B$. Then, for every $\varepsilon>0$, $(x_0,p_0)\in \text{int}(B)$, and $a_0 \in \Nc_0(x_0,p_0)$, there exists an open neighborhood $B'$ of $(x_0,p_0)$ and a locally Lipschitz map $\hat a$ defined on $B'$ such that $|\hat a(x_0,p_0)- a_0 |  \leq   \varepsilon $ and
\beqs
\hat a(x,p) \in \Nc_0(x,p) ~\text{ for all }(x,p)\in B'~.
\enqs
\end{Assumption}

We are now able to state our result.

\begin{Theorem}\label{theorem:result_PDE}
Suppose that $\bar v$ is locally bounded on $[0,T]\times\R^d\times\Ic$.
\begin{enumerate}[(i)]
    \item Under Assumptions \ref{Assumption_H_0}, the value function $\bar{v}_{}$ is a viscosity supersolution to \eqref{PDE1}
    \item If in addition Assumption \ref{Assumption_H_2} holds, $\bar v$ is a viscosity subsolution to \eqref{PDE1}
\end{enumerate}
\end{Theorem}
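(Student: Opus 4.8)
The strategy is the standard two-step derivation of a dynamic programming PDE from the DPP (Theorem \ref{theorem:DPP}), adapted to the branching/label structure and to the stochastic-target feature (the $\mathcal N$-constraint forcing controls to kill the diffusive mismatch). I would exploit the branching property (Proposition, Eq.~\eqref{branching-property}) throughout: since $\bar v_i(t,x)=v(t,\delta_{(i,x)})$, it suffices to run all arguments with the single-atom initial condition $\mu=\delta_{(i_0,x_0)}$, so the DPP reads, for $\theta\in\mathcal T_{[t,T]}$, that $\bar v_{i_0}(t,x)$ is the infimum of $y$ for which some $\alpha$ achieves $Y^{t,\delta_{(i_0,x,y)},\alpha,i}_\theta\ge \bar v_i(\theta,X^{t,\delta_{(i_0,x)},\alpha,i}_\theta)$ for all $i\in\mathcal V_\theta$. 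One also records the "geometric" reformulation: the reachability set property $\bar v_{i_0}(t,x)=\inf\{y : \exists\alpha,\ \forall i\in\mathcal V_\theta,\ Y^i_\theta\ge\bar v_i(\theta,X^i_\theta)\}$, and that $y>\bar v_{i_0}(t,x)$ implies existence of such an $\alpha$ (strict inequality handled by the strict monotonicity of the $Y$-flow in its initial value, as in Proposition \ref{monotonicity_property}).

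\textbf{Supersolution (i).} Fix $(t_0,x_0,i_0)$ and test functions $\varphi_i\in C^{1,2}$, $\bar\varphi\in C^0$ with $\sup_i|\varphi_i|\le\bar\varphi$ and $0=(\bar v_{i_0,*}-\varphi_{i_0})(t_0,x_0)=\min_{\Ic\times[0,T]\times\R^d}(\bar v_{\cdot,*}-\varphi_\cdot)$. Take a sequence $(t_n,x_n)\to(t_0,x_0)$ with $\bar v_{i_0}(t_n,x_n)\to\bar v_{i_0,*}(t_0,x_0)$, and pick $y_n\downarrow\bar v_{i_0}(t_n,x_n)$ slightly above, so that a control $\alpha^n$ satisfies the $\theta$-constraint. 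Two sub-claims give the $\min\{\cdot;\cdot\}\ge0$:
\emph{(a) the obstacle term.} Use $\theta=\tau_{i_0}\wedge(t_n+h)$ for small $h$: on the event that the particle $i_0$ branches into $k$ children before $t_n+h$ (which has positive probability $\sim\gamma p_k h$), the constraint gives $y_n\gtrsim Y^{i_0}_{\tau}\ge\bar v_{i_0\ell}(\tau,X_\tau)$ for $\ell=0,\dots,k-1$; the $Y$-position at the branch time equals $Y^{i_0}$ up to an $O(\sqrt h)$ error, and $X$ likewise, so letting $h\to0$ and $n\to\infty$, using lower semicontinuity and $\bar v_{i_0\ell,*}\ge\varphi_{i_0\ell}$, one gets $\varphi_{i_0}(t_0,x_0)\ge\varphi_{i_0\ell}(t_0,x_0)$ for each $\ell<\bar K$, i.e. $\varphi_{i_0}-\sup_{0\le k<\bar K}\varphi_{i_0k}\ge0$ at $(t_0,x_0)$.
\emph{(b) the PDE term.} Use $\theta=\tau_{i_0}\wedge(t_n+h)$ again but now on the complementary event (no branching on $[t_n,t_n+h]$, probability $\to1$): there the single diffusion $(\hat X^{i_0})=(X^{i_0},Y^{i_0})$ runs with generator $\hat L^{a}$; combining $Y^{i_0}_\theta\ge\bar v_{i_0}(\theta,X^{i_0}_\theta)\ge\varphi_{i_0}(\theta,X^{i_0}_\theta)$ with $Y^{i_0}_{t_n}=y_n\to\varphi_{i_0}(t_0,x_0)$, applying Itô to $\varphi_{i_0}(s,X^{i_0}_s)$, dividing by $h$ and sending $h\to0,n\to\infty$ along a suitable sub-limit of the controls, one obtains that there is (approximately) a control $a$ with $\sigma_Y(x_0,a)-\sigma(x_0,a)^\top D\varphi_{i_0}=0$, i.e. $a\in\mathcal N_0(x_0,D\varphi_{i_0})$, and $\lambda_Y(x_0,\varphi_{i_0},a)-\lambda(x_0,a)^\top D\varphi_{i_0}-\tfrac12\mathrm{Tr}(\sigma\sigma^\top(x_0,a)D^2\varphi_{i_0})\le0$ for all such $a$; after passing to the relaxed upper semilimit this reads $-\partial_t\varphi_{i_0}+F^*\varphi_{i_0}\ge0$ at $(t_0,x_0)$. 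The careful point (as in \cite{BouchardElieTouzi,SonerTouziSIAM}) is that the target constraint holds $\P$-a.s., which forces the diffusive part of $Y^{i_0}-\varphi_{i_0}(\cdot,X^{i_0})$ to vanish — this is exactly what restricts the supremum in $F$ to $\mathcal N$; one makes this rigorous by a support/moment argument on the stochastic integral, choosing test perturbations $\varphi_{i_0}+\eta\psi$.

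\textbf{Subsolution (ii).} Argue by contradiction: suppose at $(t_0,x_0,i_0)$, with $0=(\bar v_{i_0}^*-\varphi_{i_0})(t_0,x_0)$ a max, that $\min\{-\partial_t\varphi_{i_0}+F_*\varphi_{i_0};\ \varphi_{i_0}-\sup_{k<\bar K}\varphi_{i_0k}\}(t_0,x_0)>0$. By \eqref{main:F*_liminf_F0}, $F_*\varphi_{i_0}=\liminf F_0\varphi_{i_0}$, so $F_0\varphi_{i_0}>0$ and the obstacle is strict in a neighborhood. Since $F_0>-\infty$ near $(t_0,x_0)$, $\mathcal N_0$ is nonempty there, and Assumption \ref{Assumption_H_2} provides a locally Lipschitz selector $\hat a(x,p)\in\mathcal N_0(x,p)$; feeding $\hat a(X_s,D\varphi_{i_0}(s,X_s))$ into the $\hat X^{i_0}$-dynamics yields a $Y^{i_0}$ that, by Itô and $F_0\varphi_{i_0}>0$, stays strictly above $\varphi_{i_0}(\cdot,X^{i_0})$ hence above $\bar v_{i_0}$, starting from $y=\varphi_{i_0}(t_0,x_0)-\delta$ for small $\delta>0$; the strict obstacle inequality simultaneously ensures the children's constraints $Y^{i_0}_\tau\ge\bar v_{i_0\ell}(\tau,X_\tau)$ hold at any branch time (since $\varphi_{i_0}>\varphi_{i_0k}\ge\bar v_{i_0k,*}$... here one must be a touch careful: I would instead compare with $\bar v_{i_0k}^*$ using that $i_0k$ is a \emph{new} particle so one restarts the value-function bound). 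Running this control until the first branch time and then applying the DPP (Theorem \ref{theorem:DPP}) at $\theta=\tau_{i_0}\wedge(t_0+h)$ shows $y\in$ the reachability set, so $\bar v_{i_0}(t_0,x_0)\le\varphi_{i_0}(t_0,x_0)-\delta<\varphi_{i_0}(t_0,x_0)=\bar v_{i_0}^*(t_0,x_0)$, contradicting the maximum (one needs $\bar v_{i_0}(t_0,x_0)=\bar v_{i_0}^*(t_0,x_0)$ along a recovering sequence — handle by localizing the contradiction at nearby points $(t_n,x_n)$ where $\bar v_{i_0}\to\bar v_{i_0}^*$ and $\varphi_{i_0}$ still strictly dominates).

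\textbf{Main obstacle.} The genuinely delicate part is the interaction, inside one time step $h$, between the \emph{branching} (a positive-probability jump producing several children, each with its own restarted target constraint) and the \emph{a.s. stochastic-target} constraint that degenerates the diffusion. Concretely: in the supersolution proof one must simultaneously extract, on the no-branch event of probability $1-O(h)$, the $\mathcal N$-restriction and the $F^*$-inequality, while on the branch event of probability $O(h)$ extracting the obstacle inequality; the clean separation of these two events and the uniform (in $n$, in the control) estimates $Y^{i_0}_\theta=y_n+O(\sqrt h)$, $X^{i_0}_\theta=x_n+O(\sqrt h)$ (using the non-explosion and moment bounds of Proposition \ref{well-pose-branch-pop} and the Lipschitz/growth Assumption \ref{Assumption_H_0}) are what make the limit $h\to0$ legitimate. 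On the subsolution side, the corresponding difficulty is to construct one admissible control that is \emph{simultaneously} super-replicating on the diffusive part (via the Lipschitz selector of Assumption \ref{Assumption_H_2}, so that existence/stability of the SDE holds) and compatible with \emph{every} possible offspring constraint at the branch time; here the strict positivity of the obstacle term $\varphi_{i_0}-\sup_{k<\bar K}\varphi_{i_0k}$ near $(t_0,x_0)$ is precisely the slack one spends. I would treat the label variable throughout by fixing the finitely many relevant labels $i_0,i_00,\dots,i_0(\bar K-1)$ and invoking the uniform bound $\sup_i|\varphi_i|\le\bar\varphi$ only to control tails when, e.g., taking limits of $Z_\theta$.
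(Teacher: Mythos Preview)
Your overall architecture (use the single-atom DPP with the first branching time as stopping time, handle the obstacle term on the branching event and the PDE term on the no-branch event, and for the subsolution argue by contradiction via the Lipschitz selector of Assumption \ref{Assumption_H_2}) matches the paper's approach. The subsolution argument you sketch is essentially what the paper does, including using the strict positivity of the obstacle term $\varphi_{i_0}-\sup_{k<\bar K}\varphi_{i_0k}$ as slack to super-replicate at the branch time (cf.~\eqref{subsol:jump_geq_eta} and \eqref{subsol:inegbranches}).

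There is, however, a genuine gap in your supersolution argument, in part (b). You propose to apply It\^o to $Y^{i_0}-\varphi_{i_0}(\cdot,X^{i_0})$ on the no-branch event, ``divide by $h$ and send $h\to0,n\to\infty$ along a suitable sub-limit of the controls'', and then invoke ``a support/moment argument on the stochastic integral''. This direct approach does not go through: the $\alpha^n$ are arbitrary and there is no compactness in $\Ac$ to extract a limiting control in $\Nc_0$; moreover the sign conclusion you state (``$\lambda_Y-\dots\le0$ for all such $a$'') is not what is needed for $-\partial_t\varphi_{i_0}+F^*\varphi_{i_0}\ge0$. The paper (and the references \cite{SonerTouziSIAM,BouchardElieTouzi} you cite) instead argue by contradiction: assuming $-\partial_t\varphi_{i_0}+F^*\varphi_{i_0}\le -2\eta$ at $(t_0,x_0)$, one has \eqref{supersol:geq_eta}, which means that on the random set $A_n$ where the drift of $Y^{i_0}-\varphi_{i_0}(\cdot,X^{i_0})$ is ``good'' ($>-\eta$), the diffusion mismatch $\psi^n=N^{\alpha^n}(X^{n,i_0},D\varphi_{i_0})$ is bounded below ($|\psi^n|>\varepsilon$). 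This lower bound is precisely what allows one to define the exponential martingale $L^n$ with drift $-b^n|\psi^n|^{-2}\psi^n\1_{A_n}$ and perform a Girsanov change of measure that removes the bad drift; the contradiction then comes from the strict parabolic-boundary gap $\zeta>0$ in \eqref{supersol:parabolic_min} versus $\beta_n\to0$. The branching is handled by stopping at $\tau_n^r$ (the first jump of $Q^{i_0}$) and noting that under the new measure $\P^n$ the law of $\tau_n^r$ is unchanged, so $\P^n(\tau_n^r>\varepsilon)=e^{-\gamma\varepsilon}>0$. Without this Girsanov step there is no mechanism to force the control into $\Nc_\varepsilon$ and the argument breaks.

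A smaller point: for the obstacle term, the paper does not rely on moment bounds $Y^{i_0}_\theta=y_n+O(\sqrt h)$; it rewrites the constraint at $\theta_n$ via the compensator of $Q^{i_0}$ (so that $\int_0^T\E[\1_{s\le\theta_n}\1_{Y^{n,i_0}_s<\max_\ell\varphi_{i_0\ell}(s,X^{n,i_0}_s)}]\,ds=0$ for each $k$ with $p_k>0$), deduces the inequality at Lebesgue-a.e.\ $s$, and then lets $s\downarrow t_n$ by Fatou and trajectory continuity. This is cleaner and avoids uniform-in-control small-time estimates.
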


\subsubsection{Terminal condition}

To get a complete characterisation of the function $\bar v$, we need to add a terminal equation to \eqref{PDE1}. By the definition of the stochastic target problem, we have
\beq\label{NatTermCond}
\bar v_i(T,x) = g_i(x)
\enq
 for every $(x,i)\in\R^d\times\Ic$.
The possible discontinuities of $\bar v$ might imply that $\bar v_{*}$ and $\bar v^*$ do not agree with the  boundary condition \eqref{NatTermCond}. To get the proper terminal condition, we introduce the set-valued map
\beqs
\mathbf{N}(x,p) = \{r \in\R^m~:~r=N^a(x,p) ~\text{ for some }a \in A\}
\enqs
together with the signed distance function from its complement set $\mathbf{N}^c$ to the origin
\beqs
\delta = \text{dist}(0,\mathbf{N}^c) - \text{dist}(0,\mathbf{N})~,
\enqs
where dist stands for the Euclidean distance. Then,
\beq\label{eq:delta>0}
0\in \text{int} \mathbf{N}(x,p)  &  \Leftrightarrow  & \delta(x,p)>0
~.
\enq
For simplicity of notations, we will write $\delta\varphi(x)$ for $\delta(x, D  \varphi(x))$ for a regular function $\varphi$.
Then, the terminal condition takes the following form
\beq\label{PDE2}
\min \left\{~\bar{v}_i(T,x)-g_i(x)~;~\delta\bar{v}_i(T,x) ~;~ \left(\bar v_{i} - \sup_{0\leq k<\bar{K}}\bar v_{ik}\right)(T,x)~\right\}& = & 0
\enq
for $(x,i)\in\R^d\times\Ic$.

We give the definition of a viscosity solution to \eqref{PDE2}. We recall that the definitions of the envelopes $u^*$ and $u_*$ of a locally bounded function $u$ are given by \eqref{def-env}.

\begin{Definition}\label{def-viscosolT} Let $u:[0,T]\times \R^d\times \Ic\rightarrow\R$ be a locally bounded function.

\noindent (i)  $u$ is a \emph{viscosity supersolution} to \eqref{PDE2} if for any $(x_0,i_0)\in\R^d\times\Ic$ and any
$\varphi_i\in C^{2}( \mathbb{R}^{d})$ for $i\in\Ic$ and $\bar{\varphi}\in C^0(\mathbb{R}^{d})$ such that
\beqs
\sup_{i\in\Ic}|\varphi_i(x)| &\leq& \bar{\varphi}(x)~,\quad  \forall x\in \mathbb{R}^{d}~,\\
0~=~u_{i_0}^*(T,x_0)- \varphi_{i_0}(x_0) & = & \min_{\Ic\times\mathbb{R}^{d}}(u^{*}_{\cdot}(T,\cdot)-\varphi_{\cdot})
\enqs
we have
\beqs
\min\left\{~\varphi_{i_0}(x)-g_{i_0}(x_0)~;~\delta_*\varphi_{i_0}(x_0)~;~  \varphi_{i_0}(T,x_0) - \sup_{0\leq k<\bar{K}}\varphi_{i_0k}(T,x_0)~\right\} & \geq & 0\;.
\enqs
\noindent (ii)  $u$ is a \emph{viscosity subrsolution} solution to \eqref{PDE2} if for any $(x_0,i_0)\in\R^d\times\Ic$ and any
$\varphi_i\in C^{2}( \mathbb{R}^{d})$ for $i\in\Ic$ and $\bar{\varphi}\in C^0(\mathbb{R}^{d})$ such that
\beqs
\sup_{i\in\Ic}|\varphi_i(x)| &\leq& \bar{\varphi}(x)~,\quad  \forall x\in \mathbb{R}^{d}~,\\
0~=~u_{i_0,*}(T,x_0)- \varphi_{i_0}(x_0) & = & \max_{\Ic\times\mathbb{R}^{d}}(u_{\cdot,*}(T,\cdot)-\varphi_\cdot)~
\enqs
we have
\beqs
\min\left\{\left(\varphi_{i_0}(x)-g_{i}^{}(x)\right)\1_{F^*\varphi_{i_0}(x)<\infty};~\delta^*\varphi_{i_0}(x)~;~ \varphi_{i_0}(T,x) - \sup_{0\leq k<\bar{K}}\varphi_{i_0k}(T,x)~\right\} & \leq & 0\;.
\enqs

\noindent (iii)  $u$ is a \emph{viscosity solution} to \eqref{PDE2} if it is both a viscosity sub and supersolution to \eqref{PDE2}.
\end{Definition}
The terminal viscosity property is stated as follows.

\begin{Theorem}\label{theorem:result_PDET}
Suppose that $\bar v$ is locally bounded on $[0,T]\times\R^d\times\Ic$.
\begin{enumerate}[(i)]
    \item Under Assumptions \ref{Assumption_H_0} and \ref{Assumption_H_1}, $\bar v$  is a viscosity supersolution to \eqref{PDE2}.
    \item  If in addition Assumption \ref{Assumption_H_2} holds, $\bar v$
    is a viscosity subsolution to \eqref{PDE2}.
\end{enumerate}
\end{Theorem}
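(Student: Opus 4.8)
The plan is to mimic the terminal-condition analysis for stochastic target problems in \cite{BouchardElieTouzi}, adapted to the branching/label structure. The supersolution part (i) is the easier half. Fix $(x_0,i_0)$ and test functions $\varphi_i,\bar\varphi$ with $0=(\bar v^*_{i_0}-\varphi_{i_0})(T,x_0)=\min_{\Ic\times\R^d}(\bar v^*_\cdot(T,\cdot)-\varphi_\cdot)$. The inequality $\varphi_{i_0}(x_0)\geq g_{i_0}(x_0)$ is immediate from $\bar v_{i_0}(T,\cdot)=g_{i_0}$ (the natural terminal condition \eqref{NatTermCond}) together with lower semicontinuity of $g_{i_0}$ (Assumption \ref{Assumption_H_1}). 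The branching inequality $\varphi_{i_0}(T,x_0)-\sup_{0\le k<\bar K}\varphi_{i_0k}(T,x_0)\geq0$ follows from the branching/monotonicity structure: by the branching property and Proposition \ref{monotonicity_property} one has $\bar v_{i_0}(T,x_0)\geq \bar v_{i_0k}(T,x_0)$ for every offspring label, then pass to the envelopes using the minimality of $(\bar v^*_\cdot-\varphi_\cdot)$ at $(i_0,x_0)$, exactly as in the parabolic-interior proof of Theorem \ref{theorem:result_PDE}. The term $\delta_*\varphi_{i_0}(x_0)\geq0$ is the substantive one: it rephrases the fact that for $y$ slightly below $\bar v_{i_0}(t,x)$ there is no admissible control, which by the dynamic programming principle (Theorem \ref{theorem:DPP}) and a short-time expansion forces $0\in\mathrm{int}\,\mathbf N(x_0,D\varphi_{i_0}(x_0))$; one argues by contradiction, assuming $\delta_*\varphi_{i_0}(x_0)<0$, constructs along a sequence $(t_n,x_n)\to(T,x_0)$ an admissible control that reaches the target from an initial value strictly below $\varphi_{i_0}$, contradicting minimality. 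This is the standard "geometric dynamic programming" argument localized to a single branch and frozen at the branching-free regime (the Poisson clock does not ring on $[t_n,T]$ with probability bounded away from $0$).

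For the subsolution part (ii) we again fix $(x_0,i_0)$ and test functions with $0=(\bar v_{i_0,*}-\varphi_{i_0})(T,x_0)=\max_{\Ic\times\R^d}(\bar v_{\cdot,*}(T,\cdot)-\varphi_\cdot)$, and must show the min of the three terms is $\le0$. We argue by contradiction: suppose all three are $>0$. From $\delta^*\varphi_{i_0}(x_0)>0$ we get $0\in\mathrm{int}\,\mathbf N(x,D\varphi_{i_0}(x))$ in a neighborhood, hence by Assumption \ref{Assumption_H_2} a locally Lipschitz selector $\hat a(x,p)\in\Nc_0(x,p)$; from the branching term $>0$ we get $\varphi_{i_0}>\sup_{k}\varphi_{i_0k}$ near $(T,x_0)$, so the offspring constraints are slack; from $(\varphi_{i_0}-g_{i_0})\mathbf 1_{F^*\varphi_{i_0}<\infty}>0$ — together with finiteness of $F^*\varphi_{i_0}(x_0)$, which holds precisely because $\Nc_0\ne\emptyset$ locally — we get $\varphi_{i_0}>g_{i_0}$ near $x_0$. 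Using the Lipschitz selector to kill the diffusive mismatch $N^{\hat a}=0$ and a comparison/ODE argument as in Proposition \ref{monotonicity_property} for the drift gap, one builds, for $(t,x)$ in a parabolic neighborhood of $(T,x_0)$, an admissible control steering $Y$ from an initial value $y<\varphi_{i_0}(t,x)$ so that at time $T$ the terminal inequality $Y_T\ge g_{i_0}(X_T)$ holds (using continuity of $g_{i_0}$ and $\varphi_{i_0}>g_{i_0}$), while the offspring constraints of the DPP remain satisfied on $[t,T]$ by slackness. This shows $y\in\Rc$, hence $\bar v_{i_0}(t,x)\le y<\varphi_{i_0}(t,x)$ along a sequence $(t_n,x_n)\to(T,x_0)$; passing to the liminf contradicts $\bar v_{i_0,*}(T,x_0)=\varphi_{i_0}(x_0)$.

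The main obstacle is the subsolution construction in part (ii): one must build a genuinely admissible control on the full interval $[t,T]$ — not just on the branching-free event — that handles the branching events while keeping all offspring constraints of the DPP satisfied. The clean way is to work on a small time window $[t,T]$ with $T-t$ small so that (a) on the no-branching event (probability $\ge 1-\gamma(T-t)+o(T-t)$) the single branch $i_0$ is steered to beat $g_{i_0}$ using the Lipschitz selector $\hat a$, and (b) on the branching event one invokes the DPP terminal constraint $Y_\theta\ge v(\theta,\delta_{(i_0,X_\theta)})$ at the branching time rather than the constraint at $T$ — so slackness of $\varphi_{i_0}$ over its offspring values, propagated by continuity, guarantees the DPP inequality can still be met. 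Making the neighborhoods, the selector domain $B'$, and the time window consistent (all the $\varepsilon$'s and the Lipschitz constants must be tied together uniformly in the label, which is where the continuous bound $\bar\varphi$ in the test-function class is essential) is the delicate bookkeeping; everything else is routine Itô estimation and Gronwall, as in Proposition \ref{prop-stabBD} and the comparison argument of Proposition \ref{monotonicity_property}.
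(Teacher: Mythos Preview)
Your plan has a structural gap that affects both halves. The envelopes at the terminal time are defined in \eqref{def-env} as $\liminf$/$\limsup$ over $(t',x')\to(T,x)$ with $t'<T$; they do \emph{not} see the value $\bar v_i(T,\cdot)=g_i$. So the step you call ``immediate'' --- that $\varphi_{i_0}(x_0)=\bar v_{i_0,*}(T,x_0)\ge g_{i_0}(x_0)$ follows from \eqref{NatTermCond} --- is precisely the nontrivial content of the terminal condition. Nothing prevents $\bar v_{i_0,*}(T,x_0)<g_{i_0}(x_0)$ a priori, and likewise your branching inequality at $T$ cannot be read off from the pointwise relation $\bar v_{i_0}(T,\cdot)\ge\bar v_{i_0k}(T,\cdot)$. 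The same issue undermines your direct construction in (ii): the terminal test functions $\varphi_i\in C^2(\R^d)$ give you $\bar v_i^*(T,\cdot)\le\varphi_i$ only at $t=T$, so at a branching or exit time $\theta<T$ you have no usable bound $\bar v_{i_0\ell}(\theta,X_\theta)\le\varphi_{i_0\ell}(X_\theta)$, and the DPP inequality you want to check there is not controlled.

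The paper's route is different in spirit: it does \emph{not} argue directly at $T$, but transports the already–proven parabolic-interior viscosity properties (Theorem \ref{theorem:result_PDE}) to the boundary via time-penalized auxiliary test functions. For the supersolution branching term it uses $\varphi_{n,i}(t,x)=\varphi_i(x)-\tfrac12|x-x_0|^2+\frac{T-t}{(T-s_n)^2}$ to force the minimizer of $\bar v_{i_0,*}-\varphi_{n,i_0}$ into $\{t<T\}$ and then invokes the interior supersolution inequality; for $\varphi_{i_0}\ge g_{i_0}$ it adds a linear-in-time term $2C(t-T)$ and reruns the Step~3 contradiction of Section \ref{supersol_section}. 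For the subsolution it introduces $\tilde\varphi_i(t,x)=\varphi_i(x)+\sqrt{T-t}$, whose exploding $-\partial_t$ makes the drift estimate \eqref{subsol_terminal:infinitesimal} hold automatically near $T$, and then reduces to the interior contradiction of Section \ref{subsubsection:visc_subsol} (which already handles branching via the stopping time $\tau_n^r$). Your outline has none of these devices; without them the Itô/drift bookkeeping you describe does not close, because a bounded drift over $[t,T]$ with $T-t$ small cannot compensate for the lack of control on $\bar v$ at intermediate times.
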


\subsection{Viscosity properties on \texorpdfstring{$[0,T)\times\R^d\times\Ic$}{f} }

\subsubsection{Viscosity supersolution property}\label{supersol_section}

Fix $(i_0,t_{0},x_{0})\in\Ic\times[0,T)\times \mathbb{R}^{d}$ and let $\varphi\in C^0([0,T]\times\R^d)$ and $\varphi_i\in C^{1,2}([0,T]\times \mathbb{R}^{d})$ for $i\in \Ic$ be such that
\beq\label{cond-domphi}
\sup_i|\varphi_i| & \leq & \varphi
\enq
and
\beq\label{condMin}
0=\left(\bar{v}_{i_0,*}- \varphi_{i_0}\right)(t_{0},x_{0})=\min_{(i,t,x)\in \Ic\times[0,T]\times \mathbb{R}^{d}}\left(\bar{v}_{i,*}-\varphi_{i}\right)(t,x)~.
\enq
Without loss of generality we can assume this minimum to be strict in $(t,x)$ once fixed $i_0$.

\paragraph{Step 1.} We first prove that $\varphi_{i_0}(t_0,x_0) - \sup_{0\leq \ell\leq k-1} \varphi_{i_0k}(t_0,x_0)\geq 0$ for any $k$ such that $p_k>0$.
 Let $(t_{n}, x_{n})$ be a sequence in $[0,T]\times \mathbb{R}^{d}$ such that
\beqs
(t_{n}, x_{n})\rightarrow(t_{0},x_{0})~\text{ and }~\bar{v}_{i_0}(t_{n},x_{n})\rightarrow \bar{v}_{i_0,*}(t_{0},x_{0})~\text{ as }n\rightarrow\infty.
\enqs
Set $y_{0}:=\varphi_{i_0}(t_0,x_0)$, $\hat{x}_{0}:=(x_{0}, y_{0})$, $y_{n}:=\bar{v}_{i_0}(t_{n},x_{n})+1/n$ and $\hat{x}_{n}:=(x_{n},y_{n})$.
Define the stopping time $\theta_n=\inf\{ s \geq t_n~:~Q^{i_0}((t_n,s]\times\N)\geq 1\}$ and the random variable $k_n$ such that $Q^{i_0}((t_n,\theta_n]\times\{ k_n\})=1$. From Theorem \ref{theorem:DPP}, the continuity of the trajectories and since $y_n>\bar v_{i_n}(t_n,x_n)$ there exists $\alpha^n\in \Ac$ such that
\beqs
Y^{t_n,\delta_{(i_0,\hat x_n)},\alpha^n,i_0}_{\theta_n-} & \geq & \max_{0\leq \ell\leq k_n-1}\bar v_{i_0 \ell}\left(\theta_n,X^{t_n,\delta_{(i_0,x_n)},\alpha^n,i_0}_{\theta_n-}\right)~ \geq ~ \max_{0\leq \ell\leq k_n-1}\varphi_{i_0\ell}\left(\theta_n,X^{t_n,\delta_{(i_0,x_n)},\alpha^n,i_0}_{\theta_n}\right)\;.
\enqs
on $\{\theta_n\leq T\}$.  To alleviate the notation, we shall denote $X^{n,i}_t := {X}^{t_n,\delta_{(i_0,{x}_n)}, \alpha^n, i}_t$ and $Y^{n,i}_t := {Y}^{t_n,\delta_{(i_0,{x}_n)},y_n, \alpha^n, i}_t$ for $n\geq 1$ and $t\in[t_n,T]$.
Therefore, we get
\beqs
\gamma\sum_{k=0}^{+\infty}p_k\int_0^T \E\left[\mathds{1}_{s\leq \theta_n\leq T} \mathds{1}_{Y^{n,i_0}_{s}< \max_{0\leq \ell\leq k-1} \varphi_{i_0\ell} \left(s,X^{n,i_0}_{s}\right)}  \right]\d s & = & 0\;,
\enqs
which means
\beqs
\int_0^T \E\left[ \mathds{1}_{s\leq \theta_n\leq T}\mathds{1}_{Y^{n,i_0}_{s}< \max_{0\leq \ell\leq k-1} \varphi_{i_0\ell}\left(s,X^{n,i_0}_{s}\right)} \right]\d s & = & 0
\enqs
for all $k\geq 1$ such that $p_k>0$. We therefore get
\beq\label{condAEs}
\E\left[ \mathds{1}_{s\leq \theta_n\leq T}\mathds{1}_{Y^{n,i_0}_{s}< \max_{0\leq \ell\leq k-1} \varphi_{i_0\ell} \left(s,X^{n,i_0}_{s}\right)} \right] & = & 0
\enq
for Lebesgue almost all $s\in[t_n,T]$.
Since the process $Y^{n,i_0}-\max_{0\leq \ell\leq k-1} \varphi_{i_0\ell} \left(\cdot,X^{n,i_0}\right)$ is continuous and $\P\left(\theta_n\in[t_n,T]\right)>0$, Fatou's Lemma applied to a sequence $(s_k)_k$ converging to $t_n$ and satisfying \eqref{condAEs} gives
\beqs
y_n & \geq & \max_{0\leq \ell\leq k-1} \varphi_{i_0\ell}(t_n,x_n)
\enqs
for all $k\geq 1$ such that $p_k>0$. Sending $n$ to infinity gives the result.

\paragraph{Step 2.} We now prove that
\beqs
-\frac{\partial\varphi_{i_0}}{\partial t}(t_0,x_0) + F^*\varphi_{i_0}(t_0,x_0) & \geq & 0
\enqs
Assume to the contrary that $(-\partial_t\varphi_{i_0}+ F^*\varphi_{i_0})(t_0,x_0) = -2\eta$ for some $\eta > 0$, and let us work towards a contradiction. By definition of $F^*$ , we may find $\varepsilon \in (0,T-t_0)$, such that
\beq
-\partial_t \varphi_{i_0}(t,x) +\lambda_Y(x,y,a) - L^a\varphi_{i_0}(t,x)\leq -\eta \quad \text{ for all }a\in\Nc_\varepsilon(x,D \varphi_{i_0}(t,x))\label{supersol:geq_eta}\\
\text{and } (t,x,y)\in[0,T]\times\R^d\times\R \text{ such that } (t,x)\in B_\varepsilon(t_0,x_0)\text{ and } |y-\varphi_{i_0}(t,x)|\leq \varepsilon~, \nonumber
\enq
where $B_\varepsilon(t_0,x_0)$ denotes the ball of radius $\varepsilon$ around $(t_0,x_0)$. Let $\partial_p B_\varepsilon(t_0,x_0) = \{t_0 + \varepsilon\} \times\text{cl}(B_\varepsilon(t_0,x_0)) \cup [t_0,t_0+  \varepsilon) \times \partial B_\varepsilon (x_0)$ denote the parabolic boundary of $B_\varepsilon(t_0,x_0)$ and observe that
\beq
\zeta = \min_{\partial_p B_\varepsilon(t_0,x_0)}(\bar v_{i_0,*} -\varphi_{i_0}) > 0 \label{supersol:parabolic_min}
\enq
since $(t_0,x_0)$ is a strict minimizer of $\bar v_{i_0,*} -\varphi_{i_0}$ on  $[0,T)\times\R^d$.

\paragraph{Step 3.} We now show that \eqref{supersol:geq_eta} and \eqref{supersol:parabolic_min} lead to a contradiction to \eqref{eq:DPP1}.
Let $(t_{n}, x_{n})$ in $[0,T]\times \mathbb{R}^{d}$ such that
\beqs
(t_{n}, x_{n})\rightarrow(t_{0},x_{0})~\text{ and }~\bar{v}_{i_0}(t_{n},\ x_{n})\rightarrow \bar{v}_{i_0,*}(t_{0},x_{0})~\text{ as }n\rightarrow\infty.
\enqs
We then set $y_{0}:=\varphi_{i_0}(t_{0},x_{0})$, $\hat{x}_{0}:=(x_{0}, y_{0})$, $y_{n}:=\bar{v}_{i_0}(t_{n},x_{n})+1/n$, $\hat{x}_{n}:=(x_{n},y_{n})$, $\beta_{n}:=y_{n}-\varphi_{i_0}(t_{n}, x_{n})$ and notice that
\beq\label{supersol:beta_to_0}
\beta_{n}\rightarrow 0~\text{ as }n\rightarrow\infty~.
\enq
From the definition of the value function and the fact that $y_{n}>\bar{v}_{i_0}(t_{n},\ x_{n})$ for each $n\geq1$, there exists some $\alpha^{n}$ in $\Ac$ such that $Y^{t_n,\delta_{(i_0,{x}_n)},y_n, \alpha^n, i}_T \geq g_i\left(X^{t_n,\delta_{(i_0,{x}_n)}, \alpha^n, i}_T\right)$ for all $i \in \Vc^{t_n,\delta_{(i_0,{x}_n)}, \alpha^n}_T$. To alleviate the notation, we shall denote
\beqs
{X}^{n,i}_t := {X}^{t_n,\delta_{(i_0,{x}_n)}, \alpha^n, i}_t~,~
{Y}^{n,i}_t := {Y}^{t_n,\delta_{(i_0,{x}_n)},y_n, \alpha^n, i}_t~\text{and} ~
\Vc^{n}_t := \Vc^{t_n,\delta_{(i_0,{x}_n)}, \alpha^n}_t~
\enqs
for $n\geq 1$ and $t\in[t_n,T]$.
Define the following stopping times
\beqs
\tau_{n}&:=&\inf\{s\geq t_{n}~:~ \exists i \in \Vc^n_s~,~\left(s,X^{n,i}_s\right)\notin B_\eps(t_0,x_0)\}~,\\
\tau_{n}^\varepsilon&:=&\inf\{s\geq t_{n}~:~ \exists i \in \Vc^n_s~,~|Y^{n,i}_s -\varphi_i\left(s,X^{n,i}_s\right)|\geq\eps\}~,\\
\tau^r_{n}&:=&\inf\{s\geq t_{n}~:~Q^{i_0}((t_n,s]\times \N )=1
~\}~,\\
\theta_n &:=&\tau_{n}\wedge\tau_{n}^\eps
\wedge\tau_{n}^r
~.
\enqs
We also set
\beq\label{supersol:def_A_n}
A_n &= &\left\{s\in[t_n,\theta_n)\;:\;
-\partial_t\varphi_{i_0}(s,{X}^{n,i_0}_s)+ \lambda_Y({X}^{n,i_0}_s,{Y}^{n,i_0}_s,\alpha^n_{i_0}) - L^{\alpha^n_{i_0}}\varphi_{i_0}(s,{X}^{n,i_0}_s)> -\eta\right\}\;,~\qquad  \\
\psi^{n}_s& =& N^{\alpha^n_{i_0}}({X}^{n,i_0}_s,D  \varphi_{i_0}(s,{X}^{n,i_0}_s))\;.\nonumber
\enq
We notice that \eqref{supersol:geq_eta} implies
\beq\label{supersol:psi_geq_eps}
|\psi^{n}_s|&>&\eps \quad \text{ for }s \in A_n~.
\enq
It follows from Theorem \ref{theorem:DPP} that
\beqs
Y^{n, i}_{t\wedge\theta_n}  & \geq &  \bar{v}_i\left(t\wedge\theta_n,X^{n, i}_{t\wedge\theta_n}\right)\quad \forall i \in \Vc^{n}_{t\wedge\theta_n},~t\in[ t_n,T]\;.
\enqs
and since $\bar{v}_{i}\geq \bar{v}_{i,*}\geq\varphi_i$
\beq\label{supersol:Y_geq_phi_i}
Y^{n, i}_{\theta_n\wedge t} \geq \varphi_i\left(\theta_n\wedge t,X^{n, i}_{\theta_n}\wedge t\right)\quad \forall i \in \Vc^{n}_{\theta_n}\;.
\enq
Using the definition of $\zeta$ in \eqref{supersol:parabolic_min} and $\theta_n$, and the continuity of the trajectories, we get
\beqs
Y^{n, i_0}_{t\wedge\theta_n} &\geq& \varphi_{i_0}\left(t\wedge\theta_n,X^{n, i_0}_{t\wedge\theta_n}\right) + (\zeta\1_{\{ \theta_n=\tau_n \}} +
\eps\1_{\{\tau_n^\varepsilon = \theta_n\}\cap\{\theta_n<\tau_n\}}
)\1_{\{\theta_n\leq  t\}\cap\{\theta_n<\tau^r_n\}} \\
&\geq& \varphi_{i_0}\left(t\wedge\theta_n,X^{n, i_0}_{t\wedge\theta_n}\right) + \zeta\wedge\eps\1_{\{\theta_n\leq t\}\cap\{\theta_n< \tau^r_n\}}~.
\enqs
Therefore, from \eqref{supersol:Y_geq_phi_i} and the previous inequality, we have
\beqs
- \zeta\wedge\eps\1_{\{\theta_n>t\}\cup \{\theta_n=\tau^r_n\}} \leq -\zeta\wedge\eps + Y^{n, i}_{t\wedge\theta_n} - \varphi_{i}\left(t\wedge\theta_n,X^{n, i}_{t\wedge\theta_n}\right)
\enqs
Applying the dynamics \eqref{dynhatZ}  of $\hat{Z}^{t_n,\delta_{(i_0,\hat{x}_n)},\alpha^n}_{\cdot}$ to the function $(t,x,y,i)\mapsto y-\varphi_{i_0}(t,x)$, it follows from the definition of $\psi_n$ and $\theta_n$, and \eqref{supersol:def_A_n} that
\beqs\nonumber
- \zeta\wedge\eps\1_{\{\theta_n>t\}\cup \{\theta_n=\tau^r_n\}} &\leq&  \beta_{n} - \zeta\wedge\eps + \int_{t_{n}}^{t\wedge\theta_n}
\psi^n_s {}^\top\d B^{i_0}_u \nonumber\\
&+& \int_{t_{n}}^{t\wedge\theta_n}\left[
-\partial_t\varphi_{i_0}\left(u,X^{n,i_0}_u\right)+
\lambda_{Y}\left(X^{n, i_0}_u, Y^{n, i_0}_u,\alpha^{n,i_0}_u\right) -  L^{\alpha^{n,i_0}_u}\varphi_{i_0}\left(u,X^{n,i_0}_u\right)\right]\d u \nonumber\\
& + & \int_{(t_{n},\theta_n\wedge t]} \sum_{k\geq 0}
\left((k-1)Y^{n, i_0}_u - \left(\sum_{\ell=0}^{k-1}\varphi_{i_0\ell} - \varphi_{i_0} \right)\left(u,X^{n, i_0}_u\right)\right) Q^{i_0}(\d u\d k)\nonumber
\\
&\leq&  \beta_{n}- \zeta\wedge\eps + \int_{t_{n}}^{t\wedge\theta_n}
\psi^n_s {}^\top\d B^{i_0}_u \nonumber\\
&+&\int_{t_{n}}^{t\wedge\theta_n}
\left[ -\partial_t\varphi_{i_0}\left(u,X^{n,i_0}_u\right)+ \lambda_{Y}\left(X^{n, i_0}_u, Y^{n, i_0}_u,\alpha^{n,i_0}_u\right) -  L^{\alpha^{n,i_0}_u}\varphi_{i_0}\left(u,X^{n,i_0}_u\right)\right]\1_{A_n}(u)
\d u \nonumber\\
& + & \int_{(t_{n},\theta_n\wedge t]} \sum_{k\geq 0}
\left((k-1)Y^{n, i_0}_u - \left(\sum_{\ell=0}^{k-1}\varphi_{i_0\ell} - \varphi_{i_0} \right)(u,X^{n, i_0}_u)\right) Q^{i_0}(\d u\d k)~.\nonumber\\
\enqs
We then get
\beq
- \zeta\wedge\eps\1_{\{\theta_n>t\}\cup \{\theta_n=\tau^r_n\}} &\leq& M^{B,n}_{t\wedge\theta_n}  + M^{Q,n}_{t\wedge\theta_n}
~,\label{supersol:infinitesimal_development}
\enq
where
\beqs
M^{B,n}_{s}&=& \beta_{n}- \zeta\wedge\eps +\int_{t_{n}}^{s}b^n_u \d u + \int_{t_{n}}^{s}
\psi^n_s {}^\top\d B^{i_0}_u~,\\
b^n_s &=&
\left[-\partial_t\varphi\left(s,X^{n,i_0}_s\right)+ \lambda_{Y}\left(X^{n, i_0}_s, Y^{n, i_0}_s,\alpha^{n,i_0}_s\right) -  L^{\alpha^{n,i_0}_s}\varphi\left(s,X^{n,i_0}_s\right)\right]\1_{A_n}(s) +\\
&&+ \sum_{k\geq 0}
\left((k-1)Y^{n, i_0}_s - \left(\sum_{\ell=0}^{k-1}\varphi_{i_0\ell} - \varphi_{i_0} \right)\left(s,X^{n, i_0}_s\right)\right) \gamma p_k
~,\\
M^{Q,n}_{s}  &=& \int_{(t_{n},s]} \sum_{k\geq 0}
\left((k-1)Y^{n, i_0}_u - \left(\sum_{\ell=0}^{k-1}\varphi_{i_0\ell} - \varphi_{i_0} \right)(u,X^{n, i_0}_u)\right) \left(Q^{i_0}(\d u\d k) - \gamma p_k \d u \right)
~.
\enqs
for $s\in[t_n,T]$.
From to Step 1, the definition of $\theta_n$, the domination condition \eqref{cond-domphi} and Assumption \ref{Assumption_H_0}, $M^{Q,n}_{\cdot\wedge\theta_n}$ is a pure jump martingale.
Let $L^n$ be the exponential local martingale defined by $L^n_{t_n}=1$ and
\beqs
\d L^n_{s} = -L^n_{s} b^n_s |\psi^n_s|^{-2} \psi^n_s \1_{A_n}(s){}^\top\d B^{i_0}_s
\enqs
for $s\in[t_n,T]$. $L^n$ is well defined by \eqref{supersol:psi_geq_eps}, Assumption \ref{Assumption_H_0} and the definition of the set of admissible controls $\Ac$. Moreover, From the definition of $\theta_n$, $L^n_{\cdot\wedge\theta_n}$ is a martingale. From Girsanov Theorem for jump diffusion processes (see e.g. Theorem 1.35 in \cite{OS2007}) and the definition of $\theta_n$, we get that $L^n_{\cdot\wedge\theta_n}M^{B,n}_{\cdot\wedge\theta_n}+L^n_{\cdot\wedge\theta_n}M^{Q,n}_{\cdot\wedge\theta_n}$ is a martingale.  It follows from \eqref{supersol:infinitesimal_development} that
\beqs
- \zeta\wedge\eps\E[\1_{\{\theta_n=\tau^r_n\}}L^n_{\theta_n}]  & \leq &
\E\left[L^n_{\theta_n}M^{B,n}_{\theta_n} + L^n_{\theta_n}M^{Q,n}_{\theta_n}\right] \\
&\leq&
L^n_{t_n}M^{B,n}_{t_n}+L^n_{t_n}M^{Q,n}_{t_n} = \beta_n - \zeta \wedge \eps\;.
\enqs
Since $L^n_{\cdot\wedge\theta_n}$ is a martingale and $\theta_n$ is a stopping time bounded by $\eps$, we have $\E[L^n_{\theta_n}]=L^n_{t_n}=1$. Therefore, the previous inequality becomes
\beq
\zeta\wedge\eps\E\left[\1_{\{\theta_n<\tau^r_n\}}L^n_{\theta_n}\right] & \leq & \beta_n~. \label{supersol:last_inequality}
\enq
We next define the probability measure on $\Fc_T$ by the Radon-Nikodym derivative
\beqs
\left.\frac{\d \P^n}{d\P}\right|_{\Fc_T} & = & L^n_{\theta_n}
\enqs
and denote by $\E^n$ the expectation under $\P^n$. Using Girsanov Theorem, we notice that $\tau_n ^r$ has the same law under $\P$ and $\P^n$. In particular, we have
\beqs
\E\left[\1_{\{\theta_n<\tau^r_n\}}L^n_{\theta_n}\right] & \geq & \E\left[\1_{\{\tau^r_n>\eps\}}L^n_{\theta_n}\right]~~=~~\E^n\left[\1_{\{\tau^r_n>\eps\}}\right]
~~ = ~~ \E\left[\1_{\{\tau^r_n>\eps\}}\right]~~=~~\exp(-\eps\gamma)\;.
\enqs
Comparing with \eqref{supersol:last_inequality}, we have
\beqs
0 \leq \beta_n - \zeta\wedge\eps~ \exp(-\eps\gamma)~,
\enqs
which contradicts \eqref{supersol:beta_to_0} for $n$ large enough.

\subsubsection{Viscosity subsolution property}\label{subsubsection:visc_subsol}
\paragraph{Step 1.}
Let $\varphi\in C^0([0,T]\times\R^d)$, $\varphi_i\in C^{1,2}([0,T]\times \R^{d})$ for $i\in\Ic$ and $(t_{0},x_{0},i_0)\in [0,T)\times\R^d\times \Ic$ such that
\beqs
\sup_i|\varphi_i| & \leq & \varphi
\enqs
and
\beq\label{subsol:condMax}
0=\left(\bar{v}_{i_0}^*- \varphi_{i_0}\right)(t_{0},x_{0})=\max_{(t,x,i)\in [0,T]\times \mathbb{R}^{d}\times\Ic}\left(\bar{v}_{i}^*-\varphi_{i}\right)(t,x)~.
\enq
Without loss of generality we can assume that the maximum is strict in $(t,x)$ once fixed $i_0$. We  then argue by contradiction and assume that
\beq\label{subsol:4_eta}
4 \eta ~=~ \min\left\{
\left(-\partial_t\varphi_{i_0} + F_*\varphi_{i_0}\right)(t_0,x_0)~;~
\left(\varphi_{i_0} - \sup_{0\leq k<\bar{K}} \varphi_{i_0k}\right)(t_0,x_0)\right\}  > 0~\;.
\enq
By \eqref{main:F*_liminf_F0}, Assumption \ref{Assumption_H_2} and \eqref{subsol:4_eta}
we may find $\eps>0$ such that
\beq
\label{subsol:inside_F_geq_eta}
\rho(t,x,y) = -\partial_t\varphi_{i_0}(t,x) + \lambda_Y(x,y,
\hat{a}(x,D  \varphi_{i_0}(t,x))) -
L^{\hat{a}\left(x,D  \varphi_{i_0}(t,x)\right)}\varphi_{i_0}(t,x) &\geq& \eta ~,
\\
\label{subsol:jump_geq_eta}
\left(\varphi_{i_0} - \sup_{0\leq k<\bar{K}} \varphi_{i_0k}\right)(t,x)&\geq& \eta
\enq
for all $(t,x,y)\in[0,T)\times\R^d\times\R$ such that $(t,x)\in B_\eps(t_0,x_0)$ and $ |y-\varphi_{i_0}(t,x)|\leq \eps$, where $\hat{a}$ is a locally Lipschitz map satisfying
\beq\label{subsol:param_kernel}
\hat{a}(x,D  \varphi_{i_0}(t,x)) \in \Nc_0(x,D  \varphi_{i_0}(t, x)) \text{ on } B_\varepsilon(t_0,x_0)~.
\enq
Observe that, since $(t_0,x_0)$ is a strict maximizer, we have
\beq\label{subsol:parabolic_max_zeta}
- \zeta &  = & \max_{\partial_p B_\eps(t_0,x_0)}(\bar{v}_{i_0}^*- \varphi_{i_0})(t,x) ~~<~~0~,
\enq
where $\partial_p B_\varepsilon(t_0,x_0) = \{t_0+\eps\}\times\text{cl}(B_\eps(t_0,x_0))\cup[t_0,t_0 +\eps)\times\partial B_\varepsilon(t_0,x_0)$ denotes the parabolic boundary of $B_\varepsilon(t_0,x_0)$.

\paragraph{Step 2.}
We now show that \eqref{subsol:inside_F_geq_eta}, \eqref{subsol:jump_geq_eta}, \eqref{subsol:param_kernel} and \eqref{subsol:parabolic_max_zeta} lead to a contradiction to \eqref{eq:DPP1}.
Let $(t_{n}, x_{n})_{n\geq 1}$ be a sequence such that
\beqs
(t_{n},x_{n})\rightarrow(t_{0},x_{0})~\text{ and }\bar{v}_{i_0}(t_{n},x_{n})\rightarrow \bar{v}^{*}_{i_0}(t_{0},x_{0}) & \mbox{ as } & n \rightarrow+\infty\;.
\enqs
 Set $y_{0}:=\varphi_{i_0}(t_{0},x_{0})$, $\hat{x}_{0}:=(x_{0}, y_{0})$  and $y_{n}:=\bar{v}_{i_0}(t_{n},x_{n})-n^{-1}$, $\hat{x}_{n}:=(x_{n},y_{n})$ for $n\geq 1$ and notice that
\beq\label{subsol:beta_to_zero}
\beta_n := y_n-\varphi_{i_0}(t_{n},x_{n}) & \xrightarrow[n\rightarrow+\infty]{} & 0\;.
\enq
Define the following stopping times
\beqs
\tau_{n}&:=&\inf\{s\geq t_{n}~:~ \exists i \in \Vc^n_s~,~\left(s,X^{n,i}_s\right)\notin B_\eps(t_0,x_0)\}~,\\
\tau_{n}^\varepsilon&:=&\inf\{s\geq t_{n}~:~ \exists i \in \Vc^n_s~,~|Y^{n,i}_s -\varphi_{i}\left(s,X^{n,i}_s\right)|\geq\eps\}~,\\
\tau^r_{n}&:=&\inf\{s\geq t_{n}~:~Q^{i_0}((t_n,s]\times \N ) \geq1
~\}~,\\
\theta_n &:=&\tau_{n}\wedge\tau_{n}^\eps
\wedge\tau_{n}^r
~.
\enqs
To alleviate the notations, we shall write
\beqs
{X}^{n,i}_. := {X}^{t_n,\delta_{(i_0,{x}_n)}, \alpha^n, i}_.~,~
{Y}^{n,i}_. := {Y}^{t_n,\delta_{(i_0,{x}_n)},y_n, \alpha^n, i}_.~,~\hat{X}^{n,i}_\cdot = \left({X}^{n,i}_\cdot,{Y}^{n,i}_\cdot\right)\\
\hat{Z}^{n}_\cdot =\hat{Z}^{t_n,\delta_{(i_0,\hat{x}_{n})}, \hat{\alpha}^n}_\cdot
~\text{ and }~
\Vc^{n}_\cdot =\Vc^{t_n,\delta_{(i_0,{x}_{n})}, \hat{\alpha}^n}_\cdot
\enqs
where  $\hat{\alpha}^{n}$ is the feedback control process given by
$\hat{\alpha}^{n,i}_\cdot=\hat{a}( {X}^{n,i}_\cdot,D  \varphi_{i_0}(\cdot, {X}^{n,i}_\cdot))$ defined on $[t_n,\theta_n)$ for $n\geq 1$. Since $\hat{a}$ is locally Lipschitz, this solution is well-defined.
Since $\bar{v}_{i} \leq \bar{v}_{i}^* \leq \varphi_{i}$, we
then deduce from \eqref{subsol:parabolic_max_zeta} and the definition of $\theta_n$ that on $\{\theta_n< \tau^{r}_n\}$ we have
\beqs
Y^{n,i_0}_{\theta_n} - \bar{v}_{i_0}\left(\theta_n, X^{n,i_0}_{\theta_n}\right) &\geq&
\1_{\{\theta_n= \tau^\eps_{n}\}}
\left(Y^{n,i_0}_{\theta_n} - \varphi_{i_0}\left(\theta_n, X^{n,i_0}_{\theta_n}\right)\right) \\
&&\qquad
+ \1_{\{\theta_n= \tau_{n}\}}
\left(Y^{n,i_0}_{\theta_n} - \bar{v}_{i_0}^*\left(\theta_n, X^{n,i_0}_{\theta_n}\right)\right)\\
&=&\eps\1_{\{\theta_n= \tau^\eps_{n}\}} + \1_{\{\theta_n= \tau_{n}<\tau_n^\eps\}}
\left(Y^{n,i_0}_{\theta_n} - \bar{v}_{i_0}^*\left(\theta_n, X^{n,i_0}_{\theta_n}\right)\right)\\
&\geq& \eps\1_{\{\theta_n= \tau_{n}^\eps\}} + \1_{\{\theta_n= \tau_{n}<\tau_n^\eps\}}
\left(Y^{n,i_0}_{\theta_n}+\zeta - \varphi_{i_0}\left(\theta_n, X^{n,i_0}_{\theta_n}\right)\right)
\\
&\geq& \eps\wedge\zeta+ \1_{\{\theta_n= \tau_{n}<\tau_n^\eps\}}
\left(Y^{n,i_0}_{\theta_n}- \varphi_{i_0}\left(\theta_n, X^{n,i_0}_{\theta_n}\right)\right)~.
\enqs
Secondly, on $\{\theta_n= \tau^{r}_n\}$, using the continuity of the trajectories of the particles $Y^{i_0\ell}_{\theta_n} = Y^{i_0}_{\theta_n}$ and $X^{i_0\ell}_{\theta_n} = X^{i_0}_{\theta_n}$ for all $i_0\ell \in \Vc^n_{\theta_n}$, we have
\beqs
Y^{n,i_0\ell}_{\theta_n}-\varphi_{i_0\ell}\left(\theta_n,X^{n,i_0\ell}_{\theta_n}\right)&=&
Y^{n,i_0}_{\tau^{r}_n}-\varphi_{i_0}\left(\tau^{r}_n,X^{n,i_0}_{\tau^{r}_n}\right)+\varphi_{i_0}\left(\tau^{r}_n,X^{n,i_0}_{\tau^{r}_n}\right)-\varphi_{i_0\ell}\left(\tau^{r}_n,X^{n,i_0 }_{\tau^{r}_n}\right)~,
\enqs
and from \eqref{subsol:jump_geq_eta},
\beq\label{subsol:inegbranches}
Y^{n,i_0\ell}_{\theta_n}-\varphi_{i_0\ell}\left(\theta_n,X^{n,i_0\ell}_{\theta_n}\right)&\geq&
Y^{n,i_0}_{\theta_n}-\varphi_{i_0}\left(\theta_n,X^{n,i_0}_{\theta_n}\right)+\eta~,
\enq
for all $i_0\ell \in \Vc^n_{\theta_n}$.

From \eqref{subsol:inside_F_geq_eta} and  \eqref{subsol:param_kernel}, we get by Itô's formula
\beqs
Y^{n,i}_{\theta_n} - \bar{v}_{i}\left(\theta_n, X^{n,i}_{\theta_n}\right) &\geq&\eps\wedge\zeta\wedge\eta+\beta_n \qquad \forall i\in \Vc^n_{\theta_n}\;.
\enqs
Since $y_n = \bar{v}_{i_0}(t_n, x_n) - n^{-1} < \bar{v}_{i_0}(t_n, x_n)$, this is  in contradiction with the dynamic programming principle \eqref{eq:DPP1} for sufficiently large $n$ by \eqref{subsol:beta_to_zero}.

\subsection{Viscosity properties on \texorpdfstring{$\{T\}\times\R^d\times\Ic$}{f}}
\subsubsection{Viscosity supersolution}

Let $(x_0,i_0)\in\R^d\times\Ic$ and $\varphi_i\in C^2(\R^d)$ for $i\in\Ic$ satisfying
\beqs
0 ~=~ \bar{v}_{i_0,*}(T,x_0) - \varphi_{i_0} (x_0)  & = &  \min_{\Ic\times\R^d} \left(\bar{v}_{\cdot,*}(T,\cdot) - \varphi_{\cdot}\right)\;.
\enqs
Without loss of generalities we can take this minimum to be strict in $x$ once fixed $i_0$.

 \paragraph{Step 1.} From the convention $\sup \emptyset :=-\infty$ and since $\bar{v}$ is a viscosity supersolution for \eqref{PDE1} on $[0,T)\times\R^d\times\Ic$, we have
\beqs
\delta^*\bar{v}_{\cdot,*}\geq 0 ~\text{ on }[0,T)\times\R^d\times\Ic
\enqs
in the viscosity sense. From the upper-semicontinuity of $\delta^*$, we can then deduce by a standard argument (see e.g. proof of Lemma 5.2 in \cite{SonerTouziSIAM}) that $\delta^* \varphi(x_0) \geq 0$.

\paragraph{Step 2.}
We now prove
\beqs
\varphi_{i_0}(x_0) - \sup_{0\leq k<\bar{K}} \varphi_{i_0k}(x_0) & \geq& 0\;.
\enqs
From the definition of $\bar{v}_{*}$, there exists a sequence $(s_n,\xi_n)_{n\geq1}$ converging to $(T,x_0)$ such that $s_n<T$ for $n\geq1$ and
\beqs
\lim_{n \to \infty}\bar{v}_{i_0,*}(s_n,\xi_n)=\bar{v}_{i_0,*}(T,x_0)~.
\enqs
For $n\geq 1$, consider the auxiliary test function
\beqs
\varphi_{n,i}(t,x):=\varphi_{i}(x)-\frac{1}{2}|x-x_{0}|^{2}+\frac{T-t}{(T-s_{n})^2}\;~(t,x,i)\in[0,T]\times\R^d\times\Ic.
\enqs
Let $B_1(x_0)$ be the unit open ball in $\R^d$ centered at $x_0$. Choose $(t_n , x_n)\in [s_n,T]\times \bar{B}_1(x_0)$, which minimizes the difference $\bar{v}_{i_0,*} - \varphi_{n,i_0}$ on $[s_n,T]\times \bar{B}_1(x_0)$.

We claim that, for $n$ large enough $t_n<T$, and $x_n$ converges to $x_0$.
Indeed, for sufficiently large $n$ we have
\beqs
(\bar{v}_{i_0,*} - \varphi_{n,i_0})(s_n,x_0)  & \leq &  - \frac{1}{(T-s_n)} ~< ~0\;.
\enqs
On the other hand, for any $x\in\bar{B}_1(x_0)$
\beqs
(\bar{v}_{i_0,*} - \varphi_{n,i_0})(T,x) & = & \bar{v}_{i_0,*}(T,x) - \varphi_{i_0}(x) + \frac{1}{2} |x-x_0|^2 ~\geq~ \bar{v}_{i_0,*}(T,x) - \varphi_{i_0}(x) ~\geq~ 0\;.
\enqs
Comparing the two inequalities leads us to conclude that $t_n < T$ for large $n$. Let $x^*$ be an adherence value of the sequence $(x_n)_{n\geq1}$.
Since $t_n \geq s_n$ and $(t_n , x_n)$ minimizes the difference $(\bar{v}_{i_0,*} - \varphi_{n,i_0})$, we have
\beqs
(\bar{v}_{i_0,*}(T,.) - \varphi_{i_0})(x^*) - (\bar{v}_{i_0,*}(T,.) - \varphi_{i_0})(x_0)   &\leq & \\
\liminf_{n\to\infty}(\bar{v}_{i_0,*} - \varphi_{n,i_0})(t_n,x_n) - (\bar{v}_{i_0,*} - \varphi_{n,i_0})(s_n,\xi_n) -  \frac{1}{2}|x_n-x_0|^2 &\leq& \\
\limsup_{n\to\infty}(\bar{v}_{i_0,*} - \varphi_{n,i_0})(t_n,x_n) - (\bar{v}_{i_0,*} - \varphi_{n,i_0})(s_n,\xi_n) -  \frac{1}{2}|x_n-x_0|^2
&\leq& \\
-  \frac{1}{2}|x^*-x_0|^2\;.
\enqs
Since $x_0$ minimizes the difference $\bar{v}_{i_0,*}(T, \cdot) - \varphi_{i_0}$ we have
\beqs
0 & \leq & (\bar{v}_{i_0,*}(T, \cdot) - \varphi_{i_0})(x^*) - (\bar{v}_{i_0,*}(T, \cdot)  - \varphi_{i_0})(x_0) ~\leq~ -  \frac{1}{2}|x^*-x_0|^2\;.
\enqs
Hence $x^*=x_0$ and $(x_n)_{n\geq 1}$ converges to $x_0$.

We now use the viscosity supersolution property of  $\bar{v}$ on $[0, T)\times \R^d\times\Ic$ with the test function $\tilde \varphi_{n}=\varphi_n+\bar{v}_{*,i_0}(t_n,x_n)-\varphi_{n,i_0}(t_n,x_n)$ and we have
\beq \label{eq:terminal_cond:H_geq_0}
\tilde \varphi_{n,i_0}(t_n,x_n) -
\sup_{0\leq k<\bar{K}}\tilde \varphi_{n,i_0k}(t_n,x_n) & \geq & 0
\enq
for all $n\geq 1$. We clearly have
\beqs
\varphi_{i_0}(x_n)-
\sup_{0\leq k<\bar{K}}\varphi_{i_0k}(x_n)=
\tilde \varphi_{n,i_0}(t_n,x_n)-
\sup_{0\leq k<\bar{K}}\tilde \varphi_{n,i_0k}(t_n,x_n)~.
\enqs
Since $x_n$ converges to $x_0$, we get  by  sending $n$ to infinity that $\varphi_{i_0}(x_0)-
\sup_{0\leq k<\bar{K}}\varphi_{i_0k}(x_0)\geq  0$.

\paragraph{Step 3.}
We now prove the last assertion. Assume that
\beqs
F^*\varphi_{i_0}(x_0) < \infty ~\text{ and }~
\varphi_{i_0}(x_0) = \bar v_{i_0,*}(T,x_0) < g_{i_0}
\enqs
and let us work towards a contradiction. Since $\bar v_\cdot(T,\cdot) = g_\cdot$ by the definition of the problem, there is a constant $\eta>0$ such that
\beqs
\varphi_{i_0} - \bar v_{i_0}(T,\cdot) &= & \varphi_{i_0} - g_{i_0} ~\leq~ - \eta~\text{ on }B_\eps(x_0)
\enqs
for some $\eps > 0$. Since $x_0$ is a strict minimizer, let $\zeta$ be
\beqs
2\zeta = \min_{ x\in\partial B_\eps(x_0)} \bar v_{i_0,*}(T,x) - \varphi_{i_0}(x) > 0~.
\enqs
It follows that there exists $r > 0$ such that $\bar v_{i_0}(t,x) - \varphi_{i_0}(x)\geq \zeta > 0$ for all $(t, x) \in [T-r, T ]\times \partial B_\eps(x_0)$. This holds, otherwise, for each $r>0$, we could find $(t_r, x_r) \in [T-r,T] \times \partial B_\eps(x_0)$ such that $\bar v_{i_0}(t_r, x_r) - \varphi_{i_0}(x_r) \leq \zeta$. Sending $r$ to $0$, since $\partial B_\eps(x_0)$ is compact, up to a subsequence we would have
$\bar v_{i_0,*}(T,x^*) - \varphi_{i_0}(x^*)\leq \zeta$ for some $x^*\in\partial B_\eps (x_0)$, in contradiction with the definition of $\zeta$.

Therefore, we have
\beqs
\bar v_{i_0}(t,x) - \varphi_{i_0}(x)\geq \zeta\wedge\eta > 0~\text{ for }(t,x)\in
\left([T-r,T] \times \partial B_\eps(x_0)\right)\cup \left(\{T\} \times B_\eps(x_0)\right)
~.
\enqs
Since $F^* \varphi_{i_0}(x_0) < \infty$, up to smaller $\eps > 0$, we have
\beqs
\lambda_Y(x,y,a) - L^a\varphi_{i_0}(x) \leq C \quad\text{ for all }a \in\Nc_\eps(x,D  \varphi_{i_0}(x))\\
\text{and } (x,y)\in \R^d\times\R \text{ such that } x\in B_\varepsilon(x_0)\text{ and } |y-\varphi_{i_0}(x)|\leq \varepsilon~.
\enqs
for some constant $C > 0$. Let $\tilde{\varphi}_{i}(t, x) := \varphi_{i}(x) + 2C(t-T)$. Then, for sufficiently small $r>0$,
\beqs
\bar v_{i_0}(t,x) - \tilde\varphi_{i_0}(t,x)\geq \frac{1}{2}(\zeta\wedge\eta) & >& 0
\enqs
for $(t,x)\in
\left([T-r,T] \times \partial B_\eps(x_0)\right)\cup \left(\{T\} \times B_\eps(x_0)\right)$,  and
\beqs
 -\partial_t\tilde \varphi_{i_0}(t,x)+\lambda_Y(x,y,a) - L^a\tilde\varphi_{i_0}(t,x) & \leq & -C
\enqs
 for all $a \in\Nc_\eps(x,D \tilde \varphi_{i_0}(t,x))$ and $(x,y)\in \R^d\times\R$ such that $x\in B_\varepsilon(x_0)$ and $|y-\tilde \varphi_{i_0}(t,x)|\leq \varepsilon$.
By following the same  arguments as in Step 3 of Section \ref{supersol_section}, the latter inequalities lead to a contradiction
of \eqref{eq:DPP1}.

\subsubsection{Viscosity subsolution}

Let $(x_0,i_0)\in\R^d\times\Ic$ and $\varphi_{i}\in C^2(\R^d)$ for $i\in\Ic$ satisfying
\beqs
0 ~=~ \bar{v}_{i_0}^*(T,x_0) - \varphi_{i_0} (x_0)  & = &  \max_{\Ic\times\R^d} (\bar{v}_{\cdot}^*(T,\cdot) - \varphi_{\cdot})\;.
\enqs
Without loss of generalities we can take this maximum to be strict in $x$ once have fixed $i_0$.
We argue by contradiction and assume $\delta_*\varphi_{i_0}(x_0) >0$ and
\beq\label{subsol_terminal:contradiction}
4 \eta ~=~ \min\left\{
\varphi_{i_0}(x_0) - g_{i_0}(x_0)
~;~
\left(\varphi_{i_0} - \sup_{0\leq k<\bar{K}} \varphi_{i_0k}\right)(x_0)\right\}  > 0~\;.
\enq

\paragraph{Step 1.} By \eqref{eq:delta>0} and Assumption \ref{Assumption_H_2}, we can find $r > 0$ and a locally Lipschitz map $\hat{a}$ satisfying
\beq\label{subsol_terminal:control}
\hat{a}(x,D  \varphi_{i_0}(x))\in\Nc_0(x, D  \varphi_{i_0}(x))
\enq
 for all $x\in B_r(x_0)$. Set $\tilde\varphi_{i}(t,x):=\varphi_{i}(x)+\sqrt{T-t}$. Since $\partial\tilde\varphi_{i}(t,x)\to-\infty$ as $t\to T$, we deduce that, for $r,\eps > 0$ small
enough,
\beq \label{subsol_terminal:infinitesimal}
\rho(t,x,y) =- \partial\tilde\varphi_{i_0}(t,x) +\lambda_Y\left(x,y,
\hat{a}\left(x,D  \tilde\varphi_{i_0}(t,x)\right)\right) -
L^{\hat{a}\left(x,D \tilde \varphi_{i_0}(t,x)\right)}\tilde\varphi_{i_0}(t,x) \geq \eta ~,
\enq
for all $(t,x,y)\in[T-r,T)\times\R^d\times\R$ such that $x\in B_r(x_0)$ and $|y-\tilde\varphi_{i_0}(t,x)|\leq \eps$. We can also notice that
\beqs
\left(\tilde\varphi_{i_0} - \sup_{0\leq k<\bar{K}} \tilde\varphi_{i_0k}\right)(t,x_0) = \left(\varphi_{i_0} - \sup_{0\leq k<\bar{K}} \varphi_{i_0k}\right)(x_0)~.
\enqs
Therefore,  we get from \eqref{subsol_terminal:contradiction}
\beq\label{subsol_terminal:jump}
\left(\tilde\varphi_{i_0} - \sup_{0\leq k<\bar{K}} \tilde\varphi_{i_0k}\right)(t,x)\geq \eta ~,~\text{ for all }(t,x)\in[T-r,T]\times B_r (x_0)~.
\enq
for $r>0$ small enough.

Also observe that, since $\bar{v}_{i_0}^*- \tilde\varphi_{i_0}$ is upper-semicontinuous and $\left(\bar{v}_{i_0}^*- \tilde\varphi_{i_0}\right)(T,x_0)=0$, we have
\beq\label{subsol_terminal:eps_over_2}
\bar{v}_{i_0}^*(t,x) \leq \tilde\varphi_{i_0}(t,x) + \eps/2 ~\text{ for all }(t,x)\in[T-r,T]\times B_r (x_0)~.
\enq
for $r>0$ small enough.
Since $\bar v_{\cdot}(T,\cdot) = g_{\cdot}$, we have for $r$ small enough
\beqs
\tilde\varphi_{i_0} - \bar v_{i_0}(T,\cdot) = \tilde\varphi_{i_0} - g_{i_0} \geq  \eta~\text{ on }B_r(x_0)~.
\enqs
Since $x_0$ is a strict maximizer for $v_{i_0,*}(T,\cdot) - \varphi_{i_0}$, we can define  $\zeta>0$ such that
\beqs
-2\zeta = \max_{ x\in\partial B_r(x_0)} \bar v_{i_0}^*(T,x) - \varphi_{i_0}(x) < 0~.
\enqs
for $r>0$ small enough. It follows that, for $r > 0$ small enough, $\bar v_{i_0}(t,x) - \tilde\varphi_{i_0}(x)\leq -\zeta < 0$ for all $(t, x) \in [T-r, T ]\times \partial B_r(x_0)$. This means
\beq\label{subsol_terminal:boundary_cond}
\bar v_{i_0}(t,x) - \tilde\varphi_{i_0}(x)\leq -\zeta \wedge \eta~ \text{ for all }
(t, x) \in \left([T-r',T]\times \partial B_r(x_0)\right)\cup\left(\{T\}\times B_r(x_0)
\right)\;.
\enq
By following the arguments in Step 2 of Section \ref{subsubsection:visc_subsol}, we see that \eqref{subsol_terminal:control}, \eqref{subsol_terminal:infinitesimal},  \eqref{subsol_terminal:jump}, \eqref{subsol_terminal:eps_over_2}, \eqref{subsol_terminal:boundary_cond},
lead to a contradiction of \eqref{eq:DPP1}.

\subsection{Uniqueness}

We turn to the uniqueness of solution to the dynamic programming equation \eqref{PDE1}-\eqref{PDE2}. To this end, we need to introduce additional assumptions.
We first recall that the Hausdorff distance $d_{\mathcal{H}}$ on closed subsets of $A$ is defined by
\beqs
d_{\mathcal{H}}(B,C) & = & \min\left\{r\geq0~:~B\subset C_r \mbox{ and }C\subset B_r\right\}
\enqs
for $B,C\subset A$ closed and nonempty, with
\beq\label{convHdist}
D_r & = & \left\{a\in A~:~\exists a'\in D\;,~d_A(a,a')\leq r \right\}
\enq
for any $D\subset A$ and any $r\geq 0$. We use the convention
\beqs
d_{\mathcal{H}}(B,C) & = & +\infty
\enqs
if $B=\emptyset$ or $C=\emptyset$.

\begin{Assumption}\label{Assumption_H_3}
\begin{enumerate}[(i)]
    \item The the functions $\lambda$ and $\sigma$ do not depend on the control, i.e. $\lambda:\R^d \to \R^d$ and $\sigma:\R^d \to \R^{d \times m}$.

    \item There exist two constants $C>0$ and $\eta \in(0,1]$ such that the function $w$ appearing in Assumption \ref{Assumption_H_0}(iii) satisfies $w(x)\leq C x^\eta$ for $x\in\R_+$.
    \item There exists a constant $C>0$ such that
    \beqs
    d_\Hc\left(~\Nc_\eps(x,p)~,~\Nc_{\eps'}(x',p')~\right)\leq C\left(|p-p'| +\eps+\eps'\right)(1+|x|) + C|x-x'|
    \enqs
    for all $\eps,\eps'\geq0$, $x,x',p,p'\in\R^d$.

    \item $\Nc_0(0,0)\neq \emptyset$.
    \end{enumerate}
\end{Assumption}

\begin{Remark}\label{rem-kernel-nonempty}
Since we use the convention \eqref{convHdist}, the combination of (iii) and (iv) implies that $\Nc_\eps(x,p)\neq\emptyset$ for any $(\eps, x,p)\in\R_+\times\R^d\times\R^d$.

In particular we always have that $\delta\varphi \geq 0$ for any $\varphi\in C^2(\R^d)$. Therefore, the terminal viscosity supersolution solution property takes the following form
\beq\label{PDE2:uniqueness}
\min \left\{~\varphi_i(x)-g_i(x)~;~ \left(\varphi_{i} - \sup_{0\leq k<\bar{K}}\varphi_{ik}\right)(x)~\right\}& \geq  & 0
\enq
for $(x,i)\in\R^d\times\Ic$ and $(\varphi_j)_{j\in\Ic}$ a test function according to Definition \ref{def-viscosolT} (i).
\end{Remark}

\begin{Lemma}\label{lemma:uniqueness:supersol}
let $u:~[0,T]\times\R^d\times\Ic$ be a lower semi-continuous supersolution of  \eqref{PDE1}-\eqref{PDE2:uniqueness}. Define the function $\Lambda:~[0,T]\times\R^d\rightarrow\R$ by
\beqs
\Lambda(t,x) = \theta e^{- \kappa t} (1+|x|^{2 \gamma +2})\;,\quad (t,x)\in[0,T]\times\R^d\;.
\enqs
with $\theta,\kappa,\gamma\in\R_+$. Then, under Assumptions \ref{Assumption_H_0} and \ref{Assumption_H_3}, for any $\gamma\geq 0$ there exists $\kappa_0 > 0$
such that for any $\kappa \geq \kappa_0$ and $\theta>0$, the function $u+\Lambda$ is a supersolution to \eqref{PDE1}-\eqref{PDE2}.
\end{Lemma}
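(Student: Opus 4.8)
The plan is to reduce the statement to the fact that the smooth function $\Lambda$ is a \emph{strict classical supersolution} of the linear operator obtained by freezing the nonlinear, control-dependent part of \eqref{PDE1}, and then to verify this by a direct growth computation. Since $\Lambda$ is continuous, independent of the label, and $C^{1,2}$, the semicontinuous envelopes satisfy $(u+\Lambda)_{\cdot,*}=u_{\cdot,*}+\Lambda$ and $(u+\Lambda)^*_\cdot=u^*_\cdot+\Lambda$; hence, given a test configuration $(t_0,x_0,i_0)$, $(\varphi_i)_{i\in\Ic}$, $\bar\varphi$ for $u+\Lambda$, the functions $\psi_i:=\varphi_i-\Lambda$ are again $C^{1,2}$, the bound $\bar\psi:=\bar\varphi+|\Lambda|$ is continuous with $\sup_i|\psi_i|\leq\bar\psi$, and $u_{\cdot,*}-\psi_\cdot$ attains its global minimum, value $0$, at $(t_0,x_0,i_0)$, so the supersolution inequalities for $u$ apply with $\psi$. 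The obstacle term is then immediate, since $\varphi_{i_0}-\sup_{0\leq k<\bar K}\varphi_{i_0 k}=\psi_{i_0}-\sup_{0\leq k<\bar K}\psi_{i_0 k}$ ($\Lambda$ ignores the label). At the terminal time, testing $u$ against $\psi_i:=\varphi_i-\Lambda(T,\cdot)\in C^2(\R^d)$ and using the supersolution property of $u$ in the form \eqref{PDE2:uniqueness} (licit under Assumption \ref{Assumption_H_3} by Remark \ref{rem-kernel-nonempty}) gives $\varphi_{i_0}(x_0)-g_{i_0}(x_0)\geq\Lambda(T,x_0)>0$ and $\varphi_{i_0}(x_0)-\sup_{0\leq k<\bar K}\varphi_{i_0 k}(x_0)\geq0$, while $\delta_*\varphi_{i_0}(x_0)\geq0$ holds for free because $\delta\geq0$ on $\R^d\times\R^d$ by Remark \ref{rem-kernel-nonempty}; this is exactly \eqref{PDE2}. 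Only the parabolic term of \eqref{PDE1} is left.

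For that term I would use Assumption \ref{Assumption_H_3}(i): the control enters only through $\lambda_Y,\sigma_Y$, so one may split
\beqs
F_\varepsilon(x,y,p,M) & = & G_\varepsilon(x,y,p)-\lambda(x)^\top p-\tfrac12\mathrm{Tr}\big(\sigma\sigma^\top(x)M\big)\;,\qquad G_\varepsilon(x,y,p):=\sup_{a\in\Nc_\varepsilon(x,p)}\lambda_Y(x,y,a)\;,
\enqs
and, the last two terms being jointly continuous, $F^*=G^*+(\text{this continuous part})$. Using the Lipschitz continuity of $\lambda_Y$ in $y$ (Assumption \ref{Assumption_H_0}(iii)), the modulus $w$ in the control (Assumptions \ref{Assumption_H_0}(iv) and \ref{Assumption_H_3}(ii)), the Hausdorff stability $d_\Hc(\Nc_\varepsilon(x,p),\Nc_\varepsilon(x,p'))\leq C(1+|x|)|p-p'|$ (Assumption \ref{Assumption_H_3}(iii)), and the nonemptiness of the kernels (Remark \ref{rem-kernel-nonempty}, which also makes $\lambda_Y$ bounded in $a$ so that $G_\varepsilon$ is finite), one obtains $|G_\varepsilon(x,y+\ell,p+q)-G_\varepsilon(x,y,p)|\leq L|\ell|+w\big(C(1+|x|)|q|\big)$ uniformly in $\varepsilon\geq0$, hence the same bound for $G^*$. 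Applying this along a sequence realizing the $\limsup$ defining $F^*\psi_{i_0}$, with $\ell=\Lambda(t_0,x_0)$, $q=D\Lambda(t_0,x_0)$, and noting that $D^2\Lambda\geq0$ (so the Hessian perturbation only lowers $F_\varepsilon$, contributing $-\tfrac12\mathrm{Tr}(\sigma\sigma^\top D^2\Lambda)$), one gets, after combining with $-\partial_t\psi_{i_0}+F^*\psi_{i_0}\geq0$ and $-\partial_t\Lambda=\kappa\Lambda$, that at $(t_0,x_0)$
\beqs
-\partial_t\varphi_{i_0}+F^*\varphi_{i_0} & \geq & (\kappa-L)\Lambda-\lambda^\top D\Lambda-\tfrac12\mathrm{Tr}\big(\sigma\sigma^\top D^2\Lambda\big)-w\big(C(1+|x|)|D\Lambda|\big)\;.
\enqs
It thus suffices to choose $\kappa$ so that the right-hand side is nonnegative, i.e.\ so that $\Lambda$ is a strict classical supersolution of this linear operator.

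For the growth estimate, from $\Lambda(t,x)=\theta e^{-\kappa t}(1+|x|^{2\gamma+2})$ one has $D\Lambda=(2\gamma+2)\theta e^{-\kappa t}|x|^{2\gamma}x$ and $D^2\Lambda=(2\gamma+2)\theta e^{-\kappa t}\big(|x|^{2\gamma}I+2\gamma|x|^{2\gamma-2}xx^\top\big)\geq0$. Since $\lambda$ has at most linear growth and $\sigma\sigma^\top$ at most quadratic growth (Assumptions \ref{Assumption_H_0}(ii)--(iii)), and since $w(r)\leq Cr^\eta$ with $\eta\leq1$ (Assumption \ref{Assumption_H_3}(ii)), each of $|\lambda^\top D\Lambda|$, $|\mathrm{Tr}(\sigma\sigma^\top D^2\Lambda)|$ and $w\big(C(1+|x|)|D\Lambda|\big)$ is dominated by a multiple (depending on $\gamma$, and for the last one also on a lower bound of $\theta$ when $\eta<1$) of $\Lambda(t,x)$, because the extra powers of $|x|$ produced are at most $|x|^{2\gamma+2}$ (one uses $(2\gamma+2)\eta\leq2\gamma+2$ and $1+(2\gamma+1)\eta\leq2\gamma+2$). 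Taking $\kappa_0$ large enough that $\kappa-L$ beats these constants then makes the bracket nonnegative for all $\kappa\geq\kappa_0$, which yields the parabolic viscosity inequality for $u+\Lambda$ and finishes the proof.

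The main obstacle is not the growth bookkeeping but the passage of the additive, $O(1)$ perturbation $\Lambda$ (and of $D\Lambda$, $D^2\Lambda$) through the relaxed upper-semilimit $F^*$: one cannot simply compare $F^*$ at two non-infinitesimally-close arguments, and it is precisely here that the Hausdorff stability of the kernels $\Nc_\varepsilon$ in $p$ (Assumption \ref{Assumption_H_3}(iii)), together with their nonemptiness (Remark \ref{rem-kernel-nonempty}), is needed to produce the controllable error term $w\big(C(1+|x|)|D\Lambda|\big)$ that the penalization $\Lambda$ must then absorb.
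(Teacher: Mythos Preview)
Your proposal is correct and follows essentially the same route as the paper: both reduce to test functions $\psi_i=\varphi_i-\Lambda$ for $u$, dispose of the obstacle and terminal terms using that $\Lambda$ is label-independent and strictly positive (together with Remark \ref{rem-kernel-nonempty} for $\delta_*\geq0$), and handle the parabolic term by splitting $F_\varepsilon$ into the control-dependent part $G_\varepsilon(x,y,p)=\sup_{a\in\Nc_\varepsilon(x,p)}\lambda_Y(x,y,a)$ and the linear part in $(p,M)$, then using Assumption \ref{Assumption_H_3}(iii) and the H\"older bound on $w$ to control the $G$-difference and absorbing the resulting growth into $\kappa\Lambda$. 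Your presentation is somewhat more streamlined---you work with a uniform-in-$\varepsilon$ bound on $|G_\varepsilon(x,y+\ell,p+q)-G_\varepsilon(x,y,p)|$ and pass it to $G^*$, whereas the paper keeps the iterated $\lim_{\varepsilon\to0}\sup$ explicit and splits into two increments $\Delta\Gamma^1,\Delta\Gamma^2$---but the content is the same. Your parenthetical remark that, when $\eta<1$, the constant absorbing $w\big(C(1+|x|)|D\Lambda|\big)$ depends on a lower bound for $\theta$ is a correct observation that the paper's proof glosses over when asserting that $\kappa_0$ is $\theta$-independent.
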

\begin{proof}
Let $\varphi_j\in C^{1,2}([0,T]\times\R^d)$ for $j\in\Ic$ be such that the function $\varphi_\cdot -(u + \Lambda)$ has a local maximum in $(t,x,i)$ which is equal to $0$ and $\bar{\varphi}\in C^0([0,T]\times \mathbb{R}^{d})$ such that $\sup_{j\in\Ic}|\varphi_j|\leq \bar{\varphi}$. Since $u$ is a supersolution for \eqref{PDE1}, we have
\beqs
\min\left\{-\partial_t (\varphi_{i} -\Lambda)(t,x) + F^*(\varphi_{i} -\Lambda)(t,x)~;~
\left( (\varphi_{i} -\Lambda) - \sup_{0\leq k<\bar{K}}(\varphi_{ik} -\Lambda)\right)(t,x)
\right\} & \geq & 0\;.
\enqs
We then have
\beq
\left(\varphi_{i} - \sup_{0\leq k<\bar{K}}\varphi_{ik}\right)(t,x) & = &
\left( (\varphi_{i} -\Lambda) - \sup_{0\leq k<\bar{K}}(\varphi_{ik} -\Lambda)\right)(t,x) ~~\geq~~ 0~. \label{uniqueness:supersolution:G_v_eq}
\enq
We now  prove that
\beqs
-\partial_t \varphi_{i}(t,x) + F^*\varphi_{i}(t,x) & \geq & 0\;.
\enqs
If $F^*\varphi_{i}(t,x)=+\infty$, then the inequality is obvious.  Suppose that $F^*\varphi_{i}(t,x)<+\infty$. From Assumption \ref{Assumption_H_3}, we get that $F^*$ is locally bounded.
Since $u$ is a viscosity supersolution to \eqref{PDE1}, we have
\beqs
-\partial_t (\varphi_{i} -\Lambda)(t,x) + F^*(\varphi_{i} -\Lambda)(t,x) & \geq & 0\;.
\enqs
Using the definition of $\Lambda$ and $F$, Assumption \ref{Assumption_H_3} and the continuity of the functions considered, we get
\beq
-\partial_t \varphi_{i}(t,x)- \theta \kappa e^{- \kappa t} (1+|x|^{2 \gamma +2}) & & \nonumber \\
+\lim_{\eps\to 0} \sup_{\begin{tiny}\begin{array}{c}|x-x'|\leq\eps\\|(\varphi_{i}-\Lambda)(t,x)-y'|\leq\eps\\|D(\varphi_{i}-\Lambda)(t,x)-p|\leq\eps\end{array}\end{tiny}}\sup_{a \in \Nc_\eps(x',p)}
\left\{\lambda_{Y}(x',y' ,a)\right\} & &  \nonumber\\
-\lambda(x)^\top D  \varphi_{i}(t,x)+\theta e^{- \kappa t} \lambda(x)^\top D  |x|^{2 \gamma +2}  & & \nonumber\\
- \frac{1}{2}\text{Tr}\left(\sigma\sigma^\top(x)D ^2\varphi_{i}(t,x)\right)
+\theta e^{- \kappa t} \frac{1}{2}\text{Tr}\left(\sigma\sigma^\top(x)D ^2|x|^{2 \gamma +2}\right) & \geq & 0~. \label{uniqueness:supersol:2nd_ineq}
\enq
We next define the function $\Gamma_\eps: \R^d\times\R\times\R^d\to \R$ by $\Gamma_\eps('x,y',p)=\sup_{a \in \Nc_\eps(x',p)} \left\{\lambda_{Y}(x',y',a)\right\}$ for $(x',y',p)\in\R^d\times\R\times\R^d$.
Then,  we get from \eqref{uniqueness:supersol:2nd_ineq}
 \beq
-\partial_t \varphi_{i}(t,x)+\lim_{\eps\to 0} \sup_{\begin{tiny}\begin{array}{c}|x-x'|\leq\eps\\|\varphi_{i}(t,x)-y'|\leq\eps\\|D\varphi_{i}(t,x)-p|\leq\eps\end{array}\end{tiny}}\Gamma_\eps(x',y',p) & & \nonumber \\
-\lambda(x)^\top D  \varphi_{i}(t,x) - \frac{1}{2}\text{Tr}\left(\sigma\sigma^\top(x)D ^2\varphi_{i}(t,x)\right) & \geq &
\nonumber\\
\theta \kappa e^{- \kappa t} (1+|x|^{2 \gamma +2})+
\lim_{\eps\to 0} \sup_{\begin{tiny}\begin{array}{c}|x-x'|\leq\eps\\|\varphi_{i}(t,x)-y'|\leq\eps\\|D\varphi_{i}(t,x)-p|\leq\eps\end{array}\end{tiny}}\Gamma_\eps(x',y',p) & &
\nonumber\\
-\lim_{\eps\to 0} \sup_{\begin{tiny}\begin{array}{c}|x-x'|\leq\eps\\|(\varphi_{i}-\Lambda)(t,x)-y'|\leq\eps\\|D(\varphi_{i}-\Lambda)(t,x)-p|\leq\eps\end{array}\end{tiny}}\Gamma_\eps(x',y',p)  & &
\nonumber\\
-\theta e^{- \kappa t} \lambda(x)^\top D  |x|^{2 \gamma +2}-\theta e^{- \kappa t} \frac{1}{2}\text{Tr}\left(\sigma\sigma^\top(x)D ^2|x|^{2 \gamma +2}\right) &  = &
\nonumber\\
\theta \kappa e^{- \kappa t} (1+|x|^{2 \gamma +2})-\theta e^{- \kappa t} \lambda(x)^\top D  |x|^{2 \gamma +2}  & & \nonumber\\
- \theta e^{- \kappa t} \frac{1}{2}\text{Tr}\left(\sigma\sigma^\top(x)D ^2|x|^{2 \gamma +2}\right)+\Delta \Gamma^1(t,x) + \Delta \Gamma^2(t,x)
~, \label{uniqueness:supersolution:last_inequality}
\enq
where
\beqs
\Delta \Gamma^1(t,x) &=& \lim_{\eps\to 0} \sup_{\begin{tiny}\begin{array}{c}|x-x'|\leq\eps\\|\varphi_{i}(t,x)-y'|\leq\eps\\|D\varphi_{i}(t,x)-p|\leq\eps\end{array}\end{tiny}}\Gamma_\eps(x',y',p)-\lim_{\eps\to 0} \sup_{\begin{tiny}\begin{array}{c}|x-x'|\leq\eps\\|\varphi_{i}(t,x)-y'|\leq\eps\\
|D(\varphi_{i}-\Lambda)(t,x)-p|\leq\eps\end{array}\end{tiny}}\Gamma_\eps(x',y',p)\;,\\
\Delta \Gamma^2(t,x) &=& \lim_{\eps\to 0} \sup_{\begin{tiny}\begin{array}{c}|x-x'|\leq\eps\\|\varphi_{i}(t,x)-y'|\leq\eps\\
|D(\varphi_{i}-\Lambda)(t,x)-p|\leq\eps\end{array}\end{tiny}}\Gamma_\eps(x',y',p) -
\lim_{\eps\to 0} \sup_{\begin{tiny}\begin{array}{c}|x-x'|\leq\eps\\|(\varphi_{i}-\Lambda)(t,x)-y'|\leq\eps\\
|D(\varphi_{i}-\Lambda)(t,x)-p|\leq\eps\end{array}\end{tiny}}\Gamma_\eps(x',y',p)
~.
\enqs
From Assumptions \ref{Assumption_H_0} and \ref{Assumption_H_3}, we get a constant $C_1$ that does not depend on $(t,x,i)$ such that
\beqs
\Delta \Gamma^1(t,x) &\geq&-\lim_{\eps\to0}\sup_{\begin{tiny}\begin{array}{cc}|x-x'|\leq\eps & |x-\tilde x'|\leq\eps\\|\varphi_{i}(t,x)-y'|\leq\eps & |\varphi_{i}(t,x)-\tilde y'|\leq\eps\\|D\varphi_{i}(t,x)-p|\leq\eps & |D(\varphi_{i}-\Lambda)(t,x)-\tilde p|\leq\eps\end{array}\end{tiny}}\Gamma_\eps(x',y',p)-\Gamma_\eps(\tilde x',\tilde y',\tilde p)
\\
&\geq& - C_1|D \Lambda(t,x)|^\eta(1+|x|^\eta)~,
\enqs
Analogously, for the second term we get a constant $C_2>0$ that does not depend on $(t,x,i)$ such that
\beqs
\Delta \Gamma^2(t,x) &\geq&
-C_2 \Lambda(t,x)\;.
\enqs

Considering the right-hand side of \eqref{uniqueness:supersolution:last_inequality} and taking into account the growth condition of the different terms, there exists a constant $\kappa_0$, which does not depend on $\theta$, such that if $\kappa \geq \kappa_0$ this expression is non-negative. Henceforth, with \eqref{uniqueness:supersolution:G_v_eq}, we obtain that $u+\Lambda$ is a viscosity supersolution to \eqref{PDE1}.

We finally take $(i,x)\in \Ic\times \R^d$ and functions $\varphi_j\in C^2(\R^d)$  and $\bar \phi\in C^0(\R^d)$ such that $\sup_{i\in\Ic}|\varphi_i| \leq \bar{\varphi}$ and
\beqs
0~=~u_{i_0,*}(T,x)+\Lambda(T,x)- \varphi_{i}(x) & = & \max_{\Ic\times\mathbb{R}^{d}}(u_{\cdot,*}(T,.)+\Lambda(T,.)-\varphi_\cdot)\;.
\enqs
Since $u$ is a supersolution to \eqref{PDE2}, we have
\beqs
\varphi_i(x)-\Lambda(T,x) & \geq & g_i(x)~,
\enqs
since $\Lambda \geq0$, we get $\varphi_i(T,x)  \geq g_i(x)$. Combining it with \eqref{uniqueness:supersolution:G_v_eq}, we obtain from Remark \ref{rem-kernel-nonempty} that $u + \Lambda $ is a viscosity supersolution to \eqref{PDE2}.
\end{proof}

We turn to the main result of this section which is a comparison theorem.
We recall that the definition of $|.|$ on $\Ic$ is given in Section \ref{subsecdef}.

\begin{Theorem}\label{Thm:comparison}
Let $\bar{w}_\cdot$ (resp. $\bar{u}_\cdot$) be a lsc (resp. usc) viscosity supersolution (resp. subsolution) to \eqref{PDE1}-\eqref{PDE2:uniqueness}.
Suppose that there exists $\gamma>0$ such that
\beq\label{thm:growthinx}
\sup_{(t,x,i)\in [0,T] \times \R^d\times \Ic} \frac{|\bar{w}_i(t,x)|+|\bar{u}_i(t,x)|}{1+|x|^\gamma}< +\infty~ \;,
\enq
and
\beq\label{uniqueness:hyp:i_to_infty}
\sup_{(t,x)\in [0,T] \times \R^d} |\bar{w}_i(t,x)|+|\bar{u}_i(t,x)| \xrightarrow[|i|\to \infty]{} 0\;.
\enq
Then, under Assumption \ref{Assumption_H_0}-\ref{Assumption_H_1}-\ref{Assumption_H_3}, we have  $\bar{u}_\cdot \leq \bar{w}_{\cdot}$ on $[0,T] \times \R^d\times \Ic$.
\end{Theorem}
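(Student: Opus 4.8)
The plan is to argue by contradiction, adapting the doubling-of-variables technique fibre-wise in the label variable, with the polynomial perturbation of Lemma~\ref{lemma:uniqueness:supersol} carrying both the coercivity in $x$ and the lower-order terms. Assume $\sup_{[0,T]\times\R^d\times\Ic}(\bar u_i-\bar w_i)=2M_0>0$. Fix the exponent in Lemma~\ref{lemma:uniqueness:supersol} large enough that $2\gamma+2$ exceeds the growth exponent in \eqref{thm:growthinx}, together with the associated $\kappa$, so that $w^\theta:=\bar w+\Lambda$ is again an lsc supersolution of \eqref{PDE1}--\eqref{PDE2} for every $\theta>0$. By \eqref{thm:growthinx} the map $(t,x)\mapsto\bar u_i(t,x)-w^\theta_i(t,x)$ is coercive in $x$ uniformly in $(t,i)$, and by \eqref{uniqueness:hyp:i_to_infty} (together with $\Lambda\ge0$) $m^\theta_j:=\sup_{[0,T]\times\R^d}(\bar u_j-w^\theta_j)\to0$ as $|j|\to\infty$; hence for $\theta$ small, $\sup_jm^\theta_j>0$ and is attained on a finite set of labels. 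I would then pick $i_0$ a maximizer of $j\mapsto m^\theta_j$ of \emph{maximal length} $|i_0|$, so that
\[
m^\theta_{i_0 k}\;<\;m^\theta_{i_0}=\sup_j m^\theta_j\qquad\text{for all }0\le k<\bar K ,
\]
the strict gap at the children labels being the device that defeats the branching obstacle. The Lipschitz (possibly non-monotone) dependence of $\lambda_Y$ on the value variable is treated, as is standard for parabolic problems with a prescribed terminal value, by a Gronwall argument on $t\mapsto\sup_{x,i}(\bar u_i-\bar w_i)(t,x)$; below I treat $F$ as proper in that variable (it is value-independent in the model of Section~\ref{example}).

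First I note that, testing $w^\theta$ against test functions that touch each child $w^\theta_{ik}$ from below up to an arbitrarily small error and letting it vanish, the supersolution property gives the pointwise inequality $w^\theta_i\ge\max_{0\le k<\bar K}w^\theta_{ik}$; on the subsolution side only the alternative inside the $\min$ is available. Next I dispose of the terminal time: running the same coercivity/localization for $(x,i)\mapsto\bar u_i(T,x)-w^\theta_i(T,x)$ and invoking \eqref{PDE2} for $\bar u$ and \eqref{PDE2:uniqueness} for $w^\theta$ (recall from Remark~\ref{rem-kernel-nonempty} that $\delta\ge0$ and $F^*<\infty$ under Assumption~\ref{Assumption_H_3}, so the $\delta$-obstacle is inactive on the supersolution side and the indicator in \eqref{PDE2} equals $1$) yields $\bar u_i(T,\cdot)\le w^\theta_i(T,\cdot)$: at a maximizing $(x_0,i_0)$, either the $g$-obstacle is active and $\bar u_{i_0}(T,x_0)\le g_{i_0}(x_0)\le w^\theta_{i_0}(T,x_0)$, or the branching obstacle is active and the gap above closes the argument, following the scheme of \cite{BouchardElieTouzi}. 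This reduces the claim to comparison on $[0,T)\times\R^d\times\Ic$ with this terminal inequality as boundary data. Fixing $i_0$, for $\eps,\rho>0$ I consider
\[
\Phi_\eps(t,x,y)=\bar u_{i_0}(t,x)-w^\theta_{i_0}(t,y)-\frac{|x-y|^2}{2\eps}-\frac{\rho}{T-t},
\]
whose maximum is attained (coercivity from $\Lambda$) at some $(t_\eps,x_\eps,y_\eps)$ with $\eps^{-1}|x_\eps-y_\eps|^2\to0$ and, along a subsequence, $(t_\eps,x_\eps,y_\eps)\to(\hat t,\hat x,\hat x)$ with $\hat t<T$. Ishii's lemma furnishes $(p_\eps,X_\eps)\in\overline{\mathcal P}^{2,+}\bar u_{i_0}(t_\eps,x_\eps)$ and $(p_\eps,Y_\eps)\in\overline{\mathcal P}^{2,-}w^\theta_{i_0}(t_\eps,y_\eps)$ with $p_\eps=\eps^{-1}(x_\eps-y_\eps)$, time-jets differing by $\rho(T-t_\eps)^{-2}>0$, and $X_\eps\le Y_\eps$ up to $O(\eps^{-1}|x_\eps-y_\eps|^2)$; these are completed to admissible label-indexed test families by padding, for $j\ne i_0$, with fixed smooth functions dominating $\bar u_j$ from above (resp.\ $w^\theta_j$ from below), all controlled by one continuous $\bar\varphi$, which is possible by \eqref{thm:growthinx}--\eqref{uniqueness:hyp:i_to_infty}.

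Writing the two viscosity inequalities and using the $\min$ structure of \eqref{PDE1}, two cases occur. If the branching term is the one that is $\le0$ on the subsolution side, then letting the children test functions touch $\bar u_{i_0k}$ from above gives $\bar u_{i_0}(t_\eps,x_\eps)\le\max_k\bar u_{i_0k}(t_\eps,x_\eps)$; combined with $w^\theta_{i_0}(t_\eps,y_\eps)\ge\max_kw^\theta_{i_0k}(t_\eps,y_\eps)$ and a $k^*$ realizing the left maximum, sending $\eps\downarrow0$ ($\bar u$ usc, $w^\theta$ lsc, $x_\eps,y_\eps\to\hat x$) and then $\rho\downarrow0$ gives $m^\theta_{i_0}\le m^\theta_{i_0k^*}$, contradicting the gap. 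Otherwise the second-order term is active on both sides; by Assumption~\ref{Assumption_H_3}(i), $F=\mathcal L+G$ with $\mathcal L(x,p,M)=-\lambda(x)^\top p-\tfrac12\mathrm{Tr}(\sigma\sigma^\top(x)M)$ continuous and $G(x,y,p)=\sup_{a\in\Nc(x,p)}\lambda_Y(x,y,a)$ the only discontinuous part. Subtracting the inequalities, the $\mathcal L$-difference is controlled by the Lipschitz bounds of Assumption~\ref{Assumption_H_0}(iii) and Ishii's matrix inequality, hence is $O(\eps^{-1}|x_\eps-y_\eps|^2)\to0$, while the $G^*$-versus-$G_*$ discrepancy at the two nearby arguments (which share the gradient $p_\eps$) is bounded, using \eqref{main:F*_liminf_F0}, Assumption~\ref{Assumption_H_3}(iii) (Hausdorff-Lipschitz continuity of $(x,p)\mapsto\Nc_\eps(x,p)$, with the $(1+|x|)$ factor) and Assumption~\ref{Assumption_H_3}(ii) ($w(r)\le Cr^\eta$), by $C(|x_\eps-y_\eps|+\eps)^\eta(1+|x_\eps|)+C|x_\eps-y_\eps|\to0$; the time-penalty slack $\rho(T-t_\eps)^{-2}>0$ then produces the contradiction. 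Letting $\eps\downarrow0$, then $\rho\downarrow0$, then $\theta\downarrow0$, and combining with the terminal step, one gets $\bar u_i\le\bar w_i$ on $[0,T]\times\R^d\times\Ic$.

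The main obstacle is the label-indexed obstacle term $\bar v_i-\sup_{0\le k<\bar K}\bar v_{ik}$: the definition of viscosity solution forces the test family $(\varphi_j)_j$ to sandwich the solution at \emph{all} labels simultaneously under one continuous bound $\bar\varphi$, so one cannot simply ``touch at the children $i_0k$''. The resolution — pushing the pointwise parent-dominates-children inequality through on the supersolution side, and on the subsolution side selecting $i_0$ of maximal length among maximizers so that the branching branch of the $\min$ becomes a strict inequality at the children — is what makes that case self-defeating. The discontinuity of $F$ stemming from the non-compact control set is, by contrast, absorbed exactly as in \cite{SonerTouzi,BouchardElieTouzi} via Assumption~\ref{Assumption_H_3}(iii) and the perturbation $\Lambda$ of Lemma~\ref{lemma:uniqueness:supersol}; the residual work is the bookkeeping needed to keep the padded test families, the $\Lambda$-perturbation and the polynomial $x$-growth mutually compatible.
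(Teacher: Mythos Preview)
Your overall strategy---perturb the supersolution by $\Lambda$ via Lemma~\ref{lemma:uniqueness:supersol}, localize in $x$ by coercivity and in $i$ by \eqref{uniqueness:hyp:i_to_infty}, then double variables and split according to which branch of the $\min$ in \eqref{PDE1} is active on the subsolution side---matches the paper. Two of your tactical choices are genuinely different and both work: (a) you force strictness at the children by selecting $i_0$ of \emph{maximal length} among maximizers of $j\mapsto m^\theta_j$, whereas the paper adds the further perturbation $\eps/2^{|i|}$ to the supersolution so that the branching inequality becomes $\tilde w_{i_0}-\sup_k\tilde w_{i_0k}\ge \eps/2^{|i_0|+1}>0$ pointwise; (b) you manufacture the final strict sign via a $\rho/(T-t)$ penalty, whereas the paper obtains it from a discount term $\eta w_i$ created by the change of unknown $w_i\mapsto e^{(\eta+\eta')t}w_i$. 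Your separate terminal step corresponds to the paper's Step~5.

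There is, however, one genuine gap that you yourself flag and then set aside: the operator $F$ is \emph{not} proper in the value variable, since $\lambda_Y(x,y,a)$ is only Lipschitz in $y$. Your sentence ``below I treat $F$ as proper in that variable'' is precisely the missing piece, and without it the subtraction of the two viscosity inequalities in the second-order case does not close: the $G$-difference contributes a term of size $L\,|\bar u_{i_0}(t_\eps,x_\eps)-w^\theta_{i_0}(t_\eps,y_\eps)|\sim L\,m^\theta_{i_0}>0$ on the wrong side, which swamps the slack $\rho/(T-t_\eps)^2$ for small $\rho$. The paper fixes this \emph{before} doubling by the exponential change of unknown $\tilde u_i=\beta_t\bar u_i$, $\tilde w_i=\beta_t\bar w_i$ with $\beta_t=e^{(\eta+\eta')t}$: this replaces $\lambda_Y$ by $\tilde\lambda_Y(t,x,y,a)=\beta_t\lambda_Y(x,\beta_t^{-1}y,a)+\eta' y$, and choosing $\eta'$ larger than the Lipschitz constant of $\lambda_Y$ in $y$ makes $\tilde\lambda_Y$ (hence $\tilde F$) nondecreasing in $y$, while the remaining $\eta$ produces the strict discount $\eta(\tilde u_{i_0}-\tilde w_{i_0})$ that yields the contradiction. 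Your alluded-to ``Gronwall argument on $t\mapsto\sup_{x,i}(\bar u_i-\bar w_i)(t,x)$'' is morally the same device, but it must actually be carried out (and is not automatic here because the obstacle term couples different labels); the cleanest route is the $\beta_t$-transformation the paper uses.
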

\begin{proof} We proceed in six steps.

\paragraph{Step 1.} We define $\Lambda_{\theta,\kappa}(t,x) = \theta e^{- \kappa t} (1+|x|^{2 \gamma +2})$ for $(t,x)\in[0,T]\times\R^d$   with $\theta,\kappa\in\R_+$. From Lemma \ref{lemma:uniqueness:supersol}, there exist $\kappa$ large enough such that for any $\theta> 0$, $\bar w_\cdot+\Lambda_{\theta,\kappa}$ is also a supersolution for \eqref{PDE1}-\eqref{PDE2}. Set $\bar w_{i,\theta,\kappa}(t,x)=\bar w_i(t,x)+\Lambda_{\theta,\kappa}(t,x)$, $(i,t,x)\in\Ic\times[0,T]\times\R^d$.

For some $\eta,~\eta'>0$ to be chosen below, let $\beta_t = e^{(\eta+\eta')t}$ for $t\in[0,T]$. A straightforward derivation shows that $\beta_t \bar{w}_{i,\theta,\kappa}$ (resp. $\beta_t \bar{u}_i$) is a viscosity supersolution (resp. subsolution) to
\beq
\min\Bigg\{
\eta w_i -\partial_t w_i +
\tilde{F}(t,x,w_i,D  w_i) - \lambda^\top D  w_i - \frac{1}{2}\text{Tr}\left(\sigma\sigma^\top D ^2 w_i\right)
~;&&\nonumber\\
w_{i}-\sup_{0\leq k<\bar{K}}w_{ik}\Bigg\} &=&0 \mbox{ on } [0,T]\times\R^d,~~\label{uniqueness:new_PDE1}\\
\min\left\{ w_i  -\tilde{g}~;\delta w_{i}~;~ w_{i}-\sup_{0\leq k<\bar{K}}w_{ik} \right\}
&=&0\mbox{ on }\{T\}\times \R^d\;.\label{uniqueness:new_PDE2}
\enq
where
\beqs
\tilde{F}(t,x,y,p) = \sup_{a \in \tilde{\Nc_0}(t,x,p)} \tilde{\lambda}_{Y}(x,y,a) &,&\tilde{\Nc_0}(t,x,p)=\Nc_0(x,\beta_t^{-1}p)~,
\\
\tilde{\lambda}_{Y}(t,x,y,a) = \beta_t \lambda_{Y}(x,\beta_t^{-1}y,a)+\eta' y~&,& \tilde{g}_i(x)=\beta_T g_i(x) ~,
\\
\enqs
for all $(t,x,i,y,p,a)\in [0,T] \times \R^d\times \Ic \times\R\times\R^d\times A$. Since $\lambda_{Y}$ is Lipschitz, we can choose $\eta'$ large enough so that  $\tilde{\lambda}_{Y}$ and, consequently, $\tilde{F}$ are nondecreasing in $y$.

Let $\eps>0$. From an analogous computation, using the monotonicity of $\tilde F$, we see that $\beta_t \bar{w}_i+\eps/2^{|i|}$ is a viscosity supersolution to
\beq \label{uniqueness:new_PDE1_w}
\eta w_i -\partial_t w_i +
\tilde{F}(t,x,w_i,D  w_i) - \lambda(x)^\top D  w_i - \frac{1}{2}\text{Tr}\left(\sigma\sigma(x)^\top D ^2 w_i\right) &\geq& 0~,\\
\min\left\{ w_i (T,\cdot) -\tilde{g}~;~\delta w_{i}\right\}
&\geq&0 ~,\label{uniqueness:new_PDE2_w}\\
w_{i}-\sup_{0\leq k<\bar{K}}w_{ik} \geq \frac{\eps}{2^{|i|+1}}
=: \Delta_i &>&0~.\label{uniqueness:new_PDE_jump_w}
\enq

\paragraph{Step 2.} Set $\tilde{u}_i = \beta_t \bar{u}_i$ and $\tilde{w}_{i,\theta,\kappa,\eps} = \beta_t \bar{v}_i + \beta_t \Lambda_{\theta,\kappa} + \eps/2^{|i|}=\beta_t \bar{w}_{i,\theta,\kappa} + \eps/2^{|i|}$ .
To prove our result, it is enough to show that
\beqs
\tilde{u}_i(t,x) & \leq &  \tilde{w}_{i,\theta,\kappa,\eps}(t,x)
\enqs
for each $(i,t,x)\in\Ic\times[0,T]\times\R^d$ and $\theta,\eps>0$. Then taking the limit as $\theta\to0$ and $\varepsilon\to0$, we obtain the desired result. For simplicity, we write $\tilde{w}_{i,\theta,\kappa,\eps}$ for $\tilde w_{i}$ in the sequel.
We argue by contradiction and suppose that
\beq\label{uniqueness:u_minus_v_positive}
\sup_{\Ic\times[0,T]\times\R^d}\tilde u_{\cdot} -\tilde w_\cdot >0~.
\enq
Due to the growth condition on $\tilde{u}_\cdot$ and $\tilde{w}_\cdot$, there exist $R > 0$ such that
\beq\label{uniqueness:negative_norm_geq_R}
 \tilde{u}_{i}(t, x) - \tilde{w}_i(t, x)  & <  & 0
\enq
for all $(i,t,x)\in \Ic \times [0,T] \times \R^d$ such that $|x| \geq R$. Then from \eqref{uniqueness:hyp:i_to_infty} and since $u_{\cdot} -\tilde w_\cdot$ is upper semicontinuous, there exist $(i_0,t_0,x_0)\in \Ic\times[0,T] \times \R^d$ such that
\beq \label{uniqueness:x0_max_point}
\sup_{(i,t,x)\in \Ic \times [0,T] \times \R^d}(\tilde{u}_i-\tilde{w}_{i})(t,x) &  =  & (\tilde{u}_{i_0}-\tilde{w}_{i_0})(t_0,x_0)~>~0\;.
\enq

\paragraph{Step 3.} For $n\geq 1$, we define the function
\beqs
\Theta_n(t, x, y,i) = \tilde{u}_{i}(t,x) -\tilde{w}_{i}(t,y)-\varphi_n(t,x,y,i)
\enqs
with
\beqs
\varphi_n(t,x,y,i)=n|x-y|^2+|x-x_0|^4+|t-t_0|^2+\1_{i\neq i_0}.
\enqs
for all $(t, x, y,i) \in [0, T ] \times\R^d \times\R^d\times\Ic$.
By the growth assumption on $\tilde{u}$ and $\tilde{v}$ and \eqref{uniqueness:hyp:i_to_infty}, for all $n$, there exists $(t_n, x_n, y_n, i_n)\in[0,T]\times \R^d\times \R^d\times \Ic$ attaining the maximum of $\Theta_n$ on $[0,T]\times \R^d\times \R^d\times \Ic$. We have
\beqs
\Theta_n(t_n, x_n, y_n,i_n) \geq \Theta_n(t_0,x_0,x_0,i_0)=(\tilde{u}_{i_0}-\tilde{w}_{i_0})(t_0,x_0)~.
\enqs
By \eqref{uniqueness:negative_norm_geq_R} and \eqref{uniqueness:hyp:i_to_infty}, up to a subsequence, $(t_n, x_n, y_n, i_n)$ converge to $(\hat{t}, \hat{x}, \hat{y}, \hat{i})$. Sending $n$ to infinity, this provides
\beqs
\bar \ell := \limsup_{n\to\infty} \varphi_n(t_n, x_n, y_n,i_n)
&\leq& \limsup_{n\to\infty}
[\tilde{u}_{i_n}(t_n,x_n) -\tilde{w}_{i_n}(t_n,y_n)-
(\tilde{u}_{i_0}-\tilde{w}_{i_0})(t_0,x_0)] \\
&\leq&\tilde{u}_{\hat{i}}(\hat{t},\hat{x}) -\tilde{w}_{\hat{i}}(\hat{t},\hat{y})-
(\tilde{u}_{i_0}-\tilde{w}_{i_0})(t_0,x_0)
~.
\enqs
In particular, $\bar \ell < +\infty$ and $\hat{x}=\hat{y}$. Using the definition of $(t_0,x_0,i_0)$ as a maximizer of
$\tilde{u}_{\cdot}-\tilde{w}_{\cdot}$
, we see that:
\beqs
0  & \leq & \bar \ell ~~\leq~~
(\tilde{u}_{\hat{i}}-\tilde{w}_{\hat{i}})(\hat{t},\hat{x}) - (\tilde{u}_{i_0}-\tilde{w}_{i_0})(t_0,x_0)
~~\leq~~ 0~,
\enqs
which implies
\beq
(t_n, x_n, y_n,i_n) &\to& (t_0,x_0,x_0,i_0)~, \label{uniqueness:conv_x_n_etc}
\\
n |x_n- y_n|^2&\to& 0~,\label{uniqueness:ordre_conv_xn_yn}\\
\tilde{u}_{i_n}(t_n,x_n) -\tilde{w}_{i_n}(t_n,y_n) &\to& (\tilde{u}_{i_0}-\tilde{w}_{i_0})(t_0,y_0)~.\label{uniqueness:conv_V_minus_U}
\enq
Being $\Ic$ endowed with the discrete topology, we can assume $i_n=i_0$ for all $n\geq 1$.

\paragraph{Step 4.} We now show that for $n$ large enough
\beq\label{uniqueness:2nd_eq_geq_0}
\tilde{u}_{i_0}(t_n,x_n) - \sup_{0\leq k<\bar{K}}\tilde{u}_{i_0k}(t_n,x_n) & > & 0~.
\enq
On the contrary, up to a subsequence, we would have for all $n$,
\beq \label{uniqueness:part3_1}
\tilde{u}_{i_0}(t_n,x_n) - \sup_{0\leq k<\bar{K}}\tilde{u}_{i_0k}(t_n,x_n)  & \leq &  0~.
\enq
Moreover, by the viscosity supersolution property of $\tilde{w}$ to \eqref{uniqueness:new_PDE_jump_w}, we have
\beqs
\tilde{w}_{i_0}(t_n,y_n) - \sup_{0\leq k<\bar{K}}\tilde{w}_{i_0k}(t_n,y_n)~~\geq~~ \Delta_{i_0} & > & 0~.
\enqs
We deduce from the two previous inequalities
\beq
\tilde{u}_{i_0}(t_n,x_n) - \sup_{0\leq k<\bar{K}}\tilde{u}_{i_0k}(t_n,x_n) &\leq& \tilde{w}_{i_0}(t_n,y_n) - \sup_{0\leq k<\bar{K}}\tilde{w}_{i_0k}(t_n,y_n) - \Delta_{i_0} \nonumber\\
\tilde{u}_{i_0}(t_n,x_n) -\tilde{w}_{i_0}(t_n,y_n) + \Delta_{i_0} &\leq & \sup_{0\leq k<\bar{K}}\tilde{u}_{i_0k}(t_n,x_n) - \sup_{0\leq k<\bar{K}}\tilde{w}_{i_0k}(t_n,y_n)\nonumber\\
&\leq & \sup_{0\leq k<\bar{K}}[\tilde{u}_{i_0k}(t_n,x_n) - \tilde{w}_{i_0k}(t_n,y_n)]\;. \label{uniqueness:part3:sup}
\enq
Since $\Delta_{i_0} >0$, for all $n$ there exists $k_n$ such that
\beqs
\sup_{0\leq k<\bar{K}}[\tilde{u}_{i_0k}(t_n,x_n) - \tilde{w}_{i_0k}(t_n,y_n)] - \frac{\Delta_{i_0}}{2} \leq \tilde{u}_{i_0k_n}(t_n,x_n) - \tilde{w}_{i_0k_n}(t_n,y_n)~.
\enqs
From \eqref{uniqueness:hyp:i_to_infty}, up to a subsequence, we may assume that $(k_n)$ converges to $k_0$ in $\N$. Hence, by sending $n$ to infinity into  \eqref{uniqueness:part3:sup}, it follows with \eqref{uniqueness:conv_V_minus_U} and the upper (resp. lower)-semicontinuity of $\tilde{u}$ (resp. $\tilde{w}$) that :
\beqs
(\tilde{u}_{i_0}-\tilde{w}_{i_0})(t_0,x_0) + \frac{\Delta_{i_0}}{2} \leq (\tilde{u}_{i_0 k_0}-\tilde{w}_{i_0 k_0})(t_0,x_0) ~,
\enqs
which is a contradiction to \eqref{uniqueness:x0_max_point}.

\paragraph{Step 5.} Let us check that, up to a subsequence, $t_n < T$ for all $n$. On the contrary, $t_n = t_0 = T$ for $n$ large enough, and from \eqref{uniqueness:2nd_eq_geq_0}, and the viscosity subsolution property of $\tilde{u}$ to \eqref{uniqueness:new_PDE2}, we would get
\beqs
\tilde{u}_{i_0}(T,x_n) \leq \tilde{g}_{i_0}(x_n)~.
\enqs
On the other hand, by the viscosity supersolution property of $\tilde{w}$ to \eqref{uniqueness:new_PDE2}, we have $\tilde{w}(T, y_n)\geq \tilde{g}_{i_0}(y_n)$, and so
\beqs
\tilde{u}_{i_0}(T,x_n) - \tilde{w}(T, y_n) \leq \tilde{g}_{i_0}(x_n) - \tilde{g}_{i_0}(y_n)~.
\enqs
By sending $n$ to infinity, and from Assumption \eqref{Assumption_H_1} and \eqref{uniqueness:conv_V_minus_U}, this would imply $\tilde{u}_{i_0}(t_0,x_0) - \tilde{v}(t_0, x_0) \leq 0$, a contradiction to \eqref{uniqueness:u_minus_v_positive}.

\paragraph{Step 6.} We may then apply Ishii's lemma (see Theorem 8.3 in \cite{crandall1992users}) to $(t_n, x_n, y_n)\in [0,T)\times\R^d\times\R^d$ that attains the maximum of $\Theta_n(.,{i_0})$ and we get $(p_{\tilde{u}}^n, q_{\tilde{u}}^n , M_n ) \in \bar J^{2,+}\tilde{u}_{i_0}(t_n, x_n)$ and $(p_{\tilde{U}}^n, q_{\tilde{v}}^n , N_n ) \in \bar J^{2,-}\tilde{v}_{i_0}(t_n, y_n)$ such that
\beqs
p_{\tilde{u}}^n - p_{\tilde{v}}^n &=& \partial_t \varphi_n(t_n, x_n, y_n,i_0) = 2(t_n-t_0)
~,\\
q_{\tilde{u}}^n &=&D_{x} \varphi_n(t_n, x_n, y_n,i_0) = n(x_n-y_n)+4(x_n-x_0)|x_n-x_0|^2~,\\
q_{\tilde{w}}^n &=& -D_{y} \varphi_n(t_n, x_n, y_n,i_0) = n(x_n-y_n)~,
\enqs
and
\beq\label{uniqueness:A_n}
\begin{pmatrix}
M_n & 0\\
0 & -N_n
\end{pmatrix} & \leq & A_n + \frac{1}{2n} A_n^2~,
\enq
where
\beqs
A_n = D_{(x,y)}^2 \varphi_n(t_n, x_n, y_n,i_0)= n \begin{pmatrix}
\I_{d} & -\I_{d}\\
 -\I_{d} & \I_{d}
\end{pmatrix} - \begin{pmatrix}
4|x_n-x_0|^2\I_{d} + 8 (x_n-x_0)(x_n-x_0)^\top& \mathds{O}_d\\
\mathds{O}_d & \mathds{O}_d
\end{pmatrix}~,
\enqs
with $\I_{d}$ and $\mathds{O}_d$  respectively  the identity and  the zero  matrix of $R^{d\times d}$. We can therefore bound the right-hand side of \eqref{uniqueness:A_n} by
\beq\label{uniqueness:bound_matrices}
A_n + \frac{1}{2n} A_n^2  & \leq &  3n\begin{pmatrix}
\I_{d} & -\I_{d}\\
 -\I_{d} & \I_{d}
\end{pmatrix} + \begin{pmatrix}
A_n'& \mathds{O}_d\\
\mathds{O}_d & \mathds{O}_d
\end{pmatrix}~,
\enq
with $A_n'$ such that $\limsup_{n\to\infty}\frac{1}{|x_n-x_0|^2}|A_n'|<+\infty$.
From the viscosity supersolution property of $\tilde{w}_{i_0}$ to \eqref{uniqueness:new_PDE1}, we have
\beqs
\eta \tilde{w}_{i_0}(t_n, y_n) - p_{\tilde{w}}^n +
\tilde{F}^*(t_n, y_n,\tilde{w}_{i_0}(t_n, y_n),q_{\tilde{w}}^n )
- \lambda(y_n) ^\top q_{\tilde{w}}^n -\frac{1}{2} \textrm{Tr}(\sigma\sigma^\top(y_n)N_n)\geq 0~.
\enqs
On the other hand, from \eqref{uniqueness:2nd_eq_geq_0} and the viscosity subsolution property of $\tilde{u}$ to \eqref{uniqueness:new_PDE1}, we have
\beqs
\eta \tilde{u}_{i_0}(t_n, x_n) - p_{\tilde{u}}^n +
\tilde{F}_*(t_n, x_n,\tilde{u}_{i_0}(t_n, x_n), q_{\tilde{u}}^n )- \lambda(x_n)^\top q_{\tilde{u}}^n -\frac{1}{2} \textrm{Tr}(\sigma\sigma^\top(x_n)M_n) \leq 0~.
\enqs
By subtracting the two previous inequalities, we obtain
\beq
\eta( \tilde{u}_{i_0}(t_n, x_n) - \tilde{w}_{i_0}(t_n, y_n) ) &\leq&
p_{\tilde{u}}^n - p_{\tilde{w}}^n +\tilde{F}^*(t_n, y_n,\tilde{w}_{i_0}(t_n, y_n),q_{\tilde{w}}^n )-\tilde{F}_*(t_n, x_n,\tilde{u}_{i_0}(t_n, x_n), q_{\tilde{u}}^n )+\nonumber\\
&&
+\lambda(x_n)^\top q_{\tilde{u}}^n - \lambda(y_n) ^\top q_{\tilde{w}}^n
+\frac{1}{2} \textrm{Tr}(\sigma\sigma^\top(x_n)M_n)-\frac{1}{2} \textrm{Tr}(\sigma\sigma^\top(y_n)N_n)\nonumber\\
&=&p_{\tilde{u}}^n - p_{\tilde{u}}^n + \Delta C_n^1+\Delta C_n^2+\Delta C_n^3
\label{uniqueness:differential_contradiction}
\enq
where
\beqs
\Delta C_n^1 &=&\tilde{F}^*(t_n, y_n,\tilde{w}_{i_0}(t_n, y_n),q_{\tilde{w}}^n )-\tilde{F}_*(t_n, x_n,\tilde{u}_{i_0}(t_n, x_n), q_{\tilde{u}}^n ) ~,\\
\Delta C_n^2 &=& \lambda(x_n)^\top q_{\tilde{u}}^n - \lambda(y_n) ^\top q_{\tilde{w}}^n~,\\
\Delta C_n^3 &=& \frac{1}{2} \textrm{Tr}(\sigma\sigma^\top(x_n)M_n)-\frac{1}{2} \textrm{Tr}(\sigma\sigma^\top(y_n)N_n)~.
\enqs
From \eqref{uniqueness:conv_x_n_etc}),
we have $p_{\tilde{u}}^n - p_{\tilde{u}}^n \to 0$ as $n\to 0$. From the Lipschitz continuity of $\lambda$ and \eqref{uniqueness:ordre_conv_xn_yn}, we have $\Delta C^2 \to 0$ as $n\to 0$. From \eqref{uniqueness:bound_matrices}, \eqref{uniqueness:conv_x_n_etc}, \eqref{uniqueness:ordre_conv_xn_yn}, and the Lipschitz property of $\sigma$, we also have $\Delta C^3 \to 0$ as $n\to 0$. Following the same argument as in the proof of lemma \ref{lemma:uniqueness:supersol}, we get from Assumptions \ref{Assumption_H_0} and \ref{Assumption_H_3} and \eqref{uniqueness:conv_x_n_etc} $\Delta C^1_n \to 0$ as $n\to\infty$.

Therefore, by sending $n \to \infty$ into \eqref{uniqueness:differential_contradiction}, we conclude with \eqref{uniqueness:conv_V_minus_U} that $\eta( \tilde{u}_{i_0}(t_0, x_0) - \tilde{w}_{i_0}(t_0, y_0) ) \leq 0$, a contradiction with \eqref{uniqueness:x0_max_point}.
\end{proof}

From Theorems \ref{theorem:result_PDE},
\ref{theorem:result_PDET} and
\ref{Thm:comparison}, we get the following characterisation of the function $\bar v$.

\begin{Corollary} Suppose that $\bar v$
satisfies
\beq\label{uniqueness:hyp:growth}
\sup_{(t,x,i)\in [0,T] \times \R^d\times \Ic} \frac{|\bar{v}_i(t,x)|+|\bar{u}_i(t,x)|}{1+|x|^\gamma}< +\infty~ \;,
\enq
for some $\gamma>0$ and
\beq\label{uniqueness:hyp:i_to_infty2}
\sup_{(t,x)\in [0,T] \times \R^d} |\bar{v}_i(t,x)|+|\bar{u}_i(t,x)| \xrightarrow[|i|\to \infty]{} 0\;.
\enq
Under Assumptions \ref{Assumption_H_0}-\ref{Assumption_H_1}-\ref{Assumption_H_2}-\ref{Assumption_H_3}, $\bar{v}$ is the unique viscosity solution to \eqref{PDE1}-\eqref{PDE2:uniqueness} satisfying \eqref{uniqueness:hyp:growth}-\eqref{uniqueness:hyp:i_to_infty2}. Moreover, $\bar{v}$ is continuous on $[0,T)\times\R^d\times\Ic$.
\end{Corollary}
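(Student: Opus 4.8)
The plan is to obtain this as a direct consequence of the interior viscosity property (Theorem \ref{theorem:result_PDE}), the terminal viscosity property (Theorem \ref{theorem:result_PDET}) and the comparison principle (Theorem \ref{Thm:comparison}), glued together by the usual semicontinuous-envelope argument.

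First I would observe that the growth condition \eqref{uniqueness:hyp:growth} forces $\bar v$ to be locally bounded on $[0,T]\times\R^d\times\Ic$, so the standing hypothesis of Theorems \ref{theorem:result_PDE} and \ref{theorem:result_PDET} holds; under Assumptions \ref{Assumption_H_0}, \ref{Assumption_H_1} and \ref{Assumption_H_2} these give that $\bar v$ is a viscosity solution to \eqref{PDE1} on $[0,T)\times\R^d\times\Ic$ and to \eqref{PDE2} on $\{T\}\times\R^d\times\Ic$. Next I would use Assumption \ref{Assumption_H_3}: by Remark \ref{rem-kernel-nonempty} one has $\Nc_\eps(x,p)\neq\emptyset$ for every $(\eps,x,p)$, hence $\delta\varphi\geq 0$ for any admissible test function $\varphi$, so the terminal supersolution requirement in \eqref{PDE2} collapses to the reduced form \eqref{PDE2:uniqueness} (the subsolution requirement being unchanged). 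Thus $\bar v$ is a viscosity solution to \eqref{PDE1}-\eqref{PDE2:uniqueness} in exactly the sense handled by Theorem \ref{Thm:comparison}.

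The core is then an envelope comparison. Since $\bar v$ is a viscosity subsolution, its upper-semicontinuous envelope $\bar v^*$ satisfies the subsolution inequalities, and likewise $\bar v_*$ satisfies the supersolution inequalities; moreover both $\bar v^*$ and $\bar v_*$ inherit the estimates \eqref{uniqueness:hyp:growth}-\eqref{uniqueness:hyp:i_to_infty2}, because they are limits of values of $\bar v$ and the relevant bounding quantities behave continuously in $x$ and vanish as $|i|\to\infty$. Hence the pair $(\bar v_*,\bar v^*)$ meets the hypotheses \eqref{thm:growthinx}-\eqref{uniqueness:hyp:i_to_infty} of Theorem \ref{Thm:comparison}, which yields $\bar v^*\leq\bar v_*$ on $[0,T]\times\R^d\times\Ic$. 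On $[0,T)\times\R^d\times\Ic$ one always has $\bar v_*\leq\bar v\leq\bar v^*$ (a point with $t<T$ is approached by points with $t'<T$), so $\bar v_*=\bar v=\bar v^*$ there; since $\Ic$ carries the discrete topology, this says precisely that $\bar v$ is continuous on the parabolic interior $[0,T)\times\R^d\times\Ic$.

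For uniqueness I would take any other viscosity solution $\bar u$ of \eqref{PDE1}-\eqref{PDE2:uniqueness} satisfying \eqref{uniqueness:hyp:growth}-\eqref{uniqueness:hyp:i_to_infty2}; then $\bar u^*$ is a usc subsolution and $\bar u_*$ a lsc supersolution with the required growth. Applying Theorem \ref{Thm:comparison} once to $(\bar v_*,\bar u^*)$ and once to $(\bar u_*,\bar v^*)$ gives $\bar u^*\leq\bar v_*$ and $\bar v^*\leq\bar u_*$ on $[0,T]\times\R^d\times\Ic$; combined with the trivial inequalities $\bar v_*\leq\bar v\leq\bar v^*$ and $\bar u_*\leq\bar u\leq\bar u^*$ on $[0,T)\times\R^d\times\Ic$, this produces the chain $\bar v\leq\bar v^*\leq\bar u_*\leq\bar u\leq\bar u^*\leq\bar v_*\leq\bar v$, so $\bar u=\bar v$ on $[0,T)\times\R^d\times\Ic$. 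I do not expect a serious obstacle here, since the statement is a packaging of the earlier theorems; the only points needing care are checking that the growth conditions genuinely pass to the envelopes $\bar v_*,\bar v^*$, invoking Remark \ref{rem-kernel-nonempty} so that the terminal condition \eqref{PDE2} proved in Theorem \ref{theorem:result_PDET} matches the form \eqref{PDE2:uniqueness} required by the comparison theorem, and noting that the restriction of continuity and of the identification $\bar u=\bar v$ to $[0,T)\times\R^d\times\Ic$ is intrinsic, because the semicontinuous envelopes are taken only over $t'<T$ and nothing can be said pointwise at $t=T$ beyond the relaxed terminal condition.
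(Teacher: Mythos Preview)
Your proposal is correct and follows exactly the route the paper intends: the Corollary is stated immediately after Theorem \ref{Thm:comparison} as a direct consequence of Theorems \ref{theorem:result_PDE}, \ref{theorem:result_PDET} and \ref{Thm:comparison}, with no further argument given, and you have spelled out precisely the standard envelope-comparison packaging (including the use of Remark \ref{rem-kernel-nonempty} to reduce \eqref{PDE2} to \eqref{PDE2:uniqueness}) that the paper leaves implicit.
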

We recall that Section \ref{example} provides an example of a value function satisfying conditions \eqref{PDE1}-\eqref{PDE2:uniqueness}.
\appendix

\section{Appendix}
\subsection{Set of atomic finite measures}
\begin{Proposition}\label{PropEclosed}
For $\ell\geq 1$, $E_\ell $ is a closed subset of $\mathcal{M}_F(\Ic \times \mathbb{R}^\ell)$ for the topology of the weak convergence of measures.
\end{Proposition}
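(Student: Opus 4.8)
The plan is to take a sequence $(\nu_n)_{n\ge1}$ in $E_\ell$ with $\nu_n\to\nu$ weakly in $\mathcal{M}_F(\Ic\times\R^\ell)$ and to show $\nu\in E_\ell$ by first confining the atoms of all the $\nu_n$ to a single compact set, and then extracting convergent subsequences of their labels and positions. Two elementary geometric facts are recorded at the outset. Any two distinct labels satisfy $d_\Ic(i,j)\ge 1$, since in the defining sum every term $i_\ell+1$ or $j_{\ell'}+1$ is $\ge1$ and at least one such term occurs; hence $(\Ic,d_\Ic)$ is uniformly discrete, every compact subset of $\Ic$ is finite, and consequently every compact subset $K$ of $\Ic\times\R^\ell$ is contained in a set $F\times\overline{B}(0,R)$ with $F\subset\Ic$ finite and $R<\infty$ (the projection of $K$ to $\Ic$ is a finite set $F$, and for each $i\in F$ the slice $\{x:(i,x)\in K\}$ is compact in $\R^\ell$, hence bounded). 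Also, writing each $\nu_n=\sum_{i\in V_n}\delta_{(i,x^n_i)}$ with $V_n\subset\Ic$ finite and $i\nprec j$ on $V_n$, the measure $\nu_n$ is a sum of $|V_n|$ Dirac masses at distinct points, each of mass $1$.

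Next I would use weak convergence twice. Testing against the constant function $1$ gives $|V_n|=\nu_n(\Ic\times\R^\ell)\to\nu(\Ic\times\R^\ell)$; since the left side is an integer and the right side is finite (as $\nu\in\mathcal{M}_F$), there is $N\in\N$ with $|V_n|=N$ for all large $n$ and $\nu(\Ic\times\R^\ell)=N$. Moreover a weakly convergent sequence of finite measures with uniformly bounded masses on the Polish space $\Ic\times\R^\ell$ is relatively compact, hence tight by Prokhorov's theorem, so there is a compact $K$ with $\sup_n\nu_n(K^c)<1/2$; but $\nu_n(K^c)$ counts atoms of $\nu_n$ outside $K$, so it is a nonnegative integer and therefore vanishes. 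Thus all $N$ atoms of every $\nu_n$ lie in $K\subseteq F\times\overline{B}(0,R)$, and in particular $V_n\subseteq F$. Since $F$ is finite, only finitely many $N$-element subsets of $F$ exist, so along a subsequence $V_n\equiv V$ is constant; and for each $i\in V$ the sequence $(x^n_i)_n$ lives in the compact set $\overline{B}(0,R)$, hence converges along a further subsequence to some $x_i\in\R^\ell$. Then $\nu_n=\sum_{i\in V}\delta_{(i,x^n_i)}\to\sum_{i\in V}\delta_{(i,x_i)}$ weakly, since $\sum_{i\in V}f(i,x^n_i)\to\sum_{i\in V}f(i,x_i)$ for every bounded continuous $f$, and by uniqueness of weak limits $\nu=\sum_{i\in V}\delta_{(i,x_i)}$. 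As $V$ is a finite subset of $\Ic$ on which $i\nprec j$ (this property holds for $V_n\equiv V$, hence for $V$), this exhibits $\nu$ as an element of $E_\ell$, which proves that $E_\ell$ is closed.

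The only delicate point is the tightness step: it is exactly what prevents atoms of $\nu_n$ from escaping to infinity in the $\R^\ell$ direction or drifting off along labels of unbounded length, and it is essential that it combines with the \emph{integrality} of the masses $\nu_n(K^c)$ to pin every atom inside one fixed compact set, after which only a routine diagonal extraction (plus the uniform discreteness of $\Ic$) is needed. Everything else—convergence of masses, continuity of weak limits of finitely supported measures, stability of the conditions defining $E_\ell$ under passing to the limit—is elementary.
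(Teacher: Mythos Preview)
Your proof is correct, but it takes a genuinely different route from the paper's. The paper exploits the discreteness of $\Ic$ directly: since each singleton $\{i\}$ is clopen in $\Ic$, the indicator $\mathds{1}_{\{i\}\times\R^\ell}$ is bounded and continuous, so testing against it yields a $\{0,1\}$-valued convergent sequence, which identifies the limit label set $V$ without any compactness argument. The positions $x^i$ are then recovered by testing against functions of the form $\mathds{1}_{\{i\}}\otimes f$ with $f\in C_b(\R^\ell)$, which forces $\delta_{x^i_n}$ to converge weakly to a probability measure on $\R^\ell$, hence to a Dirac. No subsequence extraction is needed.

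Your approach instead goes through Prokhorov tightness combined with the integrality of the masses $\nu_n(K^c)$ to trap all atoms in a fixed compact set, and then uses a diagonal extraction. This is a more portable argument---it would adapt to situations where the label component is not uniformly discrete---and the integrality trick is a nice touch. The paper's proof is shorter and avoids subsequences, but it relies more heavily on the specific topology of $\Ic$ (and is a bit terse on why convergence of $f(x^i_n)$ for all bounded continuous $f$ forces convergence of $x^i_n$; this is where tightness is hiding in their argument too). Both approaches are valid and each has its merits.
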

\begin{proof}
Let $(\mu_n)_{n \in \mathbb{N}}$ be a sequence of $E_\ell $ such that $\mu_n = \sum_{i \in V_n} \delta_{(i,x^i_n)} \overset{w} \rightarrow \mu \in \mathcal{M}_F(\Ic \times \mathbb{R}^\ell)$. We prove that $\mu$ is an element of $E_\ell $, i.e. it can be written as $\mu = \sum_{i \in V} \delta_{(i,x^i)}$ for some set $V\subseteq \Ic$, $|V| < \infty$ and some points $(x_i)_{i \in V}$.

 Consider the continuous functions $\1_{\{i\}\times \R^\ell}$ for $i \in \Ic$. We then have
\beqs
\langle \mu_n \ , \ \1_{\{i\}\times \mathbb{R}^\ell} \rangle =
    \begin{cases}
      1 \quad & \text{if } i \in V_n\\
      0 \quad & \text{if } i \notin V_n
    \end{cases}
\enqs
For each $ i \in \Ic$, we have that the sequence $(\langle \mu_n \ , \ \1_{\{i\}\times \mathbb{R}^\ell} \rangle)_n$ is a convergent sequence in $\{0,1\}$, which is in particular stationary. Let $V$ be defined as follow:
\beqs
V := \left\{ i \in \Ic \ : \ \langle \mu_n \ , \ \1_{\{i\}\times \mathbb{R}^\ell} \rangle \underset{n \rightarrow \infty}\longrightarrow 1\right\}\ .
\enqs

Let $i \in V$. Since the functions previously described converge, they are constant from a certain rank and there exists $n_i \in \mathbb{N}$ such that for $n \geq n_i$ we have $i \in V_n$. For $f \in C(\mathbb{R}^\ell)$ and consider the function $\1_{\{i\}} \otimes f : \Ic \times \mathbb{R}^\ell \rightarrow \mathbb{R}$. We have:
\beqs
f(x^i_n) = \langle \mu_n , \1_{\{i\}}\otimes f \rangle \longrightarrow \langle \mu , \1_{\{i\}}\otimes f \rangle \in \mathbb{R} .
\enqs
This means that for each $i \in V$ and $f \in C(\mathbb{R}^\ell)$ the sequence $(f(x^i_n))_n$ converges, therefore $(x^i_n)_n$ converges to a point $x^i \in \mathbb{R}^\ell$.

We then notice that any continuous and bounded function $f$  on $\Ic\times\R^\ell$ is of the form $f=\sum_{i\in\Ic}f_i$ with $f_i$ is continuous and bounded on $\R^\ell$. In particular, we get
\beqs
\int_{\Ic\times\R^\ell}fd\mu_n & = & \sum_{i\in V}f_i(x^i_n)
\enqs
for $n$ large enough, and
\beqs
\int_{\Ic\times\R^\ell}fd\mu_n & \xrightarrow[n\rightarrow+\infty]{} & \int_{\Ic\times\R^\ell}fd\left(\sum_{i \in V} \delta_{(i,x^i)}\right)
\enqs
so we have $\mu = \sum_{i \in V} \delta_{(i,x^i)}$.

To finally prove that $\mu \in E_\ell$ we need to show that there do not exist $ \ i,j \in V$ such that $i \prec j$. Fix $i,j\in V$. From the previous steps, there exists some n such that $i,j\in V_n$. Since $\mu_n\in E_\ell$, we get $i \nprec j$ and  $j \nprec i$. Therefore, we have $\mu \in E_\ell$.
\end{proof}

\subsection{Branching martingale controlled problem}\label{sec-mart-cont-pb}

We first set our controlled martingale problem. We define the set $\tilde{E}_{m+1}$ as the set of finite measure $\mu$ on $\Ic\times\N\times\R^{m+1}$ of the form
\beqs
\mu & = & \sum_{i\in\Ic,n\in\N}\frac{1}{2^{2(|i|+n)}}\delta_{(i,n,b^i,q^{i,n})}
\enqs
with $b^i\in\R^m$ and $q^{i,n}\in\R$ for $i\in\Ic$ and $n\in\N$. From the same argument as for $E_\ell$, we have that $\tilde E _{m+1}$ is Polish.

We then set $\mathbf X=\D([0,T],E_{d+1})\times \D([0,T],\tilde{E}_{m+1})$  the space pairs of \textit{c\`adl\`ag} functions from $[0,T]$ to $E_{d+1}$ and $\tilde{E}_{m+1}$.
We denote by $\mathbf x$ the canonical process and by $\G=(\Gc_t)_{t\in[0,T]}$ the canonical filtration on $\mathbf X$.

For $\bar x =(\sum_{i\in \Vc_s}\delta_{(i,\hat x_s^i)},\sum_{i\in \Ic,n\in\N}\frac{1}{2^{2(|i|+n)}}\delta_{(i,n,b_s^i, q_s^{i,n})})_{s\in[0,T]}\in \mathbf{X}$, we write
\beq\label{def-projX}
{}^1\bar x ~ = ~ \left(\sum_{i\in \Vc_s}\delta_{(i,\hat x_s^i)}\right)_{s\in[0,T]} & \mbox{ and } & {}^2\bar x ~= ~ \left(\sum_{i\in \Ic,n\in\N}\frac{1}{2^{2(|i|+n)}}\delta_{(i,n,b_s^i, q_s^{i,n})}\right)_{s\in[0,T]}
\enq
We also define ${}^1\mathbf x$ and ${}^2\mathbf x$ the first and second component of the canonical process.

Let $C^{1,2}([0,T]\times \Ic\times \R^{d+1})$ (resp. $C^{1,2}([0,T]\times \Ic\times\N\times \R^{m+1})$) be the set of functions $f: ~[0,T]\times \Ic\times \R^{d+1}\rightarrow\R$ (resp. $g: ~[0,T]\times \Ic\times\N\times \R^{m+1}\rightarrow\R$) such that $f_i(\cdot)\in C^{1,2}([0,T]\times \R^{d+1})$ (resp. $g_{i,n}(\cdot)\in C^{1,2}([0,T]\times \R^{m+1})$) for all $i\in \Ic$ (resp. $(i,n)\in \Ic\times\N$) and $C_c^{1,2}([0,T]\times \Ic\times \R^{d+1})$ (resp. $C_c^{1,2}([0,T]\times \Ic\times\N\times \R^{m+1})$) the set of $f\in C^{1,2}([0,T]\times \Ic\times \R^{d+1})$ (resp. $g\in C^{1,2}([0,T]\times \Ic\times\N\times \R^{m+1})$) such that there exists a compact $K\subset \R^{d+1}$ (resp. $K\subset \R^{m+1}$) satisfying $f_i(t,x)=0$ (resp. $g_{i,n}(t,x)=0$) for $(t,i,x)\in[0,T]\times\Ic\times \R^{d+1}$ such that $x\notin K$ (resp. $(t,i,n,x)\in[0,T]\times\Ic\times\N\times \R^{m+1}$ such that $x\notin K$).

We define the operator $\Delta$ by
\beqs
\Delta g(s,\mu) & = & \sum_{i\in \Ic,n\in\N}\frac{1}{2^{2(|i|+n)}}\Delta_b g_{i,n}(b^i, q^{i,n})
\enqs
for $g\in C_c^{1,2}([0,T]\times \Ic\times\N\times \R^{m+1})$ and $\mu\in \sum_{i\in \Ic,n\in\N}\frac{1}{2^{2(|i|+n)}}\delta_{(i,n,b^i, q^{i,n})}\in \tilde E _{m+1}$, where $\Delta_b$ stands for the Laplacian operator with respect to the third variable of the function  $g$.
For a given control $\alpha\in\Ac$, let $\bar L ^\alpha$ be the following second order local operator
  \beqs
 \bar L ^{\alpha} F_{f,g} (s,\bar x) & = &
 \partial_1 F (f(s,{}^1\bar x_s),g(s,{}^2\bar x_s))\sum_{i\in\Vc_s}\big(\partial_t+\hat L^{\alpha^i(s,{}^2\bar x_s)}\big)f_i(s,\hat x^i_s)\\
  & & +\partial_1 F (f(s,{}^1\bar x_s),g(s,{}^2\bar x_s))\Big(\partial_t+\frac{1}{2}\Delta \Big)g(s,{}^2\bar x_s)\\
   & & + \frac{1}{2} \partial_{11} F (f(s,{}^1\bar x_s),g(s,{}^2\bar x_s))\sum_{i\in\Vc_s}\big|\hat \sigma(\hat x_s^i,\alpha(s,{}^2\bar x)\big)^\top Df_i(s,\hat x^i_s)\big|^2\\
   & & + \partial_{12} F (f(s,{}^1\bar x_s),g(s,{}^2\bar x_s))\\
    & & \sum_{i\in\Vc_s,n\in\N}\frac{1}{2^{2(|i|+n)}}\partial_b g_{i,n}(s,b^i_s,q^{i,n}_s)^\top\hat \sigma(\hat x_s^i,\alpha(s,{}^2\bar x)\big)^\top Df_i(s,\hat x^i_s)\\
   & & + \frac{1}{2} \partial_{22} F (f(s,{}^1\bar x_s),g(s,{}^2\bar x_s))\sum_{i\in\Ic,n\in\N}\left|\frac{1}{2^{2(|i|+n)}}\partial_b g_{i,n}(s,b^i_s,q^{i,n}_s)\right|^2\\
    &  & + \gamma\sum_{i\in\Ic,n\in\N} p_n\Big\{ F\Big(f(s,{}^1\bar x_s)+\mathds{1}_{\Vc_s}(i)\sum_{\ell=0}^{n-1} (f_{i\ell}-f_i)(s,\hat x^i_s),\\
     & & \qquad\qquad g(s,{}^2\bar x_s)+\frac{1}{2^{2(|i|+n)}}\big(g(i,n,s,b^i_s,q^{i,n}_s+1)-g(i,n,s,b^i_s,q^{i,n}_s)\Big)\\
      & & \qquad\qquad-F(f(s,{}^1\bar x_s),g(s,{}^2\bar x_s))\Big\}
 \enqs
for $s\in[0,T]$,  $\bar x =\left(\sum_{i\in \Vc_s}\delta_{(i,\hat x_u^i)},\sum_{i\in \Ic,n\in\N}\frac{1}{2^{2(|i|+n)}}\delta_{(i,n,b_u^i, q_u^{i,n})}\right)_{u\in[0,T]}\in \mathbf{X}$, $f\in C_c^{1,2}([0,T]\times\Ic\times \R^{d+1})$, $g\in C^{1,2}_c([0,T]\times\Ic\times\N\times \R^{m+1})$ and $F\in C_c^2(\R^2)$ with $F_{f,g}=F\circ (f,g)$. We then define the process $\bar M^{t,\alpha,F_f}$ by
\beqs
\bar M^{t,\alpha,F_{f,g}}_s & = & F_f(s,\mathbf x_s)-\int_t^s\bar L^{\alpha} F_{f,g}(u,\mathbf x)d u\;,\quad s\in[t,T]\;.
\enqs

\begin{Definition}[Martingale problem]\label{defMP}
Consider the initial condition $(t,\bar x)\in [0,T]\times\mathbf X$, and a control $\alpha\in \Ac$. A probability measure $\bar \P^{t,{}^1\bar x,\alpha}$ is a \emph{solution to the controlled martingale problem} if the process $\bar M^{t,\alpha,F_{f,g}}$ is a $\G$-martingale under $\bar \P^{t,{}^1\bar x,\alpha}$ for any $f\in C^{1,2}_c([0,T]\times \Ic\times \R^{d+1})$, $g\in C^{1,2}_c([0,T]\times \Ic\times\N\times \R^{m+1})$, and any $F\in C_c^2(\R^2)$, $\bar \P^{t,{}^1\bar x,\alpha}({}^1\mathbf x_s={}^1 \bar x_s\mbox{ for }s\in[0,t])=1$ and  $\bar \P^{t,{}^1\bar x,\alpha}({}^2\mathbf x\in {}^2G )=\mathbb{W}({}^2G)$  for any ${}^2G\in {}^2\Gc_t$ where $\mathbb{W}$ stands for the law of the process $\xi$ and ${}^2\G=({}^2\Gc_t)_{t\in[0,T]}$ stands for the canonical filtration on $\D([0,T],\tilde E_{m+1}$).
\end{Definition}

\begin{Definition}[Shifted martingale problem]\label{Shifted martingale problem}
Consider the initial condition $(t,\bar x)\in [0,T]\times\mathbf X$, and a control $\alpha\in \Ac$. A probability measure $\bar \P^{t,\bar x,\alpha}$ is a \emph{solution to the shifted controlled martingale problem} if the process $\bar M^{t,\alpha,F_{f,g}}$ is a $\G$-martingale under $\bar \P^{t,\bar x,\alpha}$ for any $f\in C^{1,2}_c([0,T]\times \Ic\times \R^{d+1})$, $g\in C^{1,2}_c([0,T]\times \Ic\times \R^{m+1})$, and any $F\in C_c^2(\R)$, and $\bar \P^{t,\bar x,\alpha}(\mathbf x_s=\bar x_s \mbox{ for }s\in[0,t])=1$.
\end{Definition}

We are now able to state the main result of this section. For that we need the following notations. We first extend the definition of the concatenation operator $\oplus$ on $\D([0,T],\tilde E_{m+1})$ as follows:
\beqs
(y\oplus_t \tilde y)_s & = & \sum_{i\in\Ic,n\in\N}\frac{1}{2^{2(|i|+n)}}\delta_{(i,n,(b ^i\oplus_t \tilde b^i)_s,(q ^{i,n}\oplus_t \tilde q^{i,n})_s)}
\enqs
with
\beqs
(b ^i\oplus_t \tilde b^i)_s & = & b ^i_{s\wedge t}\mathds{1}_{s<t}+ (\tilde b^i_s-\tilde b^i_t+b^i_t)\mathds{1}_{s\geq t}\\
(q^{i,n}\oplus_t \tilde q^{i,n})_s & = & q^{i,n}_{s\wedge t}\mathds{1}_{s<t}+ (\tilde q^{i,n}_s-\tilde q^{i,n}_t+q^{i,n}_t)\mathds{1}_{s\geq t}
\enqs
for $s\in[0,T]$, $y=(\sum_{i\in\Ic,n\in\N}\frac{1}{2^{2(|i|+n)}}\delta_{(i,n,b ^i_u,q ^{i,n}_u)})_{u\in[0,T]}$  and $\tilde y=(\sum_{i\in\Ic,n\in\N}\frac{1}{2^{2(|i|+n)}}\delta_{(i,n,\tilde b ^i_u,\tilde q ^{i,n}_u)})_{u\in[0,T]}$.
In particular, we have
\beqs
\xi(\omega\oplus_t\tilde \omega) & = & \xi(\omega)\oplus_t\xi(\tilde \omega)
\enqs
for $\omega,\tilde \omega\in\Omega$ and $t\in[0,T]$.
For $\eta:[0,T]\times {}^2\mathbf{X}\rightarrow \R$ and $\bar x\in \mathbf{X}$ we finally define the function $\eta^{t,{}^2\bar x}$ by
\beqs
\eta^{t,{}^2\bar x}( s,{}^2\bar x')  & = & \eta(s,{}^2\bar x\oplus_t{}^2\bar x')
\enqs
for $\bar x'\in\mathbf{X}$ and $s\in[0,T]$.
\begin{Theorem}\label{IdCondProbCanSpace}
Suppose that Assumption \ref{Assumption_H_0} holds and that there exists a unique solution to the martingale problem and the shifted martingale problem for each initial condition and control. Let $(t,\bar x,\alpha)\in [0,T]\times\mathbf X\times \Ac$ and $\tau$ a $\G$-stopping time valued in $[t,T]$. Then, we have
\beqs
\bar \P_{\bar x'}^{t,{}^1\bar x,\alpha} & = & \P^{\tau{(\bar x')},\bar x',\alpha^{\tau(\bar x'),{}^2\bar x'}}\;,\quad  \bar \P^{t,\bar x,\alpha}(d \bar x')-a.s.
\enqs
where $(\bar \P_{\bar x'}^{t,\bar x,\alpha},~\bar x'\in \mathbf{X})$ is a \emph{regular conditional probability distribution} of $\bar \P_{}^{t,\bar x,\alpha}$ given $\Gc_\tau$.
\end{Theorem}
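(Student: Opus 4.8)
The plan is to transport the classical Stroock--Varadhan argument for identifying regular conditional probability distributions (r.c.p.d.) of solutions to martingale problems onto the present branching set-up, following the scheme of \cite[Theorem 2]{CTT13}. First I would fix $(t,\bar x,\alpha)$ and a $\G$-stopping time $\tau$ valued in $[t,T]$, and let $(\bar\P_{\bar x'}^{t,{}^1\bar x,\alpha})_{\bar x'\in\mathbf X}$ be an r.c.p.d. of $\bar\P^{t,{}^1\bar x,\alpha}$ given $\Gc_\tau$; such a family exists because $\mathbf X$ is Polish and $\Gc_\tau$ is countably generated. The goal is to show that for $\bar\P^{t,{}^1\bar x,\alpha}(d\bar x')$-a.e.\ $\bar x'$, the measure $\bar\P_{\bar x'}^{t,{}^1\bar x,\alpha}$ solves the \emph{shifted} martingale problem of Definition \ref{Shifted martingale problem} started at $(\tau(\bar x'),\bar x')$ with the shifted control $\alpha^{\tau(\bar x'),{}^2\bar x'}$; uniqueness of the shifted martingale problem then forces the claimed equality.

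The core step is the following. For $f\in C_c^{1,2}$, $g\in C_c^{1,2}$, $F\in C_c^2(\R^2)$, the process $\bar M^{t,\alpha,F_{f,g}}$ is a $\G$-martingale under $\bar\P^{t,{}^1\bar x,\alpha}$. By optional sampling, for any $0\le s_1\le\cdots\le s_k\le s\le s'$ in $[t,T]$ and any bounded $\Gc_s$-measurable $h$,
\beqs
\E^{\bar\P^{t,{}^1\bar x,\alpha}}\Big[\big(\bar M^{t,\alpha,F_{f,g}}_{s'\vee\tau}-\bar M^{t,\alpha,F_{f,g}}_{s\vee\tau}\big)\,h\,\1_{\{\tau\le s\}}\Big] & = & 0\;,
\enqs
and, using that $\bar L^\alpha$ depends on $\alpha$ only through its evaluation along the path, the increment $\bar M^{t,\alpha,F_{f,g}}_{s'\vee\tau}-\bar M^{t,\alpha,F_{f,g}}_{s\vee\tau}$ rewritten on the post-$\tau$ trajectory coincides with the analogous increment of $\bar M^{\tau(\bar x'),\alpha^{\tau(\bar x'),{}^2\bar x'},F_{f,g}}$ built from the shifted data (here one invokes the identity $\xi(\omega\oplus_t\tilde\omega)=\xi(\omega)\oplus_t\xi(\tilde\omega)$ and the definition \eqref{def v.a. cond} of the shift, together with the structure of $\bar L^\alpha$, which is a sum of a continuous generator part and a jump part of the Poisson type with state-independent intensity $\gamma p_n$). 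Disintegrating this identity over $\Gc_\tau$ gives that, off a $\bar\P^{t,{}^1\bar x,\alpha}$-null set $\mathcal N_{f,g,F,s,s',h}$, the r.c.p.d.\ $\bar\P_{\bar x'}^{t,{}^1\bar x,\alpha}$ makes $\bar M^{\tau(\bar x'),\alpha^{\tau(\bar x'),{}^2\bar x'},F_{f,g}}$ a martingale on $[\tau(\bar x'),T]$ over that particular time window and test pair. One then removes the dependence on the auxiliary data by a countable-generation argument: the martingale property for all bounded $\Gc_s$-measurable $h$, all rational $s\le s'$, and a countable dense (for the relevant topology) family of test functions $(f,g,F)$ suffices, and $C_c^{1,2}$ and $C_c^2$ admit such countable determining families; one also uses that $\bar\P_{\bar x'}^{t,{}^1\bar x,\alpha}$ is, by definition of an r.c.p.d.\ relative to $\Gc_\tau$, concentrated on paths agreeing with $\bar x'$ on $[0,\tau(\bar x')]$, so the initial condition of the shifted problem holds for a.e.\ $\bar x'$. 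Intersecting the countably many null sets yields a single null set outside of which $\bar\P_{\bar x'}^{t,{}^1\bar x,\alpha}$ solves the shifted martingale problem started at $(\tau(\bar x'),\bar x')$ with control $\alpha^{\tau(\bar x'),{}^2\bar x'}$.

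Finally, by the assumed uniqueness of the shifted martingale problem for each initial condition and control, this solution must be $\bar\P^{\tau(\bar x'),\bar x',\alpha^{\tau(\bar x'),{}^2\bar x'}}$, which is precisely the assertion. The main obstacle I expect is the bookkeeping in the shift step: one must check carefully that $\bar L^\alpha$ commutes with the concatenation operator in the sense that $(\bar L^\alpha F_{f,g})(u,\bar x\oplus_\tau\bar x') = (\bar L^{\alpha^{\tau,{}^2\bar x'}}F_{f,g})(u,\bar x')$ for $u\ge\tau$ on the relevant set of paths --- this uses the non-explosion Proposition \ref{well-pose-branch-pop} to control the (a.s.\ finite) sums over $\Vc_s$, the compact support of $f,g,F$ to guarantee integrability, and the fact that the Poisson intensity $\gamma p_n\,du$ is deterministic so that the jump part of $\bar L^\alpha$ genuinely only sees the current state. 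A secondary technical point is producing countable determining classes for the martingale property compatible with the weak topology on $E_{d+1}$ and $\tilde E_{m+1}$; this is handled exactly as in the cited references, exploiting that these spaces are Polish.
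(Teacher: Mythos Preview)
Your proposal is correct and follows essentially the same Stroock--Varadhan scheme as the paper: show that the r.c.p.d.\ inherits the martingale property (the paper does this by a direct appeal to Theorem 6.1.3 of \cite{SV97}, which packages your optional-sampling plus countable-density argument), then identify the generator under the conditional measure with the one for the shifted control (the paper phrases this as $\hat\lambda^{\alpha^{\tau(\bar x'),{}^2\bar x'}}(s,\cdot)=\hat\lambda^\alpha(s,\cdot)$ and likewise for $\hat\sigma$, $\bar\P_{\bar x'}^{t,{}^1\bar x,\alpha}$-a.s., which is your ``commutation'' observation), and conclude by uniqueness of the shifted problem.
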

\begin{proof} We first define the function $\hat \lambda^\alpha$ and $\hat \sigma^\alpha$ by
\beqs
\hat \lambda^\alpha(s,\bar x)  =  \hat \lambda({}^1\bar x_s,\alpha(s,{}^2\bar x)) & \mbox{ and } & \hat \sigma^\alpha(s,\bar x)  =  \hat \sigma({}^1\bar x_s,\alpha(s,{}^2\bar x))\;,\quad (s,\bar x)\in[0,T]\times \mathbf{X}\;,
\enqs
for $\alpha\in\Ac$.
Since $C^2_c(\R^2)\times C_c^{1,2}([0,T]\times \Ic\times \R^{d+1})\times C_c^{1,2}([0,T]\times \Ic\times\N\times \R^{m+1}) $ admits a dense countable subset, we can apply Theorem 6.1.3 of \cite{SV97} to our framework and we get a negligible set $N\in \Gc_\tau$ such that  for any $(F,f,g)\in C^2_c(\R^2)\times C_c^{1,2}([0,T]\times \Ic\times \R^{d+1})\times C_c^{1,2}([0,T]\times \Ic\times\N\times \R^{m+1}) $, the process $(\bar M^{t,\alpha,F_{f,g}}_s)_{s\in[\tau(\bar x'),T]}$ is a $\G$-martingale under $\bar \P_{\bar x'}^{t,{}^1\bar x,\alpha}$ for any $\bar x'\in \mathbf{X}\setminus N$. We notice that
\beqs
\bar \P_{\bar x'}^{t,{}^1\bar x,\alpha}\left(\left\{\bar x''\in\mathbf{X}~:~\hat \lambda^{\alpha^{\tau(\bar x'),{}^2\bar x'}}(s,\bar x'')=\hat \lambda^\alpha(s,\bar x'')\;\mbox{ for all }s\in  [\tau(\bar x'),T] \right\}\right) & = & 1\enqs
and
\beqs
\bar \P_{\bar x'}^{t,{}^1\bar x,\alpha}\left(\left\{\bar x''\in\mathbf{X}~:~\hat \sigma^{\alpha^{\tau(\bar x'),{}^2\bar x'}}(s,\bar x'')=\hat \sigma^\alpha(s,\bar x'')\;\mbox{ for all }s\in  [\tau(\bar x'),T] \right\}\right) & = & 1
\enqs
for any $\bar x'\in \mathbf{X}\setminus N$. Therefore, for any $(F,f,g)\in C^2_c(\R^2)\times C_c^{1,2}([0,T]\times \Ic\times \R^{d+1})\times C_c^{1,2}([0,T]\times \Ic\times\N\times \R^{m+1}) $, the process $(\bar M^{\tau{(\bar x')},\alpha^{\tau(\bar x'),{}^2\bar x'},F_{f,g}}_s)_{s\in[\tau(\bar x'),T]}$ is a $\G$-martingale under $\bar \P_{\bar x'}^{t,{}^1\bar x,\alpha}$ for any $\bar x'\in \mathbf{X}\setminus N$. By uniqueness to the shifted controlled martingale problem with initial condition $(\tau{(\bar x')},\bar x')$ and control $\alpha^{\tau(\bar x'),{}^2\bar x'}$,  we get
\beqs
\bar \P_{\bar x'}^{t,{}^1\bar x,\alpha} & = & \P^{\tau{(\bar x')},\bar x',\alpha^{\tau(\bar x'),{}^2\bar x'}}
\enqs
for any $\bar x'\in \mathbf{X}\setminus N$.
\end{proof}
\begin{Theorem}\label{uniqueness}
Under Assumption \ref{Assumption_H_0}, the martingale problem and the shifted martingale problem admit unique solutions for any initial condition $(t,\bar x)\in [0,T]\times \mathbf X$ and any control $\alpha\in \Ac$.
\end{Theorem}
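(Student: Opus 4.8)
I would obtain existence by pushing forward the branching diffusion already constructed in Section \ref{subsecdef} onto the canonical space $\mathbf X$, and uniqueness by a localisation-and-conditioning argument exploiting the recursive structure of the branching mechanism together with classical uniqueness for martingale problems attached to Lipschitz stochastic differential equations.

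\textbf{Existence.} Fix $(t,\bar x,\alpha)$. On the space $(\Omega,\Fc,\F,\P)$ of Section \ref{subsecdef} I would consider, for the shifted problem, the path that equals $\bar x$ on $[0,t]$ and equals $(\hat Z^{t,\hat\mu,\alpha}_s,\xi_s)$ for $s\in[t,T]$, with $\hat\mu={}^1\bar x_t$, and define $\bar\P^{t,\bar x,\alpha}$ to be its law on $\mathbf X$ (for the non-shifted problem one only freezes ${}^1\mathbf x$ on $[0,t]$ and lets $\xi$ run, so that ${}^2\mathbf x$ automatically has law $\mathbb W$). By Proposition \ref{well-pose-branch-pop} this object is well defined and non-exploding, and the initial/marginal requirements of Definitions \ref{defMP} and \ref{Shifted martingale problem} hold by construction. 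To check that $\bar M^{t,\alpha,F_{f,g}}$ is a martingale I would apply It\^o's formula to $F\circ(f,g)$, using the dynamics \eqref{dynhatZ} of ${}^1\mathbf x$ and the Brownian-plus-Poisson-counting dynamics of $\xi={}^2\mathbf x$; the $\partial_{12}F$ term comes from the common Brownian drivers $B^i$ of $\hat X^i$ and of $\xi$, and the jump term from the reproduction events. Compact support of $f,g,F$, Assumption \ref{Assumption_H_0} and the non-explosion bound of Proposition \ref{well-pose-branch-pop}(i) upgrade the resulting local martingale to a true martingale.

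\textbf{Uniqueness.} Let $\bar\P$ be any solution of the (shifted) martingale problem with data $(t,\bar x,\alpha)$. Testing with triples $(F,f,g)$ for which $f$ is constant, so $F_{f,g}$ depends only on ${}^2\mathbf x$, I would first deduce that under $\bar\P$ the component ${}^2\mathbf x$ solves the control-free martingale problem for a system of independent Brownian motions and Poisson random measures, hence has a uniquely determined law (namely $\mathbb W$, suitably shifted). In particular the successive branching times $t=S_0<S_1<S_2<\cdots$ and the offspring numbers become measurable functionals of ${}^2\mathbf x$ with known joint law, and by Proposition \ref{well-pose-branch-pop}(i) only finitely many lie in $[0,T]$. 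I would then argue by induction on $k$ that the restriction of $\bar\P$ to $\Gc_{S_k}$ is unique. On a stochastic interval $[S_k,S_{k+1})$ only the finitely many alive particles move, according to $\sum_{i\in\Vc_s}(\partial_t+\hat L^{\alpha^i})$ with cross-variation tied to the prescribed Brownian paths carried by ${}^2\mathbf x$; since $\hat\lambda,\hat\sigma$ are Lipschitz and bounded at the origin by Assumption \ref{Assumption_H_0}(ii)--(iii), this finite-dimensional martingale problem has a unique solution (equivalently the SDE has pathwise uniqueness, hence uniqueness in law by Yamada--Watanabe). At $S_{k+1}$ the offspring positions equal the parent position by \eqref{def-dynX1} and \eqref{def-dynY1}. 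Splicing these conditional laws via a regular-conditional-probability argument at the stopping times $S_k$, in the spirit of Theorem \ref{IdCondProbCanSpace} and Chapter 6 of \cite{SV97}, determines $\bar\P$ restricted to $\Gc_{S_{k+1}}$ from $\bar\P$ restricted to $\Gc_{S_k}$; letting $k\to\infty$ and using $S_k\to T$ gives uniqueness on $\Gc_T$. The non-shifted case is identical, the law of ${}^2\mathbf x$ on ${}^2\Gc_t$ being fixed to $\mathbb W$ by hypothesis.

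\textbf{Main obstacle.} I expect the delicate point to be not the Lipschitz estimates but the bookkeeping in the uniqueness step: setting up regular conditional probabilities at the random branching times $S_k$, verifying that the conditioned measure is again a solution of a \emph{shifted} martingale problem for a finite labelled particle system coupled to the frozen noise ${}^2\mathbf x$, and controlling the measurability of the path-to-conditional-law map. This is the branching analogue of the Stroock--Varadhan gluing procedure; the infinite dimensionality is harmless because the bound $\E[\sup_s|\Vc_s^{t,\mu}|]\leq|V|e^{\gamma M(T-t)}$ allows truncation at the $N$-th branching event followed by passage to the limit $N\to\infty$.
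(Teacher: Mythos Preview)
Your proposal is correct, but it follows a genuinely different route from the paper.

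The paper does not localise at branching times. Instead it embeds the canonical process into the path-valued process $\mathfrak{x}_s=(s,\mathbf x_{\cdot\wedge s})$, introduces a domain $\Dc$ of cylinder functions $h=H\!\circ\!(F^1_{f^1,g^1},\ldots,F^p_{f^p,g^p})$ and an associated operator $\mathbf L^{t,\alpha}$, and then proves a single lemma (Lemma \ref{lem1marg}): for any two solutions $\bar\P_1,\bar\P_2$ of the (shifted) martingale problem and any $h\in\Dc$, a duplication trick on the product space $(\mathbf X\times\mathbf X,\bar\P_1\otimes\bar\P_2)$ yields $\E^{\bar\P_1}[h(\mathfrak{x}_s)]=\E^{\bar\P_2}[h(\mathfrak{x}_s)]$, i.e.\ coincidence of one-dimensional marginals of $\mathfrak{x}$. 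Uniqueness of the full law then drops out of Theorem 4.2 in Ethier--Kurtz \cite{EthierKurtz}. Existence is not argued separately inside Theorem \ref{uniqueness}; it is obtained exactly as you suggest, by pushing forward $(\hat Z^{t,\hat\mu,\alpha},\xi)$, and is made explicit only in the proof of Theorem \ref{condTHM}.

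Your approach---fix the law of ${}^2\mathbf x$ first, then induct along the branching times $S_k$ using pathwise uniqueness for the finite Lipschitz system between jumps, gluing by regular conditional probabilities \`a la Stroock--Varadhan---is the classical ``piecing together'' argument for branching diffusions. It is more constructive, makes the role of the Lipschitz hypotheses transparent, and would give uniqueness \emph{conditional on} ${}^2\mathbf x$, a slightly stronger statement. The paper's argument, by contrast, is essentially two paragraphs once the operator $\mathbf L^{t,\alpha}$ is written down, and never touches the branching times explicitly; the price is that one must set up the path-valued process and check that $\Dc$ is rich enough to separate measures. One caution: you cite Theorem \ref{IdCondProbCanSpace} ``in spirit'' for the gluing, but in the paper that theorem \emph{uses} Theorem \ref{uniqueness}, so if you write this out you must invoke \cite[Theorem 6.1.3]{SV97} directly rather than through Theorem \ref{IdCondProbCanSpace}, to avoid circularity.
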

To prove Theorem \ref{uniqueness}, we need to consider an extended process $\mathfrak{x}$ defined by
\beqs
{ \mathfrak{x}}_s & = & (s,(\mathbf x_{u\wedge s}))\;,\quad s\in [t,T]
\enqs
The process ${\mathfrak{x}}$ is valued in $\mathfrak{X}=\R\times \mathbf X$ which is separable. We introduce the domain $\Dc$ as the set of function $h:~  \mathfrak{X}\rightarrow \R$ of the form
\beqs
h(s,\bar{{x}}) = H\left( F^1_{f^1,g^1}(s,\bar x_{s\wedge t_1}),\ldots,F^p_{f^p,g^p}(s,\bar x_{s\wedge t_p})\right)\;,\quad (s,\bar x)\in \mathfrak{X}\;,
\enqs
for some $p\geq 1$, $0\leq t_1 <\cdots<t_p\leq T$, $H\in C^{2}(\R^p)$, $F^1,\ldots,F^p\in C^{1,2}_c(\R^{2})$, $f^1,\ldots,f^p\in C^{1,2}_c([0,T]\times \Ic\times \R^{d+1})$ and $g^1,\ldots,g^p\in C^{1,2}_c([0,T]\times \Ic\times\N \times \R^{m+1})$.
We then define on $\Dc$ the  operator $\mathbf{L}^{t,\alpha}$  by
\beqs
\mathbf{L}^{t,\alpha} h(s,\bar x)  & = &
  {\mathfrak{L}}^{t,\alpha}(s,\bar x)\cdot D H(
 h^1(s,\bar x_{s\wedge t_1}),\dots,h^p(s,\bar x_{s\wedge t_p})
)\\
 & & +  \frac{1}{2} \sum_{i\in\Ic}\textrm{Tr}\left(\mathfrak{S}^{\alpha}(\mathfrak{S}^{\alpha})^\top(s,\bar x)D^2H( F^1_{f^1,g^1}(s,\bar x_{s\wedge t_1}),\ldots,F^p_{f^p,g^p}(s,\bar x_{s\wedge t_p})
)\right)\\
 & & + \sum_{j=1}^p\mathds{1}_{t_{j-1}<s\leq t_j}\sum_{i\in \Ic} \sum_{k\geq 0}\gamma p_k\\
  & &  \qquad \bigg(  H\left( F^1_{f^1,g^1}(s,\bar x_{ t_1}),\ldots, F^{j-1}_{f^{j-1},g^{j-1}}(s,\bar x_{ t_{j-1}}),\mathfrak{G}_{i,k} F^j_{f^j,g^j}(s,\bar x_{s}),\ldots,\mathfrak{G}_{i,k} F^p_{f^p,g^p}(s,\bar x_{s})\right)\\
  & & \qquad \qquad \qquad \qquad \qquad \qquad \qquad \qquad -H\left( F^1_{f^1,g^1}(s,\bar x_{s\wedge t_1}),\ldots, F^p_{f^p,g^p}(s,\bar x_{s\wedge t_p})\right)\bigg)
\enqs
with $t_0=0$, where
\begin{equation*}
{\mathfrak{L}}^{t,\alpha}(s,\bar x) ~ = ~ \left(\begin{array}{c}{\mathds{1}_{s\leq t_1}\mathfrak{L}}^{t,\alpha,1}(s,\bar x)\\ \vdots\\
\mathds{1}_{s\leq t_1}{\mathfrak{L}}^{t,\alpha,p}(s,\bar x)
\end{array}\right)
\end{equation*}
with
\beqs
\mathfrak{L}^{t,\alpha,q}(s,\bar x) &= &\bar L ^{\alpha} F^q_{f^q,g^q} (s,\bar x)\\   & & -\gamma\sum_{i\in\Ic,n\in\N} p_n \Big\{F^q\Big(f^q(s,{}^1\bar x_s)+\mathds{1}_{\Vc_s}(i)\sum_{\ell=0}^{n-1} (f^q_{i\ell}-f^q_i)(s,\hat x^i_s),\\
     & & \qquad\qquad g^q(s,{}^2\bar x_s)+\frac{1}{2^{2(|i|+n)}}\big(g^q(i,n,s,b^i_s,q^{i,n}_s+1)-g^q(i,n,s,b^i_s,q^{i,n}_s)\Big)\\
      & & \qquad\qquad -F^q(f^q(s,{}^1\bar x_s),g^q(s,{}^2\bar x_s)) \Big\}
\enqs
and
\begin{equation*}
\mathfrak{S}^{t,\alpha}(s,\bar x) ~ = ~  \left(\begin{array}{c} \mathfrak{S}^{t,\alpha,1}(s,\bar x) \\ \vdots\\ \mathfrak{S}^{t,\alpha,p}(s,\bar x) \end{array}\right)
\end{equation*}
with
\beqs
\mathfrak{S}^{t,\alpha,q}(s,\bar x) & = & \sum_{i\in \Ic}\mathds{1}_{s\leq t_q}\Big(\partial_2 F^q (f^q(s,{}^1\bar x_s),g^q(s,{}^2\bar x_s))\sum_{n\in\N}\frac{1}{2^{2(|i|+n)}}\partial_b g_{i,n}^q(s,b^i_s,q^{i,n}_s)\\
 & &  +\mathds{1}_{i\in\Vc_s} \partial_1 F^q (f^q(s,{}^1\bar x_s),g^q(s,{}^2\bar x_s)) Df^q_i(s,\hat x^i_s)^\top \hat \sigma(\hat x_s^i,\alpha(s,{}^2\bar x)\big)\Big)
\enqs
for $q=1,\ldots,p$, $ (s,\bar x)\in [0,T]\times \mathbf{X}$ and
\beqs
\mathfrak{G}_{i,n} F^j_{f^j,g^j}(s,\bar x) &  = &
F\Big(f(s,{}^1\bar x_s)+\mathds{1}_{\Vc_s}(i)\sum_{\ell=0}^{n-1} (f_{i\ell}-f_i)(s,\hat x^i_s),\\
     & & \qquad\qquad g(s,{}^2\bar x_s)+\frac{1}{2^{2(|i|+n)}}\big(g(i,n,s,b^i_s,q^{i,n}_s+1)-g(i,n,s,b^i_s,q^{i,n}_s)\Big)
\enqs
for $ (s,\bar x)\in [0,T]\times \mathbf{X}$, $i\in\Ic$ and $n\geq 0$.
We then notice that for $\bar \P^{t,{}^1\bar x,\alpha}$ (resp. $\bar \P^{t,\bar x,\alpha}$) solution to the martingale problem (resp. shifted martingale problem) with initial condition $(t,{}^1\bar x)$ (resp.  $(t,\bar x)$) and control $\alpha$ the process
\beqs
h({\mathfrak{x}}_s)-\int_t^s \mathbf{L}^{t,\alpha} h({\mathfrak{x}}_u)du\;,\quad t\leq u\leq T\;,
\enqs
is a $\G$-martingale under $\bar \P^{t,{}^1\bar x,\alpha}$ (resp. $\bar \P^{t,\bar x,\alpha}$)

\begin{Lemma}\label{lem1marg}
Let $(t,\bar x,\alpha)\in [0,T]\times\mathbf{X}\times \Ac$ and $\bar \P_1^{t,{}^1\bar x,\alpha}$ and $\bar \P_{2}^{t,{}^1\bar x,\alpha}$ (resp. $\bar \P_1^{t,\bar x,\alpha}$ and $\bar \P_{2}^{t,\bar x,\alpha}$) two solutions to the martingale problem  (resp. shifted martingale problem)  with initial condition $(t,{}^1\bar x)$ (resp.  $(t,\bar x)$) and control $\alpha$. Then, $\bar \P_1^{t,\bar x,\alpha}$ and $\bar \P_{2}^{t,\bar x,\alpha}$ have the same one dimensional marginals:
\beq\label{eq1dim-marg}
\bar \P_{1}^{t,{}^1\bar x,\alpha}(\mathfrak{x}_s\in B) & = & \bar \P_{2}^{t,{}^1\bar x,\alpha}(\mathfrak{x}_s\in B)\\
(resp. ~~\bar \P_{1}^{t,\bar x,\alpha}(\mathfrak{x}_s\in B) & = & \bar \P_{2}^{t,\bar x,\alpha}(\mathfrak{x}_s\in B) )\label{eq1dim-marg-shift}
\enq
for $s\in[t,T]$ and $B\in \Bc(\mathfrak{X})$.
\end{Lemma}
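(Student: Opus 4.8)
\noindent The plan is to prove \eqref{eq1dim-marg} and \eqref{eq1dim-marg-shift} by the same argument, the only difference being whether the law of the noise component ${}^2\mathbf x$ on $[0,t]$ is the one induced by $\mathbb{W}$ or the Dirac mass at the pinned path ${}^2\bar x$; I describe the martingale–problem case. First I would reduce the claim to identifying $\E_{\bar \P_1^{t,{}^1\bar x,\alpha}}[h(\mathfrak{x}_s)]$ and $\E_{\bar \P_2^{t,{}^1\bar x,\alpha}}[h(\mathfrak{x}_s)]$ for $h\in\Dc$, since the cylindrical functionals of the stopped path belonging to $\Dc$ form a measure–determining class on $\mathbf X$. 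Specializing the martingale property to test functions $F_{f,g}$ with $f\equiv 1$ removes all ${}^1\mathbf x$–dependence and shows that, under any solution, ${}^2\mathbf x$ solves on $[t,T]$ the classical martingale problem for a countable family of independent $\R^m$–valued Brownian motions and $\N$–valued Poisson processes with intensity $\gamma\sum_k p_k\delta_k$, started from ${}^2\bar x_t$; this problem has a unique solution, so $\bar \P_1^{t,{}^1\bar x,\alpha}$ and $\bar \P_2^{t,{}^1\bar x,\alpha}$ agree on $\sigma({}^2\mathbf x)$. It therefore suffices to show that, under any solution, ${}^1\mathbf x_{\cdot\wedge s}$ is almost surely equal to a fixed measurable functional $\Phi_s({}^2\mathbf x)$ that does not depend on the solution chosen.

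\noindent To produce $\Phi_s$ I would read off the local structure of $\bar L^\alpha$. The branching times of the labels below any relevant label are, by definition, hitting times of jumps of the corresponding Poisson measures, hence $\sigma({}^2\mathbf x)$–measurable, and so are the offspring counts; by Proposition \ref{well-pose-branch-pop} only finitely many branchings occur on $[t,s]$ almost surely, each creating finitely many labels. Between two consecutive branching times, the coefficients of $\partial_1 F$, $\partial_{11}F$ and of the cross term $\partial_{12}F$ in $\bar L^\alpha$ state exactly that the positions of the (finitely many) alive particles satisfy, under $\bar\P$, the closed Lipschitz stochastic differential system of Remark \ref{rem-extention} driven by the Brownian components of ${}^2\mathbf x$ — the $\partial_{12}F$ term being what forces the driving martingale of ${}^1\mathbf x$ to coincide with the Brownian part of ${}^2\mathbf x$ itself rather than an independent copy — while at a branching time the new configuration is the deterministic image of the current configuration and the Poisson jump. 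Pathwise uniqueness for Lipschitz stochastic differential equations (Assumption \ref{Assumption_H_0}(ii)-(iii)) then determines the alive positions on each sub-interval as a measurable functional of ${}^2\mathbf x$ and of the configuration at the previous branching time; iterating over the finitely many branchings on $[t,s]$, starting from the pinned initial configuration ${}^1\bar x_t$, yields ${}^1\mathbf x_{\cdot\wedge s}=\Phi_s({}^2\mathbf x)$ $\bar\P$–a.s. with $\Phi_s$ independent of the solution. Consequently $\E_{\bar\P_i^{t,{}^1\bar x,\alpha}}[h(\mathfrak{x}_s)]=\E_{\bar\P_i^{t,{}^1\bar x,\alpha}}\big[h\big(s,\Phi_s({}^2\mathbf x)_{\cdot\wedge s},{}^2\mathbf x_{\cdot\wedge s}\big)\big]$ depends only on the common law of ${}^2\mathbf x$, which gives \eqref{eq1dim-marg}.

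\noindent The main obstacle is the rigorous execution of this reconstruction step under an \emph{arbitrary} solution: a priori a solution of the martingale problem could carry extra randomness and decouple ${}^1\mathbf x$ from ${}^2\mathbf x$, so the identification with the Lipschitz SDE is not immediate. I expect to rule this out by localizing at the successive branching stopping times and applying a regular–conditional–probability argument in the spirit of Theorem 6.1.3 of \cite{SV97} (exactly as in the proof of Theorem \ref{IdCondProbCanSpace}), so that, conditionally on $\Gc$ at a branching time, the solution restarts as a solution of the shifted martingale problem on an interval with no branching, where the identification with the Lipschitz SDE and with its pathwise–unique strong solution is genuinely classical; the delicate part is the bookkeeping that keeps track, along the induction on the countable label set, that only finitely many labels are active and that the cross–variation terms pin the Brownian increments of ${}^1\mathbf x$ to those of the noise. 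For the shifted martingale problem \eqref{eq1dim-marg-shift}, the same steps apply verbatim, the pinned path ${}^2\bar x$ on $[0,t]$ entering only through the (still Lipschitz in the state) path dependence of the coefficients via $\alpha$.
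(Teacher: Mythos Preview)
Your approach is correct but takes a genuinely different route from the paper. The paper's argument is a short product-space duality trick: one forms the product measure $\bar\P_1\otimes\bar\P_2$ on $\mathbf X\times\mathbf X$, notes that for each $h\in\Dc$ both $s\mapsto h(\mathfrak x^1_s)h(\mathfrak x^2_t)-\int_t^s \mathbf L^{t,\alpha}h(\mathfrak x^1_u)h(\mathfrak x^2_t)\,du$ and its symmetric counterpart are martingales, and takes expectations to obtain $\E^{\bar\P_1}[h(\mathfrak x_s)]\,\E^{\bar\P_2}[h(\mathfrak x_t)]=\E^{\bar\P_1}[h(\mathfrak x_t)]\,\E^{\bar\P_2}[h(\mathfrak x_s)]$; since the two solutions share the same law at time $t$, this gives equality of one-dimensional marginals once $\Dc$ is shown to be measure-determining. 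No structural information about the SDE is used beyond the bare martingale property.

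Your argument, by contrast, amounts to proving that any solution of the martingale problem is actually a \emph{strong} solution of the underlying branching SDE: you first pin down the law of the noise component ${}^2\mathbf x$, then reconstruct ${}^1\mathbf x$ pathwise from it using the Lipschitz structure of Assumption~\ref{Assumption_H_0} and an induction over the (a.s.\ finitely many) branching times, with the cross-variation term forcing the martingale part of ${}^1\mathbf x$ to be the stochastic integral against the Brownian coordinates of ${}^2\mathbf x$. This is considerably heavier --- it requires exactly the conditioning and bookkeeping you flagged as the main obstacle --- but it delivers more, namely a Yamada--Watanabe-type representation ${}^1\mathbf x=\Phi({}^2\mathbf x)$. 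The paper's route sidesteps all of this by never decoupling the two components and working purely at the level of expectations of test functions in $\Dc$; in return it gives only equality of one-dimensional marginals, which is precisely what the lemma states and what is then fed into Theorem~4.2 of \cite{EthierKurtz} to conclude full uniqueness.
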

\begin{proof} We first endow the measurable space $(\mathbf{X}\times  \mathbf{X}, \Gc_T\otimes \Gc_T)$ with the probability measure  $\bar \P=\bar \P_{1}^{t,{}^1\bar x,\alpha}\otimes\bar \P_{2}^{t,{}^1\bar x,\alpha}$ (resp. $\bar \P=\bar \P_{1}^{t,\bar x,\alpha}\otimes\bar \P_{2}^{t,\bar x,\alpha}$).  For $h\in \Dc$,
 we have
\beqs
\E^{\bar \P}\left[ h\otimes h(\mathfrak{x}_s,\mathfrak{x}_t) \right] & = & \E^{\bar \P}\left[ h\otimes h(\mathfrak{x}_t,\mathfrak{x}_s) \right]
\enqs
Indeed, the processes
\beqs
h\otimes h(\mathfrak{x}_s,\mathfrak{x}_t)-\int_t^s \mathbf{L}^{t,\alpha} h(\mathfrak{x}_u)h(\mathfrak{x}_t)du\;,\quad t\leq s \leq T
\enqs
and
\beqs
h\otimes h(\mathfrak{x}_t,\mathfrak{x}_s)-\int_t^s h(\mathfrak{x}_t)\mathbf{L}^{t,\alpha} h(\mathfrak{x}_u)du\;,\quad t\leq s \leq T
\enqs
are both martingales under $\bar \P$. Since all the considered functions are bounded, we can take the expectation and we get
\beqs
\E^\P\left[h\otimes h(\mathfrak{x}_t,\mathfrak{x}_s)\right] & = & \E^\P\left[h\otimes h(\mathfrak{x}_s,\mathfrak{x}_t)\right]
\enqs
and
\beqs
\E^{\bar \P_{1}^{t,{}^1\bar x,\alpha}} \left[h(\mathfrak{x}_s)\right] & = & \E^{\bar \P_{2}^{t,{}^1\bar x,\alpha}} \left[h(\mathfrak{x}_s)\right]\\
(resp. ~\E^{\bar \P_{1}^{t,{}\bar x,\alpha}} \left[h(\mathfrak{x}_s)\right] & = & \E^{\bar \P_{2}^{t,{}\bar x,\alpha}} \left[h(\mathfrak{x}_s)\right]~)\;.
\enqs
Since any bounded $\Bc(\mathfrak{X})$-measurable function can be approximated almost everywhere for $\bar \P_{1}^{t,{}^1\bar x,\alpha}$ and $\bar \P_{2}^{t,{}^1\bar x,\alpha}$ (resp. $\bar \P_{1}^{t,{}\bar x,\alpha}$ and $\bar \P_{2}^{t,{}\bar x,\alpha}$) by a sequence of $\Dc$ we get \eqref{eq1dim-marg} (resp. \eqref{eq1dim-marg-shift}).
\end{proof}
\begin{proof}[Proof of Theorem \ref{uniqueness}]
The proof is a direct consequence of Theorem 4.2 in \cite{EthierKurtz} and Lemma \ref{lem1marg}.
\end{proof}
\subsection{Proof of Theorem \ref{condTHM}}\label{proof-condtioning}

We keep the notations of Section \ref{sec-mart-cont-pb}. Fix $(t,\hat \mu,\alpha)\in [0,T]\times E_{d+1}\times\Ac$.
From Proposition \ref{well-pose-branch-pop}, the law $\Lc^\P(\hat Z^{t,\hat \mu,\alpha},\xi)$ under $\P$ of $(\hat Z^{t,\hat \mu,\alpha},\xi)$ provides a solution to the controlled martingale problem with initial condition $(t,\bar x)$,  where $\bar x\in \mathbf{X}$ such that ${}^1\bar x_s=\hat \mu$ for $s\in[0,T]$, and control $\alpha$ given by Definition \ref{defMP}. Therefore, we get from Theorem \ref{uniqueness}
 \beq\label{identif-law-1}
 \Lc^\P(\hat Z^{t,\hat \mu,\alpha},\xi) & = & \bar \P ^{t,{}^1\bar x,\alpha}
 \enq
 In the same way, for $\beta\in \Ac$, $\bar \omega\in\Omega$ such that $\xi(\bar\omega)={}^2\bar x$, the law $\Lc^\P(\hat Z^{t,\hat \mu,\beta},\xi(\bar\omega\oplus_t\cdot) )$ under $\P$ of $(\hat Z^{t,\hat \mu,\beta},\xi(\bar\omega\oplus_t\cdot))$ is the unique solution to the shifted controlled martingale problem with initial condition $(t,\bar x)$ and control $\beta$ given by Definition \ref{Shifted martingale problem}. Therefore, we also get from Theorem \ref{uniqueness} that
 \beq\label{identif-law-2}
 \Lc^\P(\hat Z^{t,\hat \mu,\beta}, \xi(\bar\omega\oplus_t\cdot)) & = & \bar \P ^{t,\bar x,\beta}\;.
 \enq
 Fix now  an $\Fc_\tau$-measurable random variable $Y$.
 From Doob's functional representation Theorem (see Lemma 1.13 in \cite{book:KALLENBERG-FMP}) there exists a random time $\tilde \tau:~\D([0,T],\tilde E_{m+1})\rightarrow\R_+$ that is a stopping time with respect to the filtration generated by the canonical process on $\D([0,T],\tilde E_{m+1})$, and a measurable function $g_Y:~\D([0,T],\tilde E_{m+1})\rightarrow\R$ such that
 \beqs
 \tau(\omega) ~ = ~ \tilde \tau \left(\xi(\omega)\right) & \mbox{ and } & Y(\omega)~=~g_Y\left(\xi(\omega_{\tau(\omega)\wedge.}) \right)~=~g_Y\left(\xi(\omega)\right)\;.
 \enqs
We then define $\bar \tau :~\mathbf{X}\rightarrow\R_+$ by  $\bar \tau =\tilde \tau\circ {}^2\mathbf{x}$ where we recall that ${}^1\mathbf{x}$ and ${}^2\mathbf{x}$ are given by \eqref{def-projX}.
We observe that $\bar \tau$ is a $\G$-stopping time and $g_Y\circ{}^2\mathbf{x}$ is $\Gc_{\bar \tau}$-measurable. We therefore have from \eqref{identif-law-1}
 \beqs
 \E\left[ f\left(\hat Z^{t,\hat \mu,\alpha}\right)Y \right] & = &  \E^{\bar \P ^{t,{}^1\bar x,\alpha}}\left[ f\left({}^1\mathbf{x}\right)g_Y({}^2\mathbf{x}) \right]\\
  & = & \int_{\mathbf{X}}\E^{\bar \P ^{t,{}^1\bar x,\alpha}_{\bar x'}}\left[ f\left({}^1\mathbf{x}\right)\right]  g_Y({}^2\mathbf{x}(\bar x')) d{\bar \P ^{t,{}^1\bar x,\alpha}}(\bar x')\;.
 \enqs
 where $(\bar \P_{\bar x'}^{t,\bar x,\alpha},~\bar x'\in \mathbf{X})$ stands for a \emph{regular conditional probability distribution} of $\bar \P_{}^{t,\bar x,\alpha}$ given $\Gc_{\bar\tau}$. Using Theorem \ref{IdCondProbCanSpace} we finally get
 \beqs
 \E\left[ f\left(\hat Z^{t,\hat \mu,\alpha}\right)Y \right] & = &  \int_{\mathbf{X}}\E^{\P^{\bar \tau{(\bar x')},\bar x',\alpha^{\bar\tau(\bar x'),{}^2\bar x'}}}\left[ f\left({}^1\mathbf{x}\right)\right]  g_Y({}^2\mathbf{x}(\bar x')) d{\bar \P ^{t,{}^1\bar x,\alpha}}(\bar x')\\
  & = & \int_{\mathbf{X}}F\left( \bar \tau{(\bar x')},\bar x',\alpha^{\bar \tau(\bar x'),{}^2\bar x'} \right)
   g_Y({}^2\mathbf{x}(\bar x')) d{\bar \P ^{t,{}^1\bar x,\alpha}}(\bar x')\;.
   \enqs
   Then, using \eqref{identif-law-2}, we get
   \beqs
 \E\left[ f\left(\hat Z^{t,\hat \mu,\alpha}\right)Y \right]    & = & \int_\Omega F\left( \tau(\omega),\hat Z^{t,\hat \mu,\alpha}_{.\wedge \tau}(\omega),\alpha^{\tau(\omega),\omega} \right)Y(\omega)d\P(\omega)\;.
 \enqs
 \bibliographystyle{plain}
\bibliography{main}

\end{document}